\documentclass[10pt]{article}

\usepackage{amssymb,amsmath,amsthm,mathrsfs,amsfonts,graphicx,epsf}
\usepackage{color}
\usepackage{xcolor}

\usepackage[hypertex]{hyperref} % liens en dvi avec Latex
%\usepackage{hyperref}            % liens en pdf avec pdfLatex (pour les Mac)

%%%GRAFI
% \input xy
 %\xyoption{all}
 %\newcommand{\ver}[1    ]{*+<2mm>[F-:<3pt>]{#1}}

 \usepackage{esint}
 \usepackage{fullpage}

%%%%%%%%%%%%%%%%%%%%%%%%%%%%%%% mobili

%\newcommand{}[1]{\textcolor{blue}{#1}}

\newcommand{\Reg}{\mathsf{R}}

\newcommand{\Sing}{\mathsf{S}}

\newcommand{\ord}{\mathrm{ord}}

\newcommand{\cH}{{\cal H}}

\newcommand{\cI}{{\cal I}}
\newcommand{\cF}{{\cal F}}
\newcommand{\cP}{{\cal P}}
\newcommand{\codim}{\mathrm{codim}}

\newcommand{\bbrr}{B_{\widehat d_p}}

\newcommand{\Leb}[1]{{\mathscr L}^{#1}}

\newcommand{\volh}{\mathrm{vol}_H}
\newcommand{\vol}{\mathrm{vol}}

%%%%%%%%%%%%%%%%%%%%%%%%%%%%%%%
\newcommand{\bD}{{\cal D}}
\renewcommand{\wp}{\hat \varpi^p}
\newcommand{\mup}{\hat \mu^p}
\newcommand{\muq}{\hat \mu^q}

\newcommand{\reg}{\Reg}

\newcommand{\cL}{\mathcal{L}}

\newcommand{\cX}{{\cal X}}
\newcommand{\cY}{{\cal Y}}

\newcommand{\ee}{{e}}

\newcommand{\ps}{\langle\cdot,\cdot\rangle}
\renewcommand{\span}{\mathrm{span}}

%%%%%%%%%%%%%%%%%%%%%%%%%%%%%%%%

\newtheorem{theorem}{Theorem}[section]
\newtheorem{corollary}[theorem]{Corollary}
\newtheorem{lemma}[theorem]{Lemma}
\newtheorem{proposition}[theorem]{Proposition}

\theoremstyle{definition}
\newtheorem{definition}[theorem]{Definition}

\theoremstyle{remark}
\newtheorem{remark}[theorem]{Remark}
\newtheorem{example}[theorem]{Example}
%%%%%%%%%%%%%%%%%%%%%%%%%%%%%%%%%%
\newcommand{\bt}{\begin{theorem}}
\newcommand{\et}{\end{theorem}}
\newcommand{\bl}{\begin{lemma}}
\newcommand{\el}{\end{lemma}}
\newcommand{\bp}{\begin{proposition}}
\newcommand{\ep}{\end{proposition}}
\newcommand{\bc}{\begin{corollary}}
\newcommand{\ec}{\end{corollary}}
\newcommand{\bdeff}{\begin{definition}}
\newcommand{\edeff}{\end{definition}}
\newcommand{\brem}{\begin{remark}}
\newcommand{\erem}{\end{remark}}
%%%%%%%%%%%%%%%%%%%%%%%%%%%%%%%%%%
\renewcommand{\r}[1]{(\ref{#1})}
\newcommand{\con}{{\mathcal C}}

\def\diam{\mathop{\mathrm{diam}}}
\def\ss{\mathcal{S}}
\newcommand{\hh}{{\mathcal H}}

\newcommand{\qn}{{Q_N}}

%%%%%%%%%%%%%%%%%%%%%%%%%%%%%%%%%%
\newcommand{\bi}{\begin{itemize}}
\newcommand{\iii}{\item}
\newcommand{\ei}{\end{itemize}}
\newcommand{\bd}{\begin{description}}
\newcommand{\ed}{\end{description}}

\newcommand{\bqn}{\begin{eqnarray}}
\newcommand{\eqn}{\end{eqnarray}}
\newcommand{\eqnn}{\nonumber\end{eqnarray}}

\newcommand{\ba}[1]{\begin{array}{#1}}
\newcommand{\ea}{\end{array}}

\newcommand{\R}{\mathbb{R}}

\newcommand{\N}{\mathbb{N}}

%%%%%%%%%%%%%%%%%%%%%%%%%%%%%%%%%%%
%%%%%%%%%%%%%%%%%%%%%%%%%%%%%%%%%%%
\newcommand{\g}{\gamma}

\newcommand{\eps}{\epsilon}

%%%%%%%%%%%%%%%%%%%%%%%%%%%%%%%%%%%

\newcommand{\dSdm}{\frac{d\ss^{Q_\reg}\llcorner_\Reg}{d\mu}}
\newcommand{\VecM}{\mathrm{Vec}(M)}

%%%%%%%%%%%%%%%%%%%%%%%%%%%%%%%%%%%

%%%metric differentiability

\title{Hausdorff volume in non equiregular sub-Riemannian manifolds}

\author{
 R.~Ghezzi\thanks{Institut de Math\'ematiques de Bourgogne UBFC, 9 Avenue Alain Savary BP47870 21078 Dijon Cedex France     {\tt roberta.ghezzi@u-bourgogne.fr}},
   F.~Jean\thanks{828 Boulevard des Mar\'{e}chaux
91762 Palaiseau, France,  and
  Team  GECO, INRIA Saclay --
\^{I}le-de-France, {\tt frederic.jean@ensta-paristech.fr}}
\thanks{This work was partially supported by the European project AdG ERC ``GeMeThNES'', grant agreement number 246923  (see also {\tt gemethnes.sns.it}); by iCODE (Institute for Control and Decision),
research project of the IDEX Paris- Saclay; and by the Commission
of the European Communities under the 7th Framework Programme Marie
Curie Initial Training Network (FP7-PEOPLE-2010-ITN), project SADCO,
contract number 264735.}
}

%%%%%%%%%%%%%%%%%%%%%%%%%%%%%%%%%%%%%%%%%%%%%%%%%%%%%%%%%%%%%%%%%%%%%%%%%%%%%%%%%%%%%%%%%

\begin{document}

\maketitle

\begin{abstract}
In this paper we study the Hausdorff volume in a non equiregular sub-Riemannian manifold and we compare it with a smooth volume. We first give the Lebesgue decomposition of the Hausdorff volume. Then we study the regular part, show that it is not commensurable with the smooth volume, and give conditions under which it is a Radon measure. We finally give a complete characterization of the singular part. We illustrate our results and techniques on numerous examples and cases (e.g. to generic sub-Riemannian structures).
\end{abstract}

\tableofcontents

\section{Introduction}

The present work is motivated by the analysis of intrinsic volumes in sub-Riemannian geometry. Here   a sub-Riemannian manifold is a triplet
 $(M,\bD,g)$, where  $M$ is a
smooth manifold, $\bD$ a Lie-bracket generating distribution on $M$  and $g$ a   Riemannian metric on $\bD$ (note that our framework  will permit us to consider rank-varying distributions as well). As in Riemannian geometry,  one defines the length of
  absolutely continuous paths which are
almost everywhere tangent to $\bD$   by integrating the $g$-norm of
their tangent vectors.  Then, the sub-Riemannian
distance $d$ is defined as the infimum of length of paths between two given points. Since $\bD$ is Lie-bracket generating, for every point $p\in M$ there exists $r(p)\in\N$ such that
\begin{equation}\label{flagintro}
\{0\}=\bD^0_p\subset\bD_p^1\subset\dots\subset\bD_p^{r(p)}=T_pM,
\end{equation}
where $\bD^i_p=\{X(p)\mid X\in\bD^i\}$ and $\bD^i\subset\VecM$ is the submodule defined recursively  by
  $
\bD^{1}=\bD$, $\bD^{i+1}=\bD^{i}+[\bD,\bD^{i}]
$. The sub-Riemannian manifold is equiregular if the dimensions $\dim\bD^i_p$ do not depend on $p$.

%
%A point $p$ is regular  if, for every $i$, the dimensions $\dim\bD^i_q$ are constant as $q$ varies in a neighborhood of $p$. Otherwise, $p$ is said to be singular. A set  $S\subset M$ is equiregular if, for every $i$, $\dim\bD^i_q$ is constant as $q$ varies in $S$.

Intrinsic measures on  sub-Riemannian manifolds are those which are associated with the sub-Riemannian structure.
There are essentially two ways to build such measures: either using the metric structure defined by the sub-Riemannian distance which provides Hausdorff and spherical Hausdorff measures, or by means of the algebraic structure associated with the distribution which allows to construct the so called Popp's measure (see \cite{montgomery}) on equiregular manifolds.

Intrinsic measures have been widely studied in the equiregular case, where the algebraic structure is well understood (in Carnot groups \cite{tyson,bonfi,FSSC} and on equiregular manifolds \cite{gz06}).
The relevance of the study of intrinsic volumes, e.g., top-dimensional Hausdorff measures, is due to their use in PDE's analysis. For instance, to generalize the Laplace-Beltrami operator in sub-Riemannian geometry, one needs a (sufficiently smooth) intrinsic volume: this motivates the analysis of regularity of the Hausdorff volume in   \cite{balu}.  We also mention the recent work \cite{capognaledonne} where smoothness of intrinsic volumes is needed to apply some nice PDE's proof ideas.
 In the non equiregular case, no study of intrinsic volumes exists so far.    We refer the reader to \cite{gro96} for a survey of many facts and interesting questions.   Hausdorff measures are also studied in \cite{noi,cocv_fj}  along curves.

The main aim of this paper is the analysis of the Hausdorff volume in non equiregular sub-Riemannian manifolds.

For convenience, let us first recall the equiregular case. Let $(M,\bD,g)$ be an equiregular sub-Riemannian manifold. The Hausdorff dimension of $M$ can be algebraically computed in terms  of the flag \r{flagintro} of the distribution
 by $\dim_HM=Q$, where
 $$
Q=\sum_{i=1}^{r}i(\dim\bD^i_p-\dim\bD^{i-1}_p),
 $$
see \cite{mitchell}. The Hausdorff volume, denoted by $\volh$, is by definition the top-dimensional spherical Hausdorff measure $\ss^{\dim_HM}$.  Assume $M$ to be oriented. A natural way to understand the behavior of $\volh$ is to compare it with a smooth volume $\mu$  on $M$, i.e., a measure defined on open sets by $\mu(A)=\int_A \omega$, where $\omega\in\Lambda^nM$ is a positively oriented non degenerate $n$-form. The equiregular assumption implies that $\volh$ and any smooth volume $\mu$ are mutually absolutely continuous. Moreover the Radon-Nikodym derivative of $\volh$ with respect to $\mu$ at a point $p$, denoted by $\frac{d\volh}{d\mu}(p)$,  can be computed explicitly by the formula (see \cite{balu})
\begin{equation}\label{densintro}
\lim_{r\to 0}\frac{\volh(B(p,r))}{\mu(B(p,r))}=\frac{2^Q}{\mup(\widehat B_p)},
\end{equation}
where $B(p,r)$ is the sub-Riemannian ball centered at $p$ of radius $r$, $\widehat B_p$ is the unit ball in the nilpotent approximation at $p$ and $\mup$ is a measure obtained through a blow-up procedure of $\mu$ at $p$. As a consequence, $\frac{d\volh}{d\mu}$ is continuous on $M$ and hence locally bounded and locally bounded away from zero on $M$. With the language of \cite{mitchell, montgomery}, this implies that $\volh$ and $\mu$ are commensurable and, in particular, that $\volh$ is a Radon measure, i.e., $\volh(K)<\infty$ for every compact set $K$.  Therefore, when the manifold is equiregular, $\volh$ is well understood in the sense that it behaves essentially as a smooth volume. Nevertheless, further regularity of $\frac{d\volh}{d\mu}$ is not granted see \cite{balu}.

In this paper we study the Hausdorff volume in  a non equiregular sub-Riemannian manifold. A point $p$ is called regular if the growth vector $q\mapsto(\dim\bD^1_q,\dots, \dim\bD^{r(q)}_q)$ is  constant in a neighborhood of $p$, otherwise $p$ is called singular. The natural assumption under which we perform our work is that the manifold is stratified by equisingular submanifolds, where  both the growth vector of the distribution and the growth vector of the distribution restricted to the submanifold are  constant. These submanifolds were introduced in \cite{gro96} and, thanks to their simple Lie algebraic structure, they constitute the fundamental block that allows us to carry through our investigation.

When the set $\Sing$ of  singular points is not empty, the Hausdorff dimension of $M$ is obviously the maximum between the Hausdorff dimension of $\Sing$ and the Hausdorff dimension of the set $\Reg=M\setminus\Sing$ of regular points.

The first question is whether $\volh$ is absolutely continuous with respect to a smooth volume $\mu$.
It turns out that this may not be the case and   $\volh$   may have a singular part.
More precisely, under the assumption that   $\Sing$   is $\mu$-negligible, then $\volh$ admits the following Lebesgue decomposition
$$
\volh=\volh\llcorner_{\Reg}+\volh\llcorner_{\Sing},
$$
that is, $\volh\llcorner_{\Reg}\ll\mu$ and  $\volh\llcorner_{\Sing}\perp\mu$  (see Corollary~\ref{jkl}). As a consequence, $\volh$ is absolutely continuous with respect to a smooth volume $\mu$ if and only if the Hausdorff dimension of the singular set is smaller than the Hausdorff dimension of the set of regular points.

The next problem is to analyze the behavior of the absolutely continuous part $\volh\llcorner_\Reg$ (which is nontrivial if and only if $\dim_H\Reg\geq\dim_H\Sing$). Thanks to \r{densintro}, this amounts to study the function $q\mapsto \muq(\widehat B_q)$ near singular points.
 To this aim, we show that the asymptotics of this function is characterized by the one of  the determinants of   adapted bases at regular points (see Proposition~\ref{we}). As a direct consequence, we deduce that  $\frac{d\volh\llcorner_\Reg}{d\mu}(q)$ blows up when  $q$ approaches the singular set   (see Proposition~\ref{bdd}).
% \red{This comes from the fact that $\frac{d\volh\llcorner_\Reg}{d\mu}$ behaves (up to multiplicative constant) as  the reciprocal of the determinant of a basis adapted to the flag at regular points)}
 In particular,  $\frac{d\volh\llcorner_\Reg}{d\mu}$ is not essentially bounded near the singular set, that is,  unlike the equiregular case, $\mu$ and $\volh\llcorner_{\Reg}$ are no longer  commensurable.
Going further in the regularity analysis, we find out that  $\volh\llcorner_{\Reg}$ may even fail to be  locally integrable with respect to $\mu$ and therefore $\volh\llcorner_{\Reg}$ may fail to be a Radon measure.
More precisely, we exhibit a sufficient condition involving the nonholonomic order of $\frac{d\volh\llcorner_\Reg}{d\mu}$  and the algebraic structure of the distribution at a singular point for non-integrability of $\volh\llcorner_\Reg$ (see Proposition~\ref{fin}), and another one involving the usual order of functions and the codimension of the singular set (see Proposition~\ref{le:domega=0}). For instance,  these conditions are satisfied when the Hausdorff dimension of $\Reg$ is not greater  than   the Hausdorff dimension of $\Sing$ or when $\Sing$ topologically  splits $M$  (see Corollaries \ref{th:s>r} and \ref{tre}). We also exhibit a sufficient condition for the integrability of $\volh\llcorner_{\Reg}$ (see Proposition \ref{finito}), but there is a gap between integrability and non-integrability conditions.
In Figure~\ref{sumup}
 we summarize the relations between the Hausdorff volume and $\mu$ in all cases.

\begin{figure}[h!]
\begin{center}
\input{sumup_fred.pstex_t}

\caption{Summary of relations between $\volh$ and a smooth volume $\mu$ on an oriented stratified sub-Riemannian manifold ($Q_\Sing, Q_\Reg$ denote the Hausdorff dimensions of $\Sing$ and $\Reg$, respectively)}\label{sumup}
\end{center}
\end{figure}

For generic sub-Riemannian manifolds, we apply our technique  and  we characterize the integrability of the absolutely continuous part by  comparing the dimension of the manifold to dimensions of free Lie algebras (see Proposition~\ref{gensmo}).

As for the singular part $\volh\llcorner_{\Sing}$,  the stratification assumption permits to focus on each equisingular submanifold $N\subset \Sing$. For an equisingular submanifold $N\subset \Sing$, we give an algebraic characterization of the Hausdorff dimension of $N$ and we compare the Hausdorff volume with a smooth volume on the submanifold (see Theorem~\ref{mainth}). We show actually that on the restricted metric space $(N, d|_N)$ the situation is very similar to the one in equiregular manifolds: the Hausdorff volume is absolutely continuous  with respect to any smooth volume on $N$ and we have an expression of the Radon-Nikodym derivative in terms of the nilpotent approximation. Results of this part of the paper have been announced in \cite{noi2}. %\red{mettre une phrase ici sur les marches aleatoires de Ugo???}

The structure of the paper is the following. In Section~\ref{definizioni} we briefly recall Hausdorff measures on metric spaces and the basic concepts in sub-Riemannian geometry. Section~\ref{lebdec} is devoted to the analysis of the Hausdorff volume in the equiregular case and of  the Lebesgue decomposition of the Hausdorff volume in the non equiregular case.  In Section~\ref{integra} we perform the study of regularity of the Radon-Nikodym derivative and we provide  sufficient
 conditions for integrability and non-integrability. Then, Section~\ref{seqsm} deals with equisingular submanifolds by providing first a complete study of the algebraic and metric structure of such submanifolds, and then an analysis of the properties of the Hausdorff volume.
Finally, in Section~\ref{appli} we apply the methods of Section~\ref{integra} to the study of the generic smooth case and we list some examples. We end with an appendix containing the proof of a technical result, Proposition~\ref{le:unifbbox} which is a uniform Ball-Box Theorem on equisingular submanifolds and which is a key ingredient  to study the behavior of $\frac{d\volh\llcorner_\Reg}{d\mu}$ at singular points.

\section{Definitions}\label{definizioni}

\subsection{Hausdorff and smooth volumes}\label{hhss}

Let us first recall some basic facts on Hausdorff measures. Let $(M,d)$ be
a metric space. We denote by $\diam S$ the diameter of a set $S
\subset M$. %, by  $B(p,\rho)$ the open ball $\{q\in M\mid d(q,p)<\rho\}$, and by $\overline{B(p,\rho)}$ the closure of $B(p,\rho)$.
Let $\alpha \geq 0$ be a real number. For every set $E \subset M$, the
\emph{$\alpha$-dimensional Hausdorff measure} $\hh^\alpha$ of $E$  is
defined as  $\hh^\alpha(E)
= \lim_{\eps \to 0^+} \hh^\alpha_\eps(E)$, where
$$
\hh^\alpha_\eps(E) = \inf \left\{ \sum_{i=1}^\infty  \left(\diam S_i\right)^\alpha
\, : \, E \subset \bigcup_{i=1}^\infty S_i, \ S_i \hbox{ nonempty set} ,
\ \diam S_i \leq \eps \right\},
$$
and the \emph{$\alpha$-dimensional spherical Hausdorff measure} is
defined as $\ss^\alpha(E)
= \lim_{\eps \to 0^+} \ss^\alpha_\eps(E)$, where
$$
\ss^\alpha_\eps(E) = \inf \left\{ \sum_{i=1}^\infty  \left(\diam
S_i\right)^\alpha \, : \, E \subset \bigcup_{i=1}^\infty S_i, \ S_i \hbox{ is
  a ball}, \ \diam
S_i \leq \eps  \right\}.
$$
For every set $E\subset M$, the non-negative number
$$
D=\sup\{\alpha\geq0\mid \hh^\alpha(E)=\infty\}=\inf\{\alpha\geq0\mid \hh^\alpha(E)=0\}
$$
is called the {\it Hausdorff dimension of $E$}. Notice that $\hh^D(E)$
may be $0$, $>0$, or $\infty$.
By construction, for every subset $S \subset M$,
\begin{equation}\label{SHS}
\hh^\alpha(S)\leq\ss^\alpha(S)\leq 2^\alpha\hh^\alpha(S),
\end{equation}
hence the Hausdorff dimension can be defined equivalently using spherical measures.
In the sequel we will call {\it Hausdorff volume} the spherical Hausdorff measure ${\cal S}^{\dim_H M}$ and we will denote this measure by $\volh$.

Given a subset $N\subset M$, we  consider the metric space $(N,d|_N)$. Denoting by $\hh^\alpha_N$ and $\ss^\alpha_N$ the Hausdorff and spherical Hausdorff measures in this space, by definition we have
\begin{eqnarray}
\hh^\alpha\llcorner_N(E)&:=&\hh^\alpha(E\cap N)=\hh_N^\alpha(E\cap N),\nonumber \\
\ss^\alpha\llcorner_N(E)&:=&\ss^\alpha(E\cap N)\leq\ss_N^\alpha(E\cap N)\label{eqes}.
\end{eqnarray}
%These are a simple consequences of the fact that a  set $C$ is closed in $N$ if and only if $C=C'\cap N$, with $C'$ closed in $M$.
 Notice that the inequality \r{eqes} is strict in general, as coverings in the definition of $\ss^\alpha_N$ are made with sets $B$ which satisfy $B=B(p,\rho)\cap N$ with $p\in N$, whereas coverings in the definition of $\ss^\alpha\llcorner_N$ include sets of the type  $B(p,\rho)\cap N$ with $p\notin N$  ($\ss^\alpha\llcorner_N$ is very similar to centered Hausdorff measures, see~\cite{edgar07}).   Moreover, thanks to \r{SHS}, for every $S\subset N$, there holds $\hh_N^\alpha(S)\leq\ss_N^\alpha(S)\leq 2^\alpha\hh_N^\alpha(S)$.
Hence
$$
\hh^\alpha(S)\leq\ss_N^\alpha(S)\leq 2^\alpha\hh^\alpha(S),
$$
and $\ss^\alpha_N$ is absolutely continuous with respect to $\hh^\alpha\llcorner_N$.
%\red{where do we use this? maybe in the proof of the result on the density. Maybe we will recall this property at that point}
We will denote by $\volh^N$ the Hausdorff volume $\ss_N^{\dim_HN}$.\medskip

When $M$ is an oriented  manifold we can introduce another kind of volume. We say that a measure $\mu$ is a
 {\it smooth volume} on $M$ if there exists a positively-oriented non degenerate $n$-form $\omega\in\Lambda^nM$ on $M$ (i.e. $\omega$ is a volume form) such that, for every Borel set $E\subset M$, $\mu(E)=\int_E \omega$.

Finally we will say that two measures $\mu, \nu$ on $M$ are {\it commensurable}
if $\mu, \nu$  are mutually absolutely continuous,  i.e., $\mu \ll \nu$ and $\nu\ll\mu$, and if both Radon-Nikodym derivatives $\frac{d \mu}{d \nu}$ and $\frac{d \nu}{d \mu}$ exist and are locally essentially bounded.
When it is the case, for every compact set $K$ there exists $C>0$ such that
$$
\frac1C\mu\llcorner_K \leq \nu\llcorner_K \leq C\mu\llcorner_K.
$$
%
%We will say that two measures $\mu, \nu$ on $M$ are {\it commensurable}
%if for every compact set $K$ there exist $C>0$ such that
%$$
%\frac1C\mu\llcorner_K \leq \nu\llcorner_K \leq C\mu\llcorner_K.
%$$
%When $\mu=\rho \nu$, for some measurable non negative density $\rho:M\to\R$, this is equivalent to $\rho$ locally bounded and locally bounded away from zero.  On the other hand, when    $\mu, \nu$  are mutually absolutely continuous,  i.e., $\mu\ll\nu$ and $\nu\ll\mu$, there need not be a density of $\mu$ with respect to $\nu$, as Radon-Nikodym theorem holds for $\sigma$-finite (or Radon) measures.
%
In particular, thanks to  \r{SHS},  $\hh^\alpha$ and $\ss^\alpha$ are mutually absolutely continuous and commensurable.

\subsection{Sub-Riemannian manifolds}
\label{se:srman}

%
%In letteratura ci sono vari modi di definire una struttura sub-riemanniana, propri di differenti contesti (sistemi non holonomi [cit montgomery], operatori ippoellittici [cita hormander, carnot groups], controllo geometrico [bellaiche, ABB]). Qui di seguito, richiamiamo la definizione data per primo in bellaiche and sistematically used in [ABB, noi].
%Usually two ways
%\bi
%\iii $(D,g)$ where $D$ subbundle of $TM$ and $g$ Riemannian metric on $D$;
%\iii $(X_1,\dots, X_m)$
%\ei
%These cases are included in the more general definition
%
%
%In all cases two objects appear : a module $\bD$ and a quadratic form $G$.
%We stress in what follows this fact by refering to a sub-Riemannian manifold as a triplet $(M,\bD,g)$ where $\bD$ is the module.....and $G$ is.....
%Notice that it is really the module the object to study!
%
%//////////
%

Usually, a sub-Riemannian manifold
  is a triplet  $(M,\Delta,g^R)$, where $M$ is a smooth (i.e., ${\cal C}^\infty$) manifold,    $\Delta$
is a
subbundle of $TM$ of rank $m<\dim
M$ and $g^R$ is a Riemannian metric  on $\Delta$.
Using $g^R$, the length of  horizontal curves, i.e.,
absolutely continuous curves which are almost everywhere tangent to
$\Delta$, is well-defined.
 When $\Delta$ is Lie bracket generating, the map $d:M\times M\to\R$  defined as the infimum of length of horizontal
curves  between two given points is a continuous distance
(Rashevsky-Chow Theorem), and it is called  sub-Riemannian
  distance.
%  The set $M$ endowed with the sub-Riemannian distance $d$
%is a metric space $(M,d)$ (often called \emph{Carnot-Carath\'{e}odory
%space}) which has the same topology than the
%manifold $M$.

In this paper we study sub-Riemannian manifolds with singularities. Thus it is natural to work in a larger setting, where the map $q\mapsto \Delta_q$ itself may have singularities. This leads us to the following generalized definition \cite{ABB,bellaiche}.

\begin{definition}\label{SR} A {\it sub-Riemannian} structure on a manifold $M$ is a triplet $({\bf U},\ps,f)$ where $({\bf U},\ps)$ is a Euclidean vector bundle over $M$ (i.e., a vector bundle $\pi_{\bf U}:{\bf U}\to M$ equipped with a smoothly-varying scalar product $q\mapsto \langle\cdot,\cdot\rangle_q$ on the fibre ${\bf U}_q$) and $f$ is a morphism of vector bundles $f:{\bf U}\to TM$, i.e. a smooth map linear on fibers and such that, for every $u\in {\bf U}$, $\pi(f(u))=\pi_{\bf U}(u)$, where $\pi:TM\to M$ is the usual projection.
\end{definition}

% We will say that a sub-Riemannian structure on an analytic manifold $M$ is {\it analytic} if all the involved objects  belong to the analytic category.

Let $({\bf U},\ps,f)$ be a sub-Riemannian structure on $M$.  We define the submodule $\bD\subset \VecM$ as
\begin{equation}\label{module}
\bD=\{f\circ\sigma\mid \sigma \textrm{ smooth section of }{\bf U}\},
\end{equation}
 and  for $q \in M$ we set $\bD_q=\{X(q)\mid X\in\bD\}\subset T_qM$.  Clearly $\bD_q=f({\bf U}_q)$.
The  length of a tangent vector $v\in \bD_q$ is defined
as
\begin{equation}\label{qf}
g_q(v):=
\inf\{\langle u,u\rangle_q\mid f(u)=v, u\in {\bf U}_q\}.
\end{equation}
An absolutely continuous  curve $\gamma:[a,b]\to M$ is  {\it horizontal } if $\dot\gamma(t)\in \bD_{\g(t)}$ for almost every $t$.
If $\bD$ is Lie bracket generating, that is
\begin{equation}\label{bg}
\forall\, q\in M\quad \textrm{Lie}_q\bD=T_qM,
\end{equation}
then  the map $d:M\times M\to\R$  defined as the infimum of length of horizontal
curves  between two given points is a continuous distance as in the classic case. In this paper, all sub-Riemannian manifolds are assumed to satisfy the Lie bracket generating condition \r{bg}.

\brem
Definition~\ref{SR} includes the following cases.
\bi
\iii Classic sub-Riemannian structures: in this case ${\bf U}$ is a subbundle of $TM$ and $f$ is the inclusion. With the notations used at the beginning of this section, this amounts to take ${\bf U}=\Delta$, and $\ps=g^R$.  Then the module $\bD$ coincides with the module of smooth sections of the subbundle $\Delta$ and $\bD_q$ has constant dimension.
Moreover, for every $q\in M$ and $v\in\bD_q$, $g_q(v)=g_q^R(v,v)$.
\iii Sub-Riemannian structures associated with a family  of vector fields $X_1,\dots, X_m$: in this case ${\bf U}=M\times\R^m$ is the trivial bundle of rank $m$ over $M$, $\ps$ is the Euclidean scalar product on $\R^m$, and $f:M\times\R^m\to M$ is defined as $f(q,u)=\sum_{i=1}^mu_iX_i(q)$. Here  $\bD$ is the module generated by  $X_1,\dots X_m$ and $g$ is given by
$$
g_q(v)=\inf\left\{\sum_{i=1}^m u_i^2~\Big\vert~  v=\sum_{i=1}^mu_iX_i(q)\right\}.
$$
\ei
\erem

Let $({\bf U}, \ps, f)$ be a sub-Riemannian structure on a manifold $M$, and $\bD$, $g$ the corresponding module and quadratic form as defined in \r{module} and \r{qf}. In analogy with the classic sub-Riemannian case and to simplify notations, in the sequel we will refer to the sub-Riemannian manifold as the triplet $(M, \bD, g)$.  This is justified since all the constructions and definitions below rely only on $\bD$ and
$g$.

%
%We denote by $\bD_q\subset T_qM$ the fiber of $\bD$ over $q$. The
%subbundle $\bD$ can be identified with the module of sections
%$$
%\{X\in \VecM\mid X(q)\in\bD_q, \forall\,q\in M \}.
%$$
Given $i\geq 1$, define recursively  the
  submodule $\bD^i\subset \VecM$ by
  $$
\bD^{1}=\bD,\quad \bD^{i+1}=\bD^{i}+[\bD,\bD^{i}].
$$
Fix $p\in M$ and set $\bD^i_p=\{X(p)\mid X\in\bD^i\}$.
The Lie-bracket generating assumption implies that
there exists an integer $r(p)$
such that
 \begin{equation}\label{flagd}
\{0\}= \bD_p^0 \subset\bD_p^1\subset\dots\subset\bD_p^{r(p)}=T_p M.
\end{equation}
The sequence of subspaces \r{flagd} is called the {\it flag of $\bD$ at $p$}.
Set $n_i(p)=\dim\bD^i_p$ and
\begin{equation}
\label{defq}
Q(p)=\sum_{i=1}^{r(p)}i(n_{i}(p)-n_{i-1}(p)).
\end{equation}
 This integer will play a crucial role for determining the Hausdorff dimension of $(M,d)$. To write $Q(p)$ in a different way, we define the {\it weights} of the flag \r{flagd} at $p$ as the integers $w_1(p),\dots, w_{n}(p)$ such that $w_i(p)=s$
if $\dim\bD^{s-1}_p < i \leq \dim\bD^{s}_p$. Then $Q(p)= \sum_{i=1}^n w_i(p)$.

We say that a point $p$ is {\it regular} if, for every $i$, $n_i(q)$ is constant  as $q$ varies in a neighborhood of $p$. Otherwise, the point is said to be {\it singular}.
The sub-Riemannian manifold is called {\it equiregular} if every point is regular.
% When the whole manifold is equiregular, the integer $Q(q)$ defined in
%\r{defq} does not depend on $q$ and it coincides with the Hausdorff dimension of
%$(M,d)$ (see \cite{mitchell}).

\medskip

When the dimensions  $n_i(q)$ are constant on  $M$,    the module $\bD^i$ coincides with the  module of vector fields
$$
\{X\in \VecM\mid X(q)\in\bD^i_q~\forall\, q\in M\},
$$
i.e., vector fields that are tangent to the distribution $q\mapsto \bD^i_q$.
Yet the identification
 between the module $\bD^i$ and the map $q\mapsto \bD^i_q$
is no more meaningful when the dimension of $\bD^i_q$ varies as a
function of $q$  (see the discussion in \cite[page 48]{bellaiche}). Indeed, in the rank-varying case, a vector field tangent to $\bD^i_q$ at every $q \in M$ may fail\footnote{For instance, on $M=\R$, take the module $\bD\subset\mathrm{Vec }\, \R$ generated by $X(x)=x^2\partial_x$. Then the  vector field $Y(x)=x\partial_x$ is clearly tangent to the distribution $x\mapsto \bD_x$ but does not belong to $\bD$. } to be in the module $\bD^i$.
Definition~\ref{SR} allows to take account of structures where the dimensions   $n_i(q)$  (and in particular $n_1(q)$)  may vary.
\begin{example}[Grushin plane] Let $M=\R^2$, ${\bf U}=\R^2\times\R^2$ endowed with the canonical Euclidean structure, and $f$ be the morphism defined as follows. If $\sigma_1,\sigma_2$ is a global orthonormal basis on ${\bf U}$, we set $f(\sigma_1(x,y))=\partial_x$ and $f(\sigma_2(x,y))=x\partial_y$. Then $\bD_{(x,y)}$ is two dimensional for every $x\neq 0$, whereas $\dim\bD_{(0,y)}=1$.
\end{example}

\brem
\label{re:Q<}
At a regular point $p$, the equality $n_i(p)=n_{i+1}(p)$ implies that the local distribution  $q\mapsto \bD^i_q$ is involutive, and so that $n_i(p)=n_j(p)$ for any $j \geq i$. From the Lie bracket generating assumption \eqref{bg} we deduce $n_i(p) < n_{i+1}(p)$ for $i < r(p)$, which in turn implies $Q(p)\leq n^2$.
\erem

Given any sub-Riemannian manifold $(M, \bD, g)$ there always exist a (possibly very big) integer $m$ and vector fields $X_1,\dots, X_m$ such that $\bD$ is globally generated by $X_1,\dots, X_m$ and
$$
g_q(v)=\inf\left\{\sum_{i=1}^m u_i^2~\Big\vert~  v=\sum_{i=1}^mu_iX_i(q)\right\} \qquad \hbox{for every } q \in M, \ v \in T_q M.
$$
We call such a family $X_1,\dots, X_m$ a  \emph{(global) generating family} for the sub-Riemannian structure $(\bD, g)$. The existence of a generating family is a consequence of \cite[Corollary~3.16]{ABB}. For an alternative proof see also \cite{suss08}.

Consider a generating family $X_1,\dots, X_{m}$ for
 $(\bD,g)$.
A multi-index $I$ of length $|I|=j\geq 1$ is an element of
 $\{1,\dots,m\}^j$. With any multi-index $I=(i_1,\dots,i_j)$ is
 associated an iterated Lie bracket $X_I=[X_{i_1},[X_{i_2},
 \dots,[X_{i_{j-1}},X_{i_j}]\dots]$ (we set $X_I=X_{i_1}$ if $j=1$). The set of
 vector fields $\{X_I\mid |I|\leq j\}$ generates the module $\bD^j$. As a consequence, if the values of
 $X_{I_1},\dots,X_{I_n}$ at $p\in M$ are linearly independent, then
 $\sum_i |I_i| \geq Q(p)$.

Let $Y$ be a vector field. We define the  {\it length of $\,Y$} by
$$
\ell(Y)=\min\{i\in \N\mid Y\in\bD^i\}.
$$
In particular, $\ell (X_I) \leq |I|$.
% Note that, in general, if a vector field $Y$ satisfies $Y(p)\in\bD^i_p$ for every $p\in M$,  $Y$ need not be in the submodule $\bD^i$.
By an {\it adapted basis} to the flag \r{flagd} at $p$, we mean $n$ vector fields $Y_1,\dots, Y_n$  such that their values at $p$ satisfy
$$
\bD^i_p=\span\{Y_j(p)\mid  \ell(Y_j)\leq i\},\quad \forall\, i=1,\dots, r(p).
$$
In particular, $\sum_{i=1}^{n}\ell(Y_i) = Q(p)$. As a consequence, a family of Lie brackets $X_{I_1},\dots,X_{I_n}$ such that $X_{I_1}(p),\dots,X_{I_n}(p)$ are linearly independent is an adapted basis to the flag \r{flagd} at $p$ if and only if $\sum_i |I_i| = Q(p)$.

Let $h :U\to\R$ be a continuous function on a neighborhood of $p$. The {\it nonholonomic order of $h$ at $p$} is
$$
\ord_p(h):=\inf\{s\in (0,+\infty) \mid  h(q) = O(d(p,q)^s)\}.
$$
If $h$ is a smooth function, then $\ord_p(h)$ admits an algebraic characterization, namely,
$$
\ord_p(h)=\min\{s\in \N\mid \exists\, i_1,\dots, i_s\in\{1\dots, m\} \textrm{ such that } (X_{i_1}X_{i_2}\cdots X_{i_s}h)(p)\neq 0 \}.
$$
A smooth function $h$ is called  {\it privileged} at $p$ if
$$
\ord_p(h)=\max\{s\in\N\mid dh(\bD_p^{s}/\bD^{s-1}_p)\neq 0\},
$$
where $\bD_p^{s}/\bD^{s-1}_p$ is a vector subspace such that $\bD^s_p= \bD^{s-1}_p\oplus(\bD_p^{s}/\bD^{s-1}_p)$.
We say that coordinates $\varphi=(x_1,\dots,x_n):U\to\R^n$ are {\it privileged at $p$} if they are centered at $p$ and linearly adapted (that is the coordinate vector fields are an adapted basis to the flag at $p$), and if every coordinate function $x_i$ is privileged at $p$.

Note that in this definition of privileged coordinates, the coordinate functions are not ordered by increasing nonholonomic order. However the set  $\{\ord_p(x_i), i=1,\dots,n\}$ coincides with $\{w_1(p),\dots,w_n(p)\}$, so we can relabel the weights in such a way that $\ord_p(x_i)=w_i(p)$. We then say that the weights are \emph{labeled according to the coordinates $\varphi$}.

For every point $p\in M$ the metric tangent cone\footnote{in Gromov's sense, see \cite{gromov}} to $(M,d)$ at $p$ exists and is isometric to $(T_pM,\widehat d_p)$ where $\widehat d_p$ is the sub-Riemannian distance associated with the nilpotent approximation of the structure at $p$ (see \cite[Definition~5.15]{bellaiche} for the definition of nilpotent approximation). This was shown in \cite{mitchell} for equiregular sub-Riemannian manifolds and in \cite{bellaiche} for the general case.
% Henceforth we denote by $\widehat B_p$ the ball centered at $0$ and of radius $1$ in $T_pM$ with respect to $\widehat d_p$.
Isometries between the metric tangent cone and  $(T_pM,\widehat d_p)$ are given by   privileged coordinates at $p$.

 By construction, the metric tangent cone is endowed with a family of dilations $\delta_\lambda$ with respect to which the distance $\widehat d_p$ is homogeneous. If $\varphi:U\to\R^n$ is a system of privileged coordinates at $p$ then dilations are given by
$$
\delta_\lambda (x_1,\dots, x_n)=(\lambda^{w_1(p)}x_1,\dots, \lambda^{w_n(p)}x_n),
$$
 where $w_i(p)=\ord_p(x_i)$.

We refer the reader to \cite{ABB,bellaiche,montgomery} for a primer in sub-Riemannian geometry.

\section{Lebesgue decomposition of the Hausdorff volume}\label{lebdec}

In this section we investigate the relation between the Hausdorff volume   and a smooth volume on a sub-Riemannian manifold.
We first recall the equiregular case, where the situation is well understood, then we  consider the case where singular points are present. We write the Lebesgue decomposition of the Hausdorff volume with respect to the smooth volume and  we start the analysis of both the absolutely continuous part and the singular part.

\subsection{Equiregular case}

Assume $(M,\bD,g)$ is an equiregular connected sub-Riemannian manifold. Then $q \mapsto Q(q)$ is constant on $M$ (see \eqref{defq}) and we denote by $Q$ its constant value. Moreover at every point $p\in M$ the metric tangent cone to $(M,d)$, which is  isometric to $(T_pM,\widehat d_p)$, has a structure of a Carnot group. Assume $M$ is also oriented and let $\mu$ be a smooth volume on $M$.
The associated volume form $\omega$   induces canonically a left-invariant volume form $\hat\omega_p$ on $T_pM$. We denote by $\mup$ the smooth volume on $T_pM$ defined by $\hat\omega_p$. To clarify this construction, we refer the reader to Proposition~\ref{densities} in Section~\ref{seqsm}.

% In the statement below Hausdorff measures are computed with respect to the sub-Riemannian distance.

\bt\label{mainthreg}
 Let $(M,\bD,g)$ be an equiregular connected oriented sub-Riemannian manifold and let $\mu$ be a smooth volume on $M$. Then
\bi
 \iii[(i)] $\dim_{H}M=Q$ and $\volh=\ss^{Q}$;
\iii[(ii)] $\volh$ is a Radon measure on $M$;
\iii[(iii)] $\volh\ll\mu$ and $\mu\ll\volh$;
\iii[(iv)] the Radon-Nikodym derivative  $\frac{d\volh}{d\mu}(p)$ coincides with  the density $\lim_{\eps\to 0}\frac{\volh( B(p,\eps))}{\mu(
  B(p,\eps))}$, whose value is
    \begin{equation}\label{abscontrr}
\lim_{\eps\to 0}\frac{\volh(B(p,\eps))}{\mu(
  B(p,\eps))}=\frac{2^{Q}}{\mup(\widehat B_p)},~~ \forall\, p\in M,
\end{equation}
where $\widehat B_p$ denotes the ball centered at $0$ and of radius $1$ in $T_pM$ with respect to $\widehat d_p$.
\ei

\brem
 As we will see in Theorem~\ref{mainth} below, one can interpret the constant $2$ in \r{abscontrr} as the diameter of the ball  $\widehat
  B_p$ with respect to the distance $\widehat d_p$. Since the nilpotent approximation is a Carnot group here, we then have $2^Q=\diam_{\widehat d_p}(\widehat B_p)^Q=\ss^{Q}_{\widehat d_p}(\widehat B_p)$ (here $\ss^{Q}_{\widehat d_p}$ denotes the spherical Hausdorff measure in $T_pM$ with respect to the distance $\widehat d_p$), which gives a clear interpretation to \r{abscontrr}.
  \erem
\et

Theorem~\ref{mainthreg} is a special case of  Theorem~\ref{mainth} proved in Section~\ref{seqsm}.
Note that all the statements in Theorem~\ref{mainthreg} are well-known, but they have been proved only fragmentarily in several references: properties (i) and (iii) are stated in \cite{mitchell} (for a rigorous proof see  \cite{montgomery}), property (iv) in \cite{balu}. Up to the authors' knowledge, property (ii) has never been stated as is and it is a consequence of basic covering arguments in geometric measure theory.  In particular, this property is needed to apply the differentiation theorem for Radon measures \cite[Theorem~4.7 p.24]{simon} and ensures that $\frac{d\volh}{d\mu}(p)$ coincides with $\lim_{\eps\to 0}\frac{\volh( B(p,\eps))}{\mu(
  B(p,\eps))}$.
\brem
Properties (i), (ii), (iii)   hold true if we replace $\volh$ by ${\cal H}^{\dim_HM}$.
In particular, applying  the differentiation theorem for Radon measures \cite[Theorem~4.7 p.24]{simon} we get that
(i) the limit $\lim_{\eps\to 0}\frac{\cH^{Q}(B(p,\eps))}{\mu(B(p,\eps))}$ exists $\mu$-almost everywhere and it coincides with the Radon-Nikodym derivative of $\cH^Q$ with respect to $\mu$. Nevertheless, we do not have an explicit representation of such limit as we have for the spherical Hausdorff case in \r{abscontrr}.
\erem
A first consequence of Theorem~\ref{mainthreg} is that the Radon-Nikodym derivative of $\volh$ with respect to $\mu$ is continuous on $M$, see \cite[Corollary~2 and Section~4.1]{balu}. This is due to the fact that both the nilpotent approximation  at $q$  and the tangent measure $\muq$ depend smoothly on $q$. Studying higher regularity of $\frac{d\volh}{d\mu}$ is the main subject in \cite{balu}, to which we refer the interested reader.

A further consequence is that $\mu$ and $\volh$ are commensurable (see the last remark in \cite{mitchell}). This follows directly by the fact that  $q\mapsto\muq(\widehat B_q)$ is positive and continuous on $M$, so that    $\frac{d\volh}{d\mu}(q)$ and its inverse $\frac{d\mu}{d\volh}(q)$ are locally bounded.

In conclusion, when the manifold is equiregular, the Hausdorff volume essentially behaves as a smooth measure.
This fails when singular points are present, as we see in the next section.
%, we see how the presence of singular points affect the relation between the Hausdorff volume and a smooth volume and, in particular the regularity of the Radon-Nikodym derivative.

\subsection{Non equiregular case}

Assume $(M,\bD,g)$ is an oriented sub-Riemannian manifold and  $\mu$ is a smooth volume on $M$. The manifold is split into the disjoint union of two sets
$$
M=\Reg\cup\Sing,
$$
where  $\Reg,\Sing$ denote respectively the set of regular and singular points. Since the functions $q\mapsto \dim \bD^i_q$ are lower semi-continuous, $\Reg$ is an open and dense subset of $M$ and conversely $\Sing$ is a closed subset of empty interior.

We will assume in this paper that $\mu(\Sing)=0$. This assumption is satisfied for a very wide class of sub-Riemannian manifolds: for instance for analytic sub-Riemannian manifolds, for generic sub-Riemannian structures on a given manifold (see Section~\ref{gsc}), or when $q \mapsto \bD_q$ is a distribution of corank 1 \cite[Proposition~3.3]{Rifford2014}.
 Nevertheless one can build up sub-Riemannian manifolds where $\mu(\Sing) \neq0$ as in the next example.

\begin{example} %We construct a sub-Riemannian structure on $\R^4$ having a singular set of positive measure.
Let $S\subset \R^4$ be any closed set having empty interior and positive Lebesgue measure.
By Whitney's Extension Theorem (see for instance \cite[Proposition~A.8]{Madsen1997}), there exists a function $f\in\con^\infty(\R^4)$   such that $S=f^{-1}(0)$.
Consider the sub-Riemannian structure on $\R^4$ for which the following vector fields are a generating family,
$$
X_1=\partial_1,~~X_2=\partial_2+x_1\partial_3+\frac{x_1^2}{2}\partial_4,~~X_3=f(x)\partial_4.
$$
One easily checks that
the singular set coincides with $S$ and thus it is of positive measure.
\end{example}

 We are interested in properties of  $\volh=\ss^{\dim_HM}$, hence we first compute $\dim_HM$. Clearly, the Hausdorff dimension of $M$ is $\max\{\dim_H\Reg,\dim_H\Sing\}$.
 %  Denote by $Q(q)$ the integer
% $$
% \sum_{i=1}^{r(q)}i(\dim\bD^i_q-\dim\bD^{i-1}_q).
% $$
Then, using  property (i) in Theorem~\ref{mainthreg} the Hausdorff dimension of the regular set can be computed algebraically as
$$
\dim_H\Reg= \sup\{\dim_{H}O, \  O \textrm{ connected component of } \Reg\}=\max_{q\in\Reg}Q(q).
$$
This follows from the fact that $\Reg$ is a countable  union of open connected components, and by the inequality $Q(q)\leq n^2$ at every $q \in \Reg$ (see Remark~\ref{re:Q<}).
To remind its algebraic characterization, we denote   by $Q_\reg$ the number $\max_{q\in\Reg}Q(q)$ (which equals $\dim_H\Reg$). We present in Section~\ref{seqsm} an analogous method to compute the Hausdorff dimension of $\Sing$ under some stratification assumptions.

Applying Theorem~\ref{mainthreg} to every connected component of $\Reg$ we obtain the following result.

\begin{proposition}
\label{rnvolh}
Let $(M,\bD,g)$ be an oriented sub-Riemannian manifold and let $\mu$ be a smooth volume on $M$. Assume $\mu(\Sing) = 0$. Then $\ss^{Q_\reg}\llcorner_\Reg\ll\mu$, the Radon-Nikodym derivative of $\ss^{Q_\reg}\llcorner_\Reg$ with respect to $\mu$ exists $\mu$-almost everywhere, and it coincides with the density $\lim_{\eps\to 0}\frac{\ss^{Q_\reg}(B(q,\eps))}{\mu(B(q,\eps))}$ for every $q \in \Reg$.

Moreover, if $O$ is a connected component of $\Reg$ such that $\dim_HO =Q_\reg$, then
%for every Borel set $E\subset M$
%\begin{equation}\label{claim1}
%\ss^{Q_\reg}\llcorner_\Reg(E)=\int_{E\cap \Reg}\lim_{\eps\downarrow 0}\frac{\ss^{Q_\reg}(B(q,\eps))}{\mu(B(q,\eps))}d\mu(q),
%\end{equation}
%that is, $\ss^{Q_\reg}\llcorner_\Reg\ll\mu$.  Moreover,
\begin{equation}
\label{abscontreg}
\frac{d \ss^{Q_\reg}\llcorner_\Reg}{d \mu}(q)=  \frac{2^{Q_\reg}}{\muq( \widehat B_q)} \qquad \hbox{for every } q \in O.
\end{equation}
\end{proposition}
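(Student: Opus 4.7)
The strategy is to decompose $\Reg$ into its countably many open connected components $\{O_j\}_{j\in\N}$ and apply Theorem~\ref{mainthreg} to each. On every $O_j$ the growth vector is constant, so $(O_j,\bD|_{O_j},g|_{O_j})$ is an oriented equiregular connected sub-Riemannian manifold of Hausdorff dimension $Q_j:=Q(q)$, $q\in O_j$, with $Q_j\leq Q_\reg$. Theorem~\ref{mainthreg} then gives that the intrinsic Hausdorff volume $\ss^{Q_j}_{O_j}$ is a Radon measure on $O_j$, mutually absolutely continuous with $\mu|_{O_j}$, with density $2^{Q_j}/\muq(\widehat B_q)$ at every $q\in O_j$.

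For components with $Q_j<Q_\reg$ one has $\dim_HO_j=Q_j<Q_\reg$, hence $\ss^{Q_\reg}(O_j)=0$, so $\ss^{Q_\reg}\llcorner_{O_j}\equiv 0$ and the density limit is trivially zero. For a component $O=O_j$ with $Q_j=Q_\reg$, the key point is to bridge the intrinsic measure $\ss^{Q_\reg}_O$ with the restricted measure $\ss^{Q_\reg}\llcorner_O$. The bridge rests on a local observation: for any $q\in O$ and $\eps$ small enough that $B(q,2\eps)\subset O$, any $d$-ball of diameter less than $\eps$ meeting $B(q,\eps)$ is centered in $O$, and there it coincides with a $d|_O$-ball, because the two distances agree on $B(q,2\eps)$. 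Thus the covers used in the computations of $\ss^{Q_\reg}(B(q,\eps))$ and $\ss^{Q_\reg}_O(B(q,\eps))$ coincide for $\eps$ small, so the two measures agree on $B(q,\eps)$.

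Assembling these pieces, $\ss^{Q_\reg}\llcorner_{O_j}\ll\mu|_{O_j}$ for every $j$; summing and using $\mu(\Sing)=0$ gives $\ss^{Q_\reg}\llcorner_\Reg\ll\mu$ and the $\mu$-a.e.\ existence of its Radon-Nikodym derivative. For every $q\in\Reg$ the local identification above, combined with Theorem~\ref{mainthreg}(iv) applied on the component $O$ containing $q$, shows that $\lim_{\eps\to 0}\ss^{Q_\reg}(B(q,\eps))/\mu(B(q,\eps))$ exists and equals $0$ if $Q_O<Q_\reg$ and $2^{Q_\reg}/\muq(\widehat B_q)$ if $Q_O=Q_\reg$. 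The differentiation theorem for Radon measures (\cite[Theorem~4.7]{simon}), applied componentwise on the open $O_j$ where $\ss^{Q_j}_{O_j}$ is Radon by Theorem~\ref{mainthreg}(ii), then identifies this density with the Radon-Nikodym derivative $\mu$-a.e. The main obstacle is precisely the comparison between the intrinsic and restricted spherical Hausdorff measures in the second paragraph: the inequality \r{eqes} is strict in general, and one must check that it becomes an equality on sufficiently small balls, which ultimately reflects the fact that the sub-Riemannian distance is intrinsic on the open set $\Reg$.
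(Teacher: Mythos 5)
Your proof is correct and follows essentially the same route as the paper's: apply Theorem~\ref{mainthreg} componentwise on the open connected pieces of $\Reg$ and then invoke the differentiation theorem for Radon measures \cite[Theorem~4.7]{simon} to identify the density with the Radon--Nikodym derivative, treating separately the components of Hausdorff dimension strictly less than $Q_\reg$. The only difference is that you make explicit the identification of the intrinsic measure $\ss_O^{Q_\reg}$ with the restriction $\ss^{Q_\reg}\llcorner_O$ on small balls contained in a component $O$ (where the inequality \r{eqes} becomes an equality), a point the paper's proof leaves implicit.
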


\brem
The measure $\ss^{Q_\reg}\llcorner_\Reg$ may not be a Radon measure on $M$ (see Section~\ref{integra} below). Thus the existence of a Radon-Nikodym derivative and the fact that it coincides with the density are not consequences of $\ss^{Q_\reg}\llcorner_\Reg\ll\mu$.
\erem

\brem
In the statement above we can replace $\ss^{Q_\reg}$ by $\volh\llcorner_\Reg$. Indeed, if  $\dim_H\Sing >Q_{\reg}$, then $\volh=\ss^{\dim_H\Sing}$. As a consequence, $\volh\llcorner_{\Reg}\equiv 0$ and $\volh\llcorner_{\Reg}(B(q,\eps))=0$, for every $q\in \Reg$, so that the density becomes trivial. On the other hand, if $\dim_H\Sing \leq Q_{\reg}$ then $\volh\llcorner_{\Reg}=\ss^{Q_\reg}\llcorner_\Reg$, that is, the two measures coincides.
\erem

\brem
When there is a connected component $O\subset\Reg$ with $\dim_HO <Q_\reg$, the density $\lim_{\eps\downarrow 0}\frac{\ss^{Q_\reg}(B(q,\eps))}{\mu(B(q,\eps))}$  vanishes at every point $q\in  O$ and so $\frac{d \ss^{Q_\reg}\llcorner_\Reg}{d \mu} \equiv $ on $O$.
\erem

\begin{proof}[Proof of Proposition~\ref{rnvolh}]
 Apply Theorem~\ref{mainthreg} to  each connected component  of $ \Reg$. Then, by property (ii),  $\ss^{Q_\reg}$, as a measure on the metric space $(\Reg,d|_{\Reg})$, is Radon, that is, $\ss^{Q_\reg}$ is finite on compact subsets  contained in  $\Reg$. Moreover, by property (iii), $\ss^{Q_\reg}$, as a measure on $\Reg$, is absolutely continuous with respect to $\mu$. Obviously, $\mu$ is also a Radon measure on $M$ (and thus on $\Reg$) which is also locally doubling (see \cite{nsw}).
Thus we can apply the differentiation theorem for Radon measures (see for instance \cite[Theorem~4.7 p.24]{simon} with $X=\Reg$, $\mu_2=\volh$ and $\mu_1=\mu$) and deduce that, on $\Reg$,   the Radon-Nikodym derivative of $\ss^{Q_\reg}$ with respect to $\mu$ coincides with the density $\lim_{\eps\to 0}\frac{\ss^{Q_\reg}(B(q,\eps))}{\mu(B(q,\eps))}$.
In other words, for every Borel set $E\subset \Reg$,
$$
\ss^{Q_\reg}(E)=\int_E\lim_{\eps\to 0}\frac{\ss^{Q_\reg}(B(q,\eps))}{\mu(B(q,\eps))}d\mu(q).
$$
Since $\Reg$ is open and $\mu(M\setminus \Reg) =0$,  we deduce that for every Borel set $E\subset M$,
% (the singular part of $\cS^{Q_\reg}\llcorner_{\Reg}$ with respect to $\mu$ vanishes identically because the limit $\lim_{r\downarrow 0}\frac{\cS^{Q_\reg}(B(q,r))}{\mu(B(q,r))}$ is finite for every  $q\in \Reg$, so we are in the second case of that theorem.)
$$
\ss^{Q_\reg}\llcorner_\Reg(E)=\int_{E\cap \Reg}\lim_{\eps\to 0}\frac{\ss^{Q_\reg}(B(q,\eps))}{\mu(B(q,\eps))}d\mu(q),
$$
 which gives the conclusion.
\end{proof}

%\smallskip

Since $\Reg$ is open and Hausdorff measures are Borel regular, $\Reg$ and $\Sing$ are  $\mu$- and $\volh$-measurable.
Hence for every set $E\subset M$
$$
\volh(E)=\volh(E\cap \Reg)+\volh(E\cap\Sing),
$$
or equivalently, $\volh=\volh\llcorner_{\Reg}+\volh\llcorner_{\Sing}$.
Moreover, since $\mu(\Sing)=0$,  $\volh\llcorner_\Sing$ is concentrated on $\Sing$ whereas $\mu$ is concentrated on $\Reg$. Therefore $\mu$ and $\volh\llcorner_\Sing$ are mutually singular. We thus get directly the following fact.

\begin{corollary}\label{jkl}
Let $(M,\bD,g)$ be an oriented sub-Riemannian manifold and let $\mu$ be a smooth volume on $M$. Assume $\mu(\Sing)=0$. Then $\mu$ and $\volh\llcorner_{\Sing}$ are mutually singular and  the Lebesgue  decomposition of $\volh$ with respect to $\mu$ is
\begin{equation*}\label{dec}
\volh=\volh\llcorner_{\Reg}+\volh\llcorner_{\Sing}.
\end{equation*}
As a consequence,
(i) if  $\dim_H\Reg<\dim_H \Sing$ then  $\volh$ and $\mu$ are mutually singular;
(ii) if $\dim_H\Reg>\dim_H\Sing$ then $\volh\ll\mu$.
\end{corollary}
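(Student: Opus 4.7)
The decomposition itself is essentially already established in the paragraph preceding the corollary, so my plan is to formalize that reasoning and then handle the two dichotomies in cases (i) and (ii).

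First I would confirm the measure-theoretic setup. Since the maps $q\mapsto \dim\bD^i_q$ are lower semicontinuous, $\Reg$ is open and $\Sing$ is closed; both are therefore Borel, hence $\mu$- and $\volh$-measurable (Hausdorff measures on $(M,d)$ are Borel regular, and $d$ induces the manifold topology). For any Borel set $E\subset M$ we then have the disjoint decomposition
\[
\volh(E)=\volh(E\cap\Reg)+\volh(E\cap\Sing),
\]
which is precisely the identity $\volh=\volh\llcorner_{\Reg}+\volh\llcorner_{\Sing}$. The mutual singularity of $\mu$ and $\volh\llcorner_{\Sing}$ is then immediate: by hypothesis $\mu$ is concentrated on $\Reg$ while $\volh\llcorner_{\Sing}$ is concentrated on $\Sing$, and $\Reg\cap\Sing=\emptyset$.

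To conclude that the displayed decomposition is the Lebesgue decomposition of $\volh$ with respect to $\mu$, I need absolute continuity $\volh\llcorner_{\Reg}\ll\mu$. For this I invoke Proposition~\ref{rnvolh} together with the intermediate remark in its statement: either $\dim_H\Sing>Q_\reg$, in which case $\volh\llcorner_{\Reg}\equiv 0$ (because $\volh=\ss^{\dim_H\Sing}$ and $\dim_H\Reg=Q_\reg$ forces $\ss^{\dim_H\Sing}\llcorner_\Reg=0$), so trivially $\volh\llcorner_{\Reg}\ll\mu$; or $\dim_H\Sing\leq Q_\reg$, in which case $\volh\llcorner_{\Reg}=\ss^{Q_\reg}\llcorner_\Reg$, and absolute continuity with respect to $\mu$ is exactly the content of Proposition~\ref{rnvolh}. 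By uniqueness of the Lebesgue decomposition, this is the one.

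The two consequences follow by bookkeeping on Hausdorff dimension. In case (i), $\dim_H\Reg<\dim_H\Sing$ gives $\dim_H M=\dim_H\Sing$ and therefore $\volh=\ss^{\dim_H\Sing}$; restricted to $\Reg$ this measure vanishes (because $\ss^{\alpha}(E)=0$ whenever $\alpha>\dim_H E$), so $\volh=\volh\llcorner_{\Sing}$, which is mutually singular with $\mu$ by the first part. In case (ii), $\dim_H\Reg>\dim_H\Sing$ gives $\dim_H M=Q_\reg$ and $\volh=\ss^{Q_\reg}$; now $\ss^{Q_\reg}(\Sing)=0$, so $\volh\llcorner_{\Sing}\equiv 0$, leaving $\volh=\volh\llcorner_{\Reg}\ll\mu$ by the absolute continuity established above.

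There is no real obstacle; the only mild care needed is to invoke Proposition~\ref{rnvolh} in the borderline regime $\dim_H\Sing=Q_\reg$ (where $\volh\llcorner_\Reg$ still equals $\ss^{Q_\reg}\llcorner_\Reg$) and to be explicit that the vanishing $\ss^{\alpha}\llcorner_E=0$ for $\alpha>\dim_H E$ is being used to dispose of one summand in each of (i) and (ii).
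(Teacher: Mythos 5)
Your proposal is correct and follows essentially the same route as the paper: the paper derives the decomposition and the mutual singularity of $\mu$ and $\volh\llcorner_{\Sing}$ in the paragraph immediately preceding the corollary (openness of $\Reg$, Borel regularity, and concentration on disjoint sets), and obtains $\volh\llcorner_{\Reg}\ll\mu$ from Proposition~\ref{rnvolh} exactly as you do. The dimension bookkeeping for (i) and (ii) is likewise the intended argument, so there is nothing to add.
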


Corollary~\ref{jkl} provides the Lebesgue decomposition of $\volh$ with respect to a smooth measure $\mu$. In the sequel we are interested in studying the absolutely continuous part and the singular part of $\volh$.
 Note that the only case where $\volh\llcorner_\Reg\not\equiv 0$ is when $\dim_H\Reg\geq \dim_H\Sing$. In this case $\volh\llcorner_\Reg=\ss^{Q_\Reg}\llcorner_\Reg$. Thus the latter is the measure we study in Section~\ref{integra}.

\subsection{Stratification assumption}
\label{se:stratif}

To go further in the characterization of $\volh$, the first question is how to compute the Hausdorff dimension $\dim_H M$ and how to relate $\dim_H M$
to algebraic properties of the sub-Riemannian structure. %This is the main subject of our paper \cite{noi2}.

Recall that to compute $\dim_H\Reg$ one simply considers $\Reg$ as the disjoint union of open sets where the growth vector $(n_1(q),\dots,n_{r(q)}(q))$ is constant
and then compute the Hausdorff dimension of each component $O\subset\Reg$ by the algebraic formula
$$
\dim_H O=\sum_{i=1}^{r(q)}i(n_i(q)-n_{i-1}(q)).
$$
This idea can be carried out on the whole manifold, provided that $M$ can be stratified by suitable submanifolds. This motivates the following definition.

%\red{we should probably say that in the paper submanifold means embedded submanifold and we exclude bad behaviors.}
Let $N\subset M$ be a smooth connected submanifold and $q \in N$. The \emph{flag  at $q$ of $\,\bD$ restricted to $N$} is the sequence of subspaces
\begin{equation} \label{flagq}
\{0\} \subset (\bD_q^1\cap T_qN)\subset\dots\subset(\bD_q^{r(q)}\cap T_qN)=T_qN.
\end{equation}
Set
\begin{eqnarray}
& n_i^N(q)= \dim (\bD_q^i\cap T_qN)\qquad \hbox{and} \qquad
Q_N(q)=\sum_{i=1}^{r(q)}i(n^N_{i}(q)-n^N_{i-1}(q)).& \label{defqn}
\end{eqnarray}
 We say that
$N$ is {\it equisingular}\footnote{Note that in our previous paper \cite[Definition~1]{noi2}, these submanifolds are called {\it strongly equiregular}.} if
\bi
\iii[(i)] for every $i$,  the dimension
 $n_i(q)$ is constant as $q$ varies in $N$;
\iii[(ii)] for every $i$,  the dimension
 $n_i^N(q)$ is constant as $q$ varies in $N$.
 \ei
In this case, we denote by $Q_N$  the constant value of $Q_N(q)$,
 $q\in N$. We will see in Theorem~\ref{mainth} that $Q_N$ is actually the Hausdorff dimension of $N$.

\brem
%Apart from open submanifolds, whenever $N$ is an equisingular submanifold of dimension $k<\dim M$, then $N\subset\Sing$.
If $N$ is an open connected submanifold of $M$ then $N$ is equisingular if and only if it is equiregular, that is, condition (i) is equivalent to  condition (ii). For submanifolds $N$ of dimension  smaller than $\dim M$ being equisingular implies that $N$ is contained in the singular set (see Example~\ref{mart} below for a sub-Riemannian structure where the singular set is itself an equisingular submanifold).
\erem

\begin{definition}
\label{sequi}
We say that the sub-Riemannian manifold $(M,\bD,g)$ is \emph{stratified by equisingular submanifolds} if there exists a locally finite stratification $\Sing=\cup_i\Sing_i$ where every $\Sing_i$ is an equisingular submanifold.
\end{definition}

In the rest of the paper we will make the assumption that $(M,\bD,g)$ is stratified by equisingular submanifolds. The interest of such an assumption has already been raised by Gromov \cite[1.3.A]{gro96}.
It holds true in particular for generic smooth sub-Riemannian manifolds and for analytic ones. It obviously implies $\mu(\Sing)=0$ for any
smooth measure $\mu$, hence Corollary~\ref{jkl} and all the analysis of the previous section are valid. To shorten the notations we gather all the hypotheses we need on the sub-Riemannian manifold in the following assumption:
\begin{description}
  \item[(A)] The sub-Riemannian manifold $(M,\bD,g)$ is stratified by equisingular submanifolds, $M$ is oriented, and $Q(q)=Q_\Reg$ at every point $q \in \Reg$.
\end{description}
\medskip

As a first consequence of this  assumption, we obtain $\dim_H\Sing =\sup_i \dim_H{\Sing_i}$
%$$
%\dim_H\Sing=\sup\{\dim_HN\mid N\subset \Sing \textrm{ equisingular submanifold }\},
%$$
 and $\volh\llcorner_\Sing = \sum_i \volh\llcorner_{\Sing_i}$.   Thus, to characterize $\volh\llcorner_S$, it suffices to understand what happens on each stratum. This will be done Section~\ref{seqsm} where
 we develop the analysis of Hausdorff volumes on equisingular submanifolds started in \cite{noi2}.\medskip

As concerns the regular part of the Hausdorff volume, note that all connected components $O$ of $\Reg$ have the same Hausdorff dimension $Q_\reg$,  which implies that
\eqref{abscontreg} holds for every $q \in \Reg$. Also, it makes sense to study $\ss^{Q_\reg}\llcorner_\Reg$ rather than
$\volh\llcorner_\Reg$. Indeed, if $\dim_H\Sing\leq Q_\reg$ then  $\volh=\ss^{Q_\reg}$.
 Conversely,  if $\dim_H\Sing >Q_\reg$ then the absolutely continuous part $\volh\llcorner_\Reg$ vanishes identically, but $\ss^{Q_\reg}\llcorner_\Reg$ still satisfies Proposition~\ref{rnvolh}.

We will focus our analysis on the regularity properties of the derivative  $\frac{d\ss^{Q_\reg}\llcorner_\Reg}{d\mu}$ as they translate directly into properties  of the measure $\ss^{Q_\reg}\llcorner_\Reg$. In particular, $\ss^{Q_\reg}\llcorner_\Reg$   is commensurable to  $\mu$ if and only if $\dSdm$ is locally essentially bounded and bounded away for zero    and  $\ss^{Q_\reg}\llcorner_\Reg$ is a Radon measure if and only if $\dSdm$ is locally integrable with respect to $\mu$.
To carry out the analysis of this Radon-Nikodym derivative the main tool is the proposition below.

For simplicity,  we introduce the following notations. For two real numbers $t$ and $t'$ we write $t \asymp_C t'$ if $t'/C \leq t \leq C t'$, where $C>0$ is a constant. Moreover, given a family of vector fields $X_1,\dots, X_m$ and a $n$-tuple  $\cI=(I_ 1,\dots, I_n)$ of multi-indices we denote by $\cX_{\cI}$ the $n$-tuple of vector fields $(X_{I_1},\dots, X_{I_n})$ (where $X_{I_j}$ is the Lie bracket corresponding to the multi-index $I_j$).

%Recall .... Given a multi-index $I=(i_1,\dots,i_j)\in\{1,\dots,m\}^j$ we denote by $|I|$ its length $j$, and we denote by $X_I$ the Lie bracket $[X_{i_1},[X_{i_2},[\dots, X_{i_j}]\dots]]$.
\begin{proposition}\label{we}
Assume (A) and consider  a generating family $X_1,\dots, X_m$  for the sub-Riemannian structure.
For every compact subset $K \subset M$, there exists a constant
$C>0$ such that
$$
\dSdm(q) \asymp_C \frac{1}{\nu(q)}\quad \forall\,q \in  K\cap \Reg,
$$
where
\begin{equation*}\label{nuu}
{\nu(q)} = \sqrt{\sum_{\cI\in\cF}\omega(\cX_\cI)(q)^2 } \qquad \hbox{and} \qquad \cF=\left\{\cI=(I_1,\dots, I_n)~\left|~\sum_i|I_i|=Q_\reg\right.\right\}.
\end{equation*}
\end{proposition}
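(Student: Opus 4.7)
The plan is to combine the Radon--Nikodym formula \r{abscontreg} with an explicit computation of $\muq(\widehat B_q)$ in privileged coordinates built from iterated brackets. By Proposition~\ref{rnvolh} we have $\dSdm(q)=2^{Q_\reg}/\muq(\widehat B_q)$ for every $q\in\Reg$, so the statement is equivalent to the uniform estimate
\begin{equation*}
\muq(\widehat B_q)\asymp_{C'}\nu(q),\qquad q\in K\cap\Reg,
\end{equation*}
on which I would focus.

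Fix $q\in\Reg$ and any $\cI=(I_1,\dots,I_n)\in\cF$ with $\omega(\cX_\cI)(q)\neq 0$. Then $\cX_\cI(q)$ is linearly independent and, because $\sum_i|I_i|=Q_\reg$, forms an adapted basis at $q$. Canonical coordinates of the second kind associated with $\cX_\cI$, namely $(y_1,\dots,y_n)\mapsto e^{y_nX_{I_n}}\!\circ\cdots\circ e^{y_1X_{I_1}}(q)$, yield privileged coordinates at $q$ with weights $|I_1|,\dots,|I_n|$, and the Jacobian of this coordinate map at the origin equals $|\omega(\cX_\cI)(q)|$. A direct computation of the blow-up limit defining $\muq$ then shows that, in these coordinates, $\muq$ is the constant multiple $|\omega(\cX_\cI)(q)|$ of Lebesgue measure, whence
\begin{equation*}
\muq(\widehat B_q)=|\omega(\cX_\cI)(q)|\cdot\Leb{n}(\widehat B_q^{\cI}),
\end{equation*}
where $\widehat B_q^{\cI}$ denotes the image of the unit $\widehat d_q$-ball in these coordinates.

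For each $q\in K\cap\Reg$, I would then choose $\cI(q)\in\cF$ realizing $\max_{\cI\in\cF}|\omega(\cX_\cI)(q)|$. Since $\cF$ is finite and $\nu(q)/\sqrt{|\cF|}\leq|\omega(\cX_{\cI(q)})(q)|\leq\nu(q)$, this automatically yields $|\omega(\cX_{\cI(q)})(q)|\asymp\nu(q)$, so applying the formula above with $\cI=\cI(q)$ the target reduces to the uniform bound $\Leb{n}(\widehat B_q^{\cI(q)})\asymp 1$ on $K\cap\Reg$. Since $\cF$ is finite, one can partition $K\cap\Reg$ along the values of $\cI(q)$ and reduce further to a single claim: for each fixed $\cI\in\cF$, $\Leb{n}(\widehat B_q^{\cI})\asymp 1$ uniformly on the set $\{q\in K\cap\Reg : |\omega(\cX_\cI)(q)|\geq\nu(q)/\sqrt{|\cF|}\}$.

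The main obstacle is exactly this last uniform control as $q$ approaches $K\cap\Sing$: along such an approach, both $|\omega(\cX_\cI)(q)|$ and $\muq(\widehat B_q)$ tend to $0$ and the adapted basis $\cX_\cI$ degenerates, yet the volume $\Leb{n}(\widehat B_q^{\cI})$ of the nilpotent-approximation unit ball must remain bounded away from $0$ and $\infty$. This is precisely the type of statement provided by the uniform ball-box theorem on equisingular submanifolds, Proposition~\ref{le:unifbbox} (proved in the appendix); applying it stratum by stratum to the equisingular stratification of $K\cap\Sing$ granted by assumption~(A) yields the required uniform estimates and completes the proof.
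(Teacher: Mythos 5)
Your proposal is correct and follows essentially the same route as the paper: the paper derives Proposition~\ref{we} as the special case of Proposition~\ref{th:nuq} (see Remark~\ref{ggg}) in which $N$ is an open stratum of $\Reg$ and $\varpi=\omega$, and that proof likewise combines \r{abscontreg} with the computation of $\muq(\widehat B_q)$ in canonical coordinates of the second kind attached to a bracket family maximizing $|\omega(\cX_\cI)(q)|$, using the uniform Ball-Box Theorem (Proposition~\ref{le:unifbbox}) to bound $\Leb{n}(\widehat B_q^{\cI(q)})$ uniformly on $K\cap\Reg$. The only cosmetic remark is that Proposition~\ref{le:unifbbox} is stated uniformly over any compact $K\subset M$ for the exact $\arg\max$ family at each point, so no stratum-by-stratum argument (nor the enlargement to near-maximizers) is actually needed.
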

The proof is postponed to  Section~\ref{fghjk} . Actually, since $\dSdm(q)=\frac{2^{Q_\reg}}{\muq(\widehat B_q)}$ for $q\in\Reg$, Proposition~\ref{we} follows from  a particular instance of Proposition~\ref{th:nuq}, see also Remark~\ref{ggg}.

\brem
\label{re:ordnu}
Since the family $\cF$ is finite, one easily computes the nonholonomic order of the function $q\mapsto\nu(q)$ at $p\in \Sing$ as
\begin{equation*}
\ord_p\nu=\min_{\cI\in\cF}\ord_p(\omega(\cX_\cI)).
\end{equation*}
 Moreover,   in the statement of Proposition~\ref{we}, $\nu$  can be replaced by
$$
\bar \nu(q)=\max_{\cI\in\cF}|\omega(\cX_{\cI})(q)|.
$$
\erem

%To clarify the meaning of $\nu$, let us compute it on a specific example.
\begin{example}[the Martinet space]\label{mart}
Let us compute $\nu$ on a specific example. Consider the sub-Riemannian structure on $\R^3$ given by the generating family
$$
X_1=\partial_1,\quad X_2=\partial_2+\frac{x_1^2}{2}\partial_3.
$$
We choose $\omega=dx_1\wedge dx_2\wedge dx_3$, that is, the canonical volume form on $\R^3$.
 The growth vector is equal to $(2,2,3)$ on the plane $\{ x_1=0\}$, and it is  $(2,3)$ elsewhere. As a consequence, $\Sing$ coincides with  $\{ x_1=0\}$  and is equisingular, since   $n_1^\Sing(0,x_2,x_3)=n_2^\Sing(0,x_2,x_3)=1, n_3^\Sing(0,x_2,x_3)=2$.  At regular points we have $Q_\reg= 4$ and the only adapted basis at regular points is   $(X_1, X_2, [X_1,X_2])$. Then
$$
\nu(x_1,x_2,x_3)\asymp_C\bar\nu(x_1,x_2,x_3)=|x_1|.
$$
\end{example}

By Proposition~\ref{we}, on $\Reg$ the Radon-Nikodym derivative $\dSdm$ is  locally    equivalent to $1/\nu$.
Henceforth it is sufficient to study  essential boundedness and integrability of $1/\nu$.
It turns out that this function is not locally essentially bounded around the singular set.

\begin{proposition}\label{bdd}
Under assumption (A), for every $p\in \Sing$ and every neighborhood $U\subset M$  of $p$, 
$$
\displaystyle{
\mathrm{essup}\dSdm \big|_{U}=+\infty.
}
$$
As a consequence, $\ss^{Q_\reg}\llcorner_\Reg$ and $\mu$ are not commensurable when $\Sing\neq\emptyset$.
\end{proposition}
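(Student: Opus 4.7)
The plan is to combine Proposition~\ref{we} with an algebraic observation about singular points: at $p\in\Sing$ the integer $Q(p)$ is strictly greater than $Q_\reg$, which forces $\nu(p)=0$.

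First I would show that $Q(p)>Q_\reg$ for every $p\in\Sing$. Since $q\mapsto n_i(q)$ is lower semicontinuous, for any regular point $q$ near $p$ one has $n_i(p)\leq n_i(q)$ for all $i$, and by definition of singularity, strict inequality must hold for at least one $i$. Rewriting $Q$ in terms of weights, $Q(p)=\sum_{i=1}^n w_i(p)$, the condition $n_i(p)\leq n_i(q)$ translates into $w_j(p)\geq w_j(q)$ for every $j$, with strict inequality for some $j$; summing gives $Q(p)>Q(q)=Q_\reg$ by assumption (A).

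Next I would use this to show $\nu(p)=0$. Recall from the discussion preceding the statement of adapted bases that, whenever $X_{I_1}(p),\dots,X_{I_n}(p)$ are linearly independent, $\sum_i |I_i|\geq Q(p)$. Hence, for any $n$-tuple $\cI=(I_1,\dots,I_n)\in\cF$, which satisfies $\sum_i|I_i|=Q_\reg<Q(p)$, the vectors $X_{I_1}(p),\dots,X_{I_n}(p)$ must be linearly dependent, so $\omega(\cX_\cI)(p)=0$. Since this is true for every $\cI\in\cF$, we obtain $\nu(p)=0$.

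The function $\nu$ is continuous on $M$ (it is built out of smooth vector fields and the smooth form $\omega$), and thus $1/\nu$ is unbounded on every neighborhood $U$ of $p$. More precisely, for every $R>0$ the open set $V_R=\{q\in U : \nu(q)<1/R\}$ contains $p$, hence has positive $\mu$-measure, and $\mu(V_R\cap\Reg)>0$ since $\mu(\Sing)=0$. By Proposition~\ref{we}, there exists $C>0$ (depending only on $\overline{U}$) such that $\frac{d\ss^{Q_\reg}\llcorner_\Reg}{d\mu}(q)\geq R/C$ on $V_R\cap\Reg$, a set of positive $\mu$-measure. Letting $R\to\infty$ yields
\[
\mathrm{essup}\,\frac{d\ss^{Q_\reg}\llcorner_\Reg}{d\mu}\bigg|_{U}=+\infty.
\]
For the final assertion, if $\ss^{Q_\reg}\llcorner_\Reg$ and $\mu$ were commensurable, the Radon--Nikodym derivative would by definition be locally essentially bounded, contradicting what we just proved as soon as $\Sing\neq\emptyset$. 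The main (mild) subtlety is the semicontinuity argument giving $Q(p)>Q_\reg$; everything else is a direct consequence of Proposition~\ref{we}.
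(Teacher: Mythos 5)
Your proof is correct and follows essentially the same route as the paper's: both deduce from $Q_\reg<Q(p)$ that every $n$-tuple $\cX_\cI$ with $\cI\in\cF$ is linearly dependent at $p$, hence $\nu(p)=0$, and then invoke Proposition~\ref{we}. The only difference is that you spell out the semicontinuity argument for $Q(p)>Q_\reg$ and the passage from $\nu\to 0$ to an infinite essential supremum, which the paper leaves implicit.
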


\begin{proof}
%The family of adapted bases at any point $q$  is finite, so the function $\nu(\cdot)$ is  smooth.
 Let $\cI=(I_1,\dots, I_n)\in\cF$. Recall   that if the values of
 $X_{I_1},\dots,X_{I_n}$ at a point $q\in M$ are linearly independent, then
 $\sum_i |I_i|=Q_\reg \geq Q(q)$ (see Section~\ref{se:srman}). Let $p\in \Sing$. Since $Q_\reg <Q(p)$,
%Let $\cI=(I_1,\dots, I_n)\in\cF$. If  $X_{I_1},\dots, X_{I_n}$ is a family  of vector fields defined on an open set $O$ (intersecting $\Sing$) such that $\sum_i|I_i|=Q_\reg$ then   $X_{I_1},\dots, X_{I_n}$ is an adapted basis at  a point $q\in O\cap \Reg$. Let $p\in O\cap \Sing$. Then $Q(p)>Q_\reg$ and   $X_{I_1}, \dots, X_{I_n}$ is not adapted at $p$.
 the vectors $X_{I_1}(p), \dots, X_{I_n}(p)$ are linearly dependent.  Thus $\nu(q)\to 0$ as $q \to p$ and the conclusion follows from Proposition~\ref{we}.
\end{proof}

Thus $\ss^{Q_\reg}\llcorner_\Reg$ and $\mu$ are never commensurable in non equiregular sub-Riemannian manifolds. It makes sense now to ask  whether $\dSdm$ is locally integrable with respect to $\mu$. The answer to this question is more involved and is the object of
Section~\ref{integra}.

\brem
Let us mention that, on $\Reg$, there is a smooth volume which is intrinsically associated with the sub-Riemannian structure (i.e., to the pair $(\bD,g)$), namely the Popp measure $\cP$. An explicit formula for this measure is given in \cite{br}. More precisely, thanks to \cite[Theorem~1]{br}, the Radon-Nikodym derivative $\frac{d\cP}{d\mu}$ satisfies
$$
\frac{d\cP}{d\mu}(q) \asymp_C \frac{1}{\nu(q)}, \qquad \hbox{for every } q\in K\cap\Reg,
$$
where $K$ is any compact set in $M$.  As a consequence, $\cP$ and $\ss^{Q_\reg}\llcorner_\Reg$ are  commensurable on $M$ ($\cP$ is extended to a measure on $M$ by setting $\cP \llcorner_\Sing =0$). In particular,  $\cP$ and $\mu$ are not commensurable when $\Sing\neq\emptyset$, and $\cP$ is Radon if and only if $\ss^{Q_\reg}\llcorner_\Reg$ is.
\erem

\brem\label{ess} In Proposition~\ref{we} and Proposition~\ref{bdd}, the only conditions of assumption (A) needed are $Q(q)=Q_\reg$ for every $q\in\Reg$ and $M$ oriented. The stratification assumption is redundant.
\erem

 %%%%%%%%%%%%%%%%%%%%%%%%%%%%%%%%%%%%%%%%%%%%%%

\section{Local integrability  of the Radon-Nikodym derivative}\label{integra}

Thanks to Theorem~\ref{mainthreg}, $\dSdm$ is continuous on $\Reg$ and thus  locally $\mu$-integrable on $\Reg$. Nevertheless, by Proposition~\ref{bdd} $\dSdm$ explodes when approaching $\Sing$. Therefore, the question remains whether  $\dSdm$ is locally $\mu$-integrable  around singular points, or equivalently whether $\ss^{Q_\Reg}\llcorner_\Reg$ is a Radon measure.
 The aim of this section is to provide sufficient conditions for  $\dSdm$ to be locally $\mu$-integrable or not around $\Sing$.

In Section~\ref{asdf} we express in some suitable coordinates an integral whose finiteness is equivalent to the local integrability of  $\dSdm$. We then use this expression in Section~\ref{tyui} to deduce explicit algebraic conditions for both integrability and  non-integrability of $\dSdm$.

%%%%%%%%%%%%%%%%%%%%%%%%%%%%%%%%%%%%%%%%%%%%%%%

\subsection{Computation in local coordinates}
\label{asdf}
%Since $\frac{d\volh\llcorner \Reg}{d\mu}$ vanishes identically on $\Reg\setminus \tilde \Reg$, without loss of generality we will assume that $\Reg=\tilde \Reg$, that is, every connected component $O\subset \Reg$ satisfies $\dim_HO=Q_\reg$.

Consider an equisingular submanifold $N$ and a point $p\in N$.
Thanks to Proposition~\ref{we}, under assumption (A) the  local $\mu$-integrability of $\dSdm$ near $p$ is equivalent to the one of $\frac{1}{\nu}$. The latter can be conveniently characterized in well-chosen systems of coordinates.

%%%%%%%%%%%%%%%%%%%%%%%%%%%%%%%%%%%%%%

\begin{lemma}
\label{tech} Assume (A).
Let $N$ be an equisingular $k$-dimensional submanifold, $p\in N$,  and  $\varphi=(y,z) \in\R^{k}\times\R^{n-k}$ be  local coordinates centered at $p$ such that, near $p$,
  $$
  \varphi(N) \subset \{ z=0\}.
  $$
Then $\dSdm$ is locally $\mu$-integrable near $p$ if and only if, for  $R>0$ small enough,
\begin{equation}
\label{eq:eucl}
\int_0^{R}\lambda^{n-k-1} \, \left(\int_{|y|\leq {R}}\left(\int_{\{|z|=1\}}\frac{d\sigma(z)}{\nu(\Phi(y,\lambda z))}\right) dy \right)  d\lambda < + \infty,
\end{equation}
where $\Phi=\varphi^{-1}$, $|y|=\max_{i\in\{1,\dots, k\}}|y_i|$, $|z|= \max_{i\in\{k+1,\dots, n\}}|z_i|$, 
$\lambda z=(\lambda z_{k+1},\dots, \lambda z_{n})$, and $d\sigma(z)$ denotes the $(n-k-1)$-Lebesgue measure on $\{|z|=1\}$.

%If $k=n-1$, then $\dSdm$ is locally $\mu$-integrable near $p$ if and only if, for every small $R>0$,
%\begin{equation}
%\label{eq:eucl_codim1}
%\int_{|y|\leq {R}}\left(\int_{-R}^R \frac{dz_n}{\nu(\Phi(y, z_n))}\right) dy    < + \infty.
%\end{equation}

If moreover $\varphi$ is a system of privileged coordinates at $p$, then $\dSdm$ is locally $\mu$-integrable near $p$ if and only if, for  $R>0$ small enough,
\begin{eqnarray}
\label{plm}
 \int_0^{R}\lambda^{Q(p)-Q_N-1}\,\left(  \int_{\|y\|\leq {R}} \left( \int_{\{\|z\|=1\}} \frac{d\tilde{\sigma}(z)}{\nu(\Phi(y,\delta_\lambda z))} \right)dy  \right) \,d\lambda & < & + \infty ,
%\int_{\|y\|_p\leq A}\left(\int_0^A\lambda^{Q(p)-Q_N-1}\left(\int_{\{\|z\|_p=1\}}\frac{d\sigma(z)}{\nu(\Phi_p(y,\delta_\lambda z))}\right) d\lambda\right)dy.
\end{eqnarray}
where
%\begin{equation}\label{ilambda}
%\IC_R(\lambda)=\int_{\|y\|\leq {R}}\left(\int_{\{\|z\|=1\}}\frac{d\tilde{\sigma}(z)}{\nu(\Phi(y,\delta_\lambda z))}\right)dy,
%\end{equation}
%where $\Phi=\varphi^{-1}$,
$$
 \|y\|=\max_{i\in\{1,\dots, k\}}|y_i|^{w_i(p)}, \quad\|z\|= \max_{i\in\{k+1,\dots, n\}}|z_i|^{w_i(p)}, \qquad
\delta_\lambda z= (\lambda^{w_{k+1}(p)}z_{k+1},\dots,\lambda^{w_{n}(p)}z_{n}),
$$
$w_i(p)$ are  the weights at $p$ labeled according to the coordinates $\varphi$, and $d \tilde{\sigma}(z)$ denotes the $(n-k-1)$-Lebesgue measure on $\{\|z\|=1\}$.
\end{lemma}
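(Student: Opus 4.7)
The plan is to translate the local $\mu$-integrability of $\dSdm$ near $p$ into a Lebesgue integrability statement in the chart $\varphi$, and then to perform a polar-type change of variable in the $z$-block, isotropic for the first claim and anisotropic (via the dilations $\delta_\lambda$) for the second. By Proposition~\ref{we} there exists a compact neighborhood $K$ of $p$ on which $\dSdm \asymp_C 1/\nu$; in the chart $\varphi$ the smooth volume writes $\mu = \rho(y,z)\,dy\,dz$ for some smooth density $\rho$ bounded above and below by positive constants on a small box around the origin. Since $\mu(\Sing)=0$, the local $\mu$-integrability of $\dSdm$ at $p$ is thus equivalent to the finiteness of $\int_{\{|y|\leq R\}\times\{|z|\leq R\}} \nu(\Phi(y,z))^{-1}\,dy\,dz$ for all sufficiently small $R>0$.

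For the first equivalence I would apply Fubini and, in the $z$-part, use the polar decomposition with respect to the $\ell^\infty$-norm: each $z\neq 0$ writes uniquely as $z=\lambda z'$ with $\lambda=|z|>0$ and $|z'|=1$, and the corresponding disintegration of Lebesgue measure is $dz = \lambda^{n-k-1}\,d\lambda\,d\sigma(z')$, with $d\sigma$ the $(n-k-1)$-Lebesgue measure on the cube sphere $\{|z'|=1\}$. Inserted into the integral above, this immediately yields \eqref{eq:eucl}.

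For the second equivalence, when $\varphi$ is privileged at $p$ I replace the Euclidean dilations by the anisotropic ones $\delta_\lambda$: each $z\neq 0$ writes uniquely as $z=\delta_\lambda z'$ with $\lambda=\|z\|$ and $\|z'\|=1$, giving a disintegration $dz = \lambda^{\sum_{i>k}w_i(p)-1}\,d\lambda\,d\tilde\sigma(z')$. To identify the exponent with $Q(p)-Q_N-1$, I use that $\varphi$ being privileged and linearly adapted at $p$ forces the coordinate vector fields $\partial_{x_i}$ to form an adapted basis of the flag \eqref{flagd} at $p$, with $\ell(\partial_{x_i})=w_i(p)$. Since $\varphi(N)\subset\{z=0\}$ locally, the vectors $\partial_{y_1},\dots,\partial_{y_k}$ span $T_pN$ and, because $\partial_{y_i}|_p\in\bD_p^{w_i(p)}\setminus\bD_p^{w_i(p)-1}$, they form an adapted basis of the flag \eqref{flagq} of $\bD$ restricted to $N$ at $p$. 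Hence $Q_N=\sum_{i=1}^k w_i(p)$, so $\sum_{i>k}w_i(p) = Q(p)-Q_N$, and substitution produces \eqref{plm}.

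The only delicate point is the Jacobian computation for the anisotropic polar change of variables, namely that the weighted unit sphere $\{\|z'\|=1\}$ is a piecewise smooth $(n-k-1)$-dimensional set carrying a natural Radon measure $d\tilde\sigma$ and that the disintegration $dz=\lambda^{\sum_{i>k}w_i(p)-1}\,d\lambda\,d\tilde\sigma$ holds on $\R^{n-k}\setminus\{0\}$; this is a direct application of the scaling $\delta_\lambda^* dz = \lambda^{\sum_{i>k}w_i(p)}\,dz$ combined with a standard coarea argument for the homogeneous norm $\|\cdot\|$. Everything else is the routine reduction via Proposition~\ref{we} together with the elementary identification $Q_N=\sum_{i\leq k}w_i(p)$.
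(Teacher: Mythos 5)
Your proof is correct and follows essentially the same route as the paper's: reduce via Proposition~\ref{we} to the finiteness of $\int 1/\nu$ over a coordinate box, apply Fubini, and perform the isotropic (resp.\ anisotropic, via $\delta_\lambda$) polar change of variables in the $z$-block, identifying the exponent through $Q_N=\sum_{i\le k}w_i(p)$ as in Remark~\ref{re:QN}. The only difference is that you spell out the justification of that identity (adaptedness of $\partial_{y_1},\dots,\partial_{y_k}$ to the restricted flag), which the paper simply records in Remark~\ref{re:QN}.
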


\brem
\label{re:QN}
Since $N$ is equisingular, the functions $q \mapsto w_i(q)$ are constants on $N$ and the integers $Q(q)$  and $Q_N=Q_N(q)$ (see \eqref{defqn}) may be computed as
$$
Q(q) = \sum_{i=1}^n w_i(q) \qquad \hbox{and} \qquad Q_N = \sum_{i=1}^k w_i(q).
$$
\erem

\begin{proof}
According to the notation in the statement above,  we set
$\varphi=(y_1,\dots,y_k, z_{k+1},\dots,z_n)$. This allows to  distinguish which coordinates parameterize $N$, namely the first group $y$, and which ones are transversal to $N$, namely the second group $z$.

Let $U$ be a small neighborhood of $p$. Thanks to the expression of $\dSdm$ provided by Proposition~\ref{we}, the finiteness of $\ss^{Q_\reg}\llcorner_{\Reg}(U)$ reduces to the finiteness of the integral $\int_{U}\frac{1}{\nu(q)}d\mu(q)$.
Applying the change of variables $\varphi$ in the integral and taking upper and lower bounds for $|\det J\Phi (y,z)|$ near $(0,0)$, we obtain
\begin{eqnarray*}
\ss^{Q_\reg}\llcorner_{\Reg}(U) &\asymp_{C}&\int_{\varphi(U)}\frac{1}{\nu(\Phi(y,z))}\,dy dz = \int_{y\in\Omega_1}\left(\int_{z\in \Omega_2}\frac{1}{\nu(\Phi(y,z))}dz\right)dy,
\end{eqnarray*}
where we have written $\varphi(U)$ as $\Omega_1 \times \Omega_2$, with $\Omega_1 \subset \R^k$ and $\Omega_2 \subset\R^{n-k}$ open subsets.

Since $\varphi(U)$ contains and  is contained in sets of the form $\{ |y| <R \}\times \{ |z| <R \}$, it is sufficient to consider the case where $\Omega_1 = \{ |y| <R \}$ and $\Omega_2 = \{ |z| <R \}$
%\begin{eqnarray*}
%\Omega_1&=&\left\{y\in\R^k\big| \max_{i\in\{1,\dots, k\}}|y_i|^{w_i(p)} < A\right\}, \qquad
%\Omega_2=\left\{z\in\R^{n-k}\big|  \max_{i\in\{k+1,\dots, n\}}|z_i|^{w_i(p)} < A\right\},
%\end{eqnarray*}
for some small $R>0$. Criterion~\eqref{eq:eucl} follows directly by a change of coordinates $z = \lambda \tilde{z}$, $|\tilde{z}| = 1$,  in the integral on $\Omega_2$.

Assume now that the coordinates $\varphi$ are privileged at $p$. As above, it is sufficient to consider the case where $\Omega_1 = \{ \|y\| <R \}$ and $\Omega_2 = \{ \|z\| <R \}$ for some small $R>0$.
Recall that in privileged coordinates, dilations take the form
$$
\delta_\lambda (y,z)=(\lambda^{w_1(p)}y_1,\dots,\lambda^{w_{k}(p)}y_{k}, \lambda^{w_{k+1}(p)}z_{k+1},\dots,\lambda^{w_{n}(p)}z_{n}).
$$
By construction, the pseudo-norm $\|\cdot\|$ is homogeneous with respect to $\delta_\lambda$, and we have
 %since $\varphi_p$ are privileged,
$
\Omega_2=\{\delta_\lambda z \mid \|z\|=1, \lambda\in[0,R)\}.
$
Thus, changing variables in the integral, we obtain
\begin{equation*}\label{hhhh}
 \int_{y\in\Omega_1}\left(\int_{z\in \Omega_2}\frac{1}{\nu(\Phi(y,z))}dz\right)dy=\int_{\|y\| \leq R}\left(\int_0^R\lambda^{\sum_{j=k+1}^n w_{j}-1}\left(\int_{\{\|z\|=1\}}\frac{d\sigma(z)}{\nu(\Phi(y,\delta_\lambda z))}\right) d\lambda\right)dy. %,
\end{equation*}
By Remark~\ref{re:QN}, $\sum_{j=k+1}^nw_{j}=Q(p)-Q_N$ and we obtain criterion \eqref{plm}.
\end{proof}
As pointed out in Remark~\ref{ess}, in Lemma~\ref{tech} we do not need the stratification assumption, but only the other two conditions in assumption (A). Nevertheless, whenever $M$ is stratified by equisingular submanifolds and $p\in M$, there exists a unique equisingular submanifold $N$ containing $p$.

%%%%%%%%%%%%%%%%%%%%%%%%%%%%%%%%%%%%%%%%%%%%%%%

We end this section  with the construction of a system of coordinate  around $p$ satisfying the assumptions of the previous lemma. This construction will also prove to be useful in Section~\ref{seqsm}.

\bl
\label{cp} Let $N$ be an equisingular submanifold.
Then there exist a neighborhood $U$ of $p$ and a smooth map
\begin{eqnarray*}
\varphi: U\cap N&\to& {\cal C}^{\infty}(U,\R^n)\\
q&\mapsto& \varphi_q:U\to\R^n
\end{eqnarray*}
 such that, for every $q\in U\cap N$,
 \bi
\iii[$(\mathrm{I})$] $\varphi_q$ is a system of privileged coordinates centered at $q$;
\iii[$(\mathrm{II})$] $\varphi_q(U\cap N) \subset \{(x_1,\dots,x_n)\in\R^n \mid x_{k+1}=\dots=x_n=0\}$, where $k=\dim N$.
\ei
Moreover, for every $q\in U\cap N$ and for every $q'\in U$,
\begin{equation}\label{pro}
(\varphi_q(q'))_{i}=(\varphi_{p}(q'))_{i}, \quad i=k+1, \dots, n.
\end{equation}
\el

\begin{proof}
For every $q\in N$, let us consider the flag of $\bD$ restricted to $N$
$$
(\bD_q^1\cap T_qN)\subset\dots\subset(\bD_q^{r}\cap T_qN)=T_qN.
$$
Recall that  the integers $n_i^N=\dim(\bD^i_q\cap T_qN)$ do not depend on $q$ in $N$. Thus, in a  neighborhood $U$ of $p$, there exist a family of vector fields $Z_1,\dots, Z_k$ on $U$  such that, for every $q\in N$ and for every $i=1,\dots, r$,
  $$
\bD^i_q\cap T_qN=\span\{Z_j(q)\mid   \ell(Z_j)\leq i \}.
$$
In particular, we have $T_qN=\span\{Z_1(q),\dots, Z_k(q)\}$,   and
$$
\qn=\sum_{i=1}^{r}i(n^N_{i}-n^N_{i-1})=\sum_{i=1}^{k}\ell(Z_i).
$$
Possibly reducing $U$, since $q\mapsto \dim\bD^i_q$ is constant on $N$, there exist
vector fields $Z_{k+1},\dots, Z_{n}$ such that the family $Z_1,\dots, Z_n$ is adapted to the flag of $\bD$  at every $q \in N \cap U$ (see  \r{flagd}).
%satisfies the following property: for every $q\in N$
% $$
% \bD^i_q=\span\{Z_j(q)\mid   \ell(Z_j)\leq i \},\quad \forall\, i=1,\dots, r(q).
% $$
%Up to replacing some vector field with its opposite, we can assume that $(Z_1(q),\dots, Z_n(q))$ is a positively oriented basis of $T_qM$.

Using these bases, we define
for   $q \in  N \cap U$, a local diffeomorphism
$\Phi_q:\R^n\to M$ by
\begin{equation}\label{coorn}
\Phi_q(x)=\exp\left(x_nZ_n\right)\circ\dots\circ\exp\left(x_{1}Z_{1}\right)(q).
\end{equation}
 The inverse $\varphi_q=\Phi_q^{-1}$ of $\Phi_q$ provides a system of
 privileged coordinates at
 $q$ (see \cite{hermes}) and in these coordinates $N$ coincides locally with the set $\left\{
   x_{k+1}=\dots=x_n=0 \right\}$.  Moreover, the map $(q,x) \mapsto\Phi_q(x)$ is smooth on $N \cap U$, which completes the proof of points (I) and (II).

Finally, let us prove \r{pro}.  Let $q\in U\cap N$ and $q'\in U$. We denote respectively by $x$ and $y$ the coordinates of $q'$ centered at $q$ and $p$, that is,
$q'=\Phi_q(x)=\Phi_p(y)$. By construction, the point $\exp\left(x_{k}Z_{k}\right)\circ\dots\circ\exp\left(x_{1}Z_{1}\right) (q)$
belongs to $U\cap N$, and hence may be written as $\Phi_p(z)$ with $z_{k+1} = \cdots = z_n=0$. Therefore,
$$
q' =\Phi_q(x) = \exp\left(x_{n}Z_{n}\right)\circ\dots\circ\exp\left(x_{k+1}Z_{k+1}\right)\circ\exp\left(z_{k}Z_{k}\right)\circ\dots\circ\exp\left(z_{1}Z_{1}\right)(p),
$$
which implies $y_i = x_i$ for $i=k+1, \dots, n$, i.e. \r{pro}.
\end{proof}

%%%%%%%%%%%%%%%%%%%%%%%%%%%%%%%%%%%%%%%%%%%%%%%

\subsection{Sufficient conditions}\label{tyui}
In the sequel, we provide sufficient conditions to obtain either local integrability or non integrability of $\frac{d\ss^{Q_\Reg}\llcorner_{\Reg}}{d\mu}$ around a fixed singular point. These conditions will be of different nature (either differential or metric), depending on whether they are obtained with \eqref{eq:eucl} or \eqref{plm}.

Note that by assumption (A), $\Sing$ admits a stratification by equisingular submanifolds, hence  every singular point belongs to an equisingular submanifold which is  determined in a unique way. Recall also from Proposition~\ref{we} that
\begin{equation*}
{\nu(q)} = \sqrt{\sum_{\cI\in\cF}\omega(\cX_\cI)(q)^2 } \qquad \hbox{and} \qquad \cF=\left\{\cI=(I_1,\dots, I_n)~\left|~\sum_i|I_i|=Q_\reg\right.\right\}.
\end{equation*}

\subsubsection{Conditions on nonholonomic orders}
Consider a singular point $p\in\Sing$ and  an equisingular submanifold $N \subset \Sing$ containing $p$. Define
$$
\varrho_{\min}(p,N)=\lim_{\eps\to 0}\min\{\mathrm{ord}_{q}\nu\mid {q\in N\cap B(p,\eps)}\},
$$
where $\ord_q(h)$ is the nonholonomic order of the function $h$ at $q$.
Note  that by Remark~\ref{re:ordnu},    $\ord_p\nu=\min_{\cI\in\cF}\ord_p(\omega(\cX_\cI))$, so that
$$
\varrho_{\min}(p,N)=\lim_{\eps\to 0}\min\{\mathrm{ord}_{q}(\omega(\cX_\cI))\mid {q\in N\cap B(p,\eps)}, \ \cI\in\cF \}.
$$

Next result gives an algebraic sufficient condition for a neighborhood of $p$ to have infinite $\ss^{Q_\Reg}$ measure. This condition involves the order of  $\nu$ at singular points around $p$ ($\varrho_{\min}(p,N)$), the flag of the distribution at $p$ ($Q(p)$) and the flag of the distribution restricted to the singular set at $p$ ($Q_N$).

\bp
\label{fin}
Assume (A). Let $p\in\Sing$ and let $N$ be an equisingular submanifold containing $p$. If $
 \varrho_{\min}(p,N) \geq Q(p)-Q_N$,  then $\frac{d\ss^{Q_\Reg}\llcorner_{\Reg}}{d\mu}$ is not $\mu$-locally integrable around $p$.
\ep

\begin{proof}
We will apply condition~\eqref{plm} with the coordinate system  $\varphi_p=(y,z)$ built in Lemma~\ref{cp}.
Fix $R$ small enough.
 By definition,
$$
\nu(\Phi_p(y,\delta_\lambda z))^2=\sum_{\cI\in{\cal F}} \omega(\cX_{\cI})({\Phi_p(y,\delta_\lambda z)})^2,
$$
where $\cF$ is the family of $n$-tuples of multi-indices $\cI=(I_1,\dots, I_n)$ such that $\sum_j|I_j|=Q_\Reg$.
Since $p$ belongs to the singular set, for any family $\cI\in{\cal F}$, the vectors $X_{I_1}(p),\dots, X_{I_n}(p)$ are linearly dependent. Similarly, for every $y$ near $0$, the vectors $X_{I_1}(\Phi_p(y,0)),\dots, X_{I_n}(\Phi_p(y,0))$ are linearly dependent. As a consequence, $\nu({\Phi_p(y,0)})\equiv0$.

 Given   $y \in \{ \|y\| \leq R \}$, denote by $\varrho(y)$ the greatest number such that there exists a constant $C_y>0$ satisfying
 $$
 \nu(\Phi_p(y,\delta_\lambda z))\leq C_y\lambda^{\varrho(y)},
$$
for every $\lambda \leq 1$ and $\|z\|=1$.
Thanks to \r{pro},
$$
\Phi_p(y,z)=\Phi_q(0,z),
$$
where $q=\Phi_p(y,0)$. Hence $\Phi_p(y,\delta_\lambda z)=\Phi_q(0,\delta_\lambda z)=\Phi_q(\delta_\lambda(0,z))
$,
and $\varrho(y)$ is simply the nonholonomic order of $\nu$ at $\Phi_p(y,0)$ and, in particular,  $\varrho(y)\geq \varrho_{\min}$ (we omit the dependance w.r.t.\ $(p,N)$). Assume $\varrho_{\min}<\infty$.
Since $q\mapsto\nu^2(q)$ is smooth, there exists $C>0$ such that
 $$
 \max_{ \|y\| \leq R ,\|z\|_p=1}\nu(\Phi_p(y,\delta_\lambda z))\leq C \lambda^{\varrho_{\min}}.
 $$
Thus
 \begin{equation*}
   \int_0^{R}\lambda^{Q(p)-Q_N-1}\,\left(  \int_{\|y\|\leq {R}} \left( \int_{\{\|z\|=1\}} \frac{d\tilde{\sigma}(z)}{\nu(\Phi(y,\delta_\lambda z))} \right)dy  \right) \,d\lambda \geq C \int_0^1\lambda^{Q(p)-Q_N-1-\varrho_{\min}}d\lambda.
 \end{equation*}
 Therefore, applying \eqref{plm} in Lemma~\ref{tech}, condition $\varrho_{\min}\geq Q(p)-Q_N$
gives the non integrability.
If $\varrho_{\min}=+\infty$, then we directly infer the conclusion, as the inequality
$$
 \int_0^{R}\lambda^{Q(p)-Q_N-1}\,\left(  \int_{\|y\|\leq {R}} \left( \int_{\{\|z\|=1\}} \frac{d\tilde{\sigma}(z)}{\nu(\Phi(y,\delta_\lambda z))} \right)dy  \right) \,d\lambda   \geq C\int_0^1\lambda^{Q(p)-Q_N-1-\alpha}d\lambda
$$
holds for every positive integer $\alpha$.
 %Finiteness of the integral in the right-hand side of \r{ddd} essentially depends upon the behavior of $\omega_{\Phi_p(y,z)}(X_{I_1}(\Phi_p(y,z)),\dots, X_{I_n}(\Phi_p(y,z)))$ near $(0,0)$.
\end{proof}

\brem
\label{ggggg}
For every $q=\Phi_p(y,0)\in N$ and every $\cI\in {\cal F}$, $\ord_q(\omega (\cX_\cI))\geq Q(p)-Q_\reg$ and thus
$$
\varrho_{\min}(p,N) \geq Q(p)-Q_\reg.
$$
To see this, let $\widehat X_1,\dots, \widehat X_m$ be a nilpotent approximation at $q\in N$ of the generating family $X_1,\dots, X_m$.
Denote by $\widehat \cX_\cI$ the family $\widehat X_{I_1},\dots, \widehat X_{I_n}$, where $\cI=(I_1,\dots, I_n)$ and $\widehat X_{I_j}$ is the Lie bracket between $\widehat X_1,\dots, \widehat X_m$.
Then
$$
\omega (\cX_\cI)(\Phi_p(y,\delta_\lambda z))=\omega (\cX_\cI)(\Phi_q(0,\delta_\lambda z))=\lambda^{Q(q)-Q_\reg}\left(\omega (\widehat \cX_\cI)(\Phi_q(0,\delta_\lambda z))+\lambda b_{\cI,q}(\lambda,z)\right),
$$
where $ b_{\cI,q}$ is a smooth function. Finally, the equisingularity of $N$ ensures that $Q(q)=Q(p)$.
\erem
Proposition~\ref{fin}, jointly with Remark~\ref{ggggg} allows to deduce the following criterion for $\ss^{Q_\Reg}\llcorner_\Reg$ not to be Radon.
\bc
\label{th:s>r}
Assume (A). If $\dim_H\Sing\geq \dim_H\Reg$, then  $\ss^{Q_\Reg}\llcorner_\Reg$ is not a Radon measure.
\ec
\brem
 Denoting by $\Sing=\cup\Sing_j$ the stratification of $\Sing$  by equisingular submanifolds,   condition $\dim_H\Sing\geq \dim_H\Reg$ is equivalent to the existence of a stratum $\Sing_j$ such that $Q_{\Sing_j}\geq Q_\Reg$. Indeed, thanks to Theorem~\ref{mainth},
$$
\dim_H\Sing=\max_j\dim_H\Sing_j=\max_jQ_{\Sing_j}.
$$
Therefore, such condition encodes a metric information, in the sense that the Hausdorff dimension is  defined only through  the metric structure of the manifold, and it bears an algebraic interpretation (and hence a direct way to be tested) in terms of $Q_\Reg$ and the $Q_{\Sing_j}$.
\erem

\begin{proof}
Under the assumptions, there exists an equisingular submanifold $N\subset\Sing$ such that $\dim_H N=\dim_H\Sing$. On the other hand, $Q_N=\dim_H N$ by Theorem~\ref{mainth}. Using Remark~\ref{ggggg}, for every $p\in N$ we have
$$
\varrho_{\min}(p,N) \geq Q(p)-Q_\Reg \geq Q(p) - Q_N,
$$
hence the assumption of Proposition~\ref{fin} is satisfied and $\ss^{Q_\Reg}\llcorner_\Reg$ is not locally integrable around $p$.
\end{proof}

%\bc\label{cc1}
%Under assumption $(A)$, if $\dim_H N \geq \dim_H\Reg$ then  $\frac{dS^{Q_\reg}}{d\mu}$ is not locally integrable around $p$.
%\ec
%
%\bc
%Under assumption $(A)$, if $\mathrm{codim}\,\Sing=1$ then  $\frac{dS^{Q_\reg}}{d\mu}$ is not locally integrable around $p$.
%\ec
%\begin{proof}
%If $\mathrm{codim}\,\Sing=1$, then coordinates $\varphi_p$ allow to identify $N$ with $\{(y,z)\in\R^{n-1}\times\R\mid z=0\}$. Hence, there exists $C>0$ such that, for every $y\in\Omega_1$, $z\in\Omega_2\cap \{\|z\|_p=1\}$ and   $\cI\in{\cal F}$,
%$$
%\omega(\cX_\cI)(\Phi_p(y,\delta_\lambda z)\leq C \lambda^{w_n(p)}.
%$$
%Hence $\varrho_{\min}\geq w_n(p)$. On the other hand, for every $i=1,\dots, n-1$, $w_i=w_i^N$. Therefore the condition of Theorem~\ref{fin} is fulfilled and the proof is complete.
%\end{proof}

As concerns  sufficient conditions for local integrability, we use the same approach as in Proposition~\ref{fin} and we look for uniform lower bounds on $\nu$.

For a point $p\in\Sing$ and an equisingular submanifold $N$ containing $p$, we define
$$
\varrho_{\max}(p,N) = \inf\left\{s\geq 0 \, : \, \inf_{y\in\Omega_1, \|z\|_p=1}\left\{\liminf_{\lambda\to 0}\frac{\nu(\Phi_p(y,\delta_\lambda z))}{\lambda^s}\right\}>0\right\},
$$
where $\Phi_p$ and $\Omega_1$ are constructed in Lemma~\ref{cp}.
By construction, $\varrho_{\min}(p,N) \leq \varrho_{\max}(p,N)$.

\brem
The number $\varrho_{\max}=\varrho_{\max}(p,N) $ is defined precisely to get a uniform lower bound on $\nu$. More precisely, if $\varrho_{\max}<\infty$, there exists $C>0$ such that,
for every $(y,z)\in\Omega_1\times\{\|z\|_p=1\}$ and $\lambda\in[0,1]$, there holds
\begin{equation}
\label{dfghj}
\nu(\Phi_p(y,\delta_\lambda z))\geq C\lambda^{\varrho_{\max}}.
\end{equation}
Unlike $\varrho_{\min}$, in general $\varrho_{\max}$
does not bear an algebraic interpretation as a nonholonomic order. Nevertheless, it can be characterized in a more intrinsic way as
$$
\varrho_{\max}=\inf\left\{s\geq 0 \, : \, \inf_{\bar q\in B(p,\eps)\cap N}\left\{\liminf_{q\to\bar q, q\notin \Sing}\frac{\nu(q)}{d(q,N)^s}\right\}>0\right\}.
$$
\erem
As a direct consequence of \r{dfghj} and Lemma~\ref{tech}, we deduce the following fact.
\bp\label{finito}
Assume (A). Let $p\in\Sing$ and let $N$ be an equisingular submanifold containing $p$.
If $\varrho_{\max}(p,N)<Q(p)-Q_N$ then $\ss^{Q_\Reg}\llcorner_{\Reg}$ is locally $\mu$-integrable around $p$.
\ep
Notice that the hypothesis   $\varrho_{\max}(p,N) <Q(p)-Q_N$ implies in particular that $\varrho_{\max}<\infty$, and thus that in a neighborhood of $p$ the set $\Sing$ coincides with $N$.

In general, condition $\varrho_{\max}<Q(p)-Q_N$ is not sharp, in the sense that there may be cases in which the condition fails and $\ss^{Q_\Reg}\llcorner_{\Reg}$ is locally integrable (see Example~\ref{ex:last}).  %% Exemple dans la section applications
Actually, the study of the generic case (see the proof of Proposition~\ref{gensmo}) shows that the integrability does not necessarily
 require a bound on $\varrho_{\max}$. Nevertheless, when $\nu$ is homogeneous, we have $\varrho_{\max}=\varrho_{\min}$ and thus Proposition~\ref{fin} together with Proposition~\ref{finito} provides a characterization of integrability.  Namely, if $\nu$ is homogeneous, then $\ss^{Q_\Reg}_\Reg$ is locally integrable at $p\in N$ if and only if $\varrho_{\min}<Q(p)-Q_N$.

\subsubsection{Conditions on usual orders}

Consider a singular point $p\in\Sing$ and  an equisingular submanifold $N \subset \Sing$ containing $p$. Define
$$
\ee_{\min}(p,N)=\lim_{\eps\to 0}\min\{\mathrm{ord}^{\mathrm{diff}}_{q}(\omega(\cX_\cI))\mid {q\in N\cap B(p,\eps)}, \ \cI\in\cF \},
$$
where $\mathrm{ord}^{\mathrm{diff}}_{q}$ is the usual notion of order of the differential calculus. In coordinates $x$, the order $\mathrm{ord}^{\mathrm{diff}}_{q} h$ of a smooth function $h$ is the biggest integer $k$ such that all partial derivatives of $h$ of order smaller than $k$ are zero. Equivalently, it is the biggest integer such that locally $h(x) = O(|x|^k)$. Obviously,
$$
\ee_{\min}(p,N) \leq \varrho_{\min}(p,N).
$$

\bp
\label{le:domega=0}
Assume (A). Let $p\in\Sing$ and let $N$ be an equisingular submanifold of codimension $n-k\geq 1$. If $\ee_{\min}(p,N) \geq n-k$,
then $\frac{d\ss^{Q_\Reg}\llcorner_{\Reg}}{d\mu}$ is not locally $\mu$-integrable around $p$.
\ep

\begin{proof}
The proof is similar to the one of Proposition~\ref{fin}, replacing $\ord_q$ by $\mathrm{ord}^{\mathrm{diff}}_{q}$ and \eqref{plm} by \eqref{eq:eucl}.
\end{proof}

\bc
\label{tre}
Assume (A). If $\Sing$ contains a submanifold  of codimension 1, then $\ss^{Q_\Reg}\llcorner_\Reg$ is not a Radon measure.
\ec

\begin{proof}
 By assumption (A), $\Sing$ is stratified by equisingular submanifolds. Therefore  there exists an equisingular submanifold $N\subset \Sing$ of codimension one.
   Let $p\in N$.  Take a coordinate system $(y_1,\dots, y_{n-1},z)$ that allows to identify $N$ with $\{z=0\}$.
   % Then coordinates $\varphi_p=(y,z_n)$  of Lemma~\ref{tech}  allow to identify $N$ with $\{z_n=0\}$.
For every $\cI\in\cF$, since $\omega(\cX_{\cI})(\Phi_p(y,z))\equiv 0$, it results from  Malgrange Preparation Theorem that there exists $C>0$ such that
$$
|\omega(\cX_{\cI})(\Phi_p(y,z))|\leq C|z|,
$$
  for every $y$ in a neighborhood of $0$, and thus $\ee_{\min}(p,N) \geq 1=n-(n-1)$. Proposition~\ref{le:domega=0} applies.
\end{proof}

A sufficient condition of integrability similar to Proposition~\ref{finito} can also be derived.

%This condition above is almost a necessary and sufficient condition.
%
%\bp
%\label{le:domega=0}
%Assume (A). Let $p\in\Sing$ and let $N$ be an equisingular submanifold of codimension greater than one containing $p$. If there exists $\cI=(I_1,\dots, I_n)$ with $\sum_i|I_i|=Q_\reg$ such that
%$$
%d(\omega(\cX_\cI))_q  \neq 0,
%$$
%then $\frac{d\ss^{Q_\Reg}\llcorner_{\Reg}}{d\mu}$ is locally integrable around $p$.
%\ep

\section{Hausdorff measures on submanifolds}\label{seqsm}
 Thanks to the analysis  in Section~\ref{se:stratif}, under assumption (A) the study of the singular part $\volh\llcorner_\Sing$ reduces to the study of the restriction of this measure to every equisingular stratum $\Sing_i\subset\Sing$. Since $\volh\llcorner_{\Sing_i}=0$ or $+\infty$ if $\dim_H \Sing_i \neq \dim_H M$, we are actually led to study $\ss^{\dim_H N}\llcorner_{N}$ for an equisingular submanifold $N$. This is the object of this section.

We  first describe the algebraic structure of an equisingular manifold $N$, and then give  in Theorem~\ref{mainth} a complete description of $\ss^{\dim_H N}\llcorner_{N}$. Finally we give an estimate of the Radon-Nikodym derivative of $\ss^{\dim_H N}\llcorner_{N}$ with respect to a smooth  volume  on $N$.

%%%%%%%%%%%%%%%%%%%%%%%%%%%%%%%%%%%%%%%%%%%%%%%%%%%%

\subsection{Algebraic structure associated with equisingular submanifolds}

 Recall that the metric tangent cone to $(M,d)$ at any point $p$ exists and it is isometric to $(T_pM,\widehat d_p)$, where  $\widehat d_p$ denotes the sub-Riemannian distance associated with a nilpotent approximation at $p$ (see \cite{bellaiche}).

The following proposition shows the relevance of equisingular submanifolds as particular subsets of $(M,d)$ for which a metric tangent cone  exists. Such metric space is isometrically embedded in a metric tangent cone to the whole $M$ at the point.

\bp\label{densities}
Let $N\subset M$ be an  equisingular submanifold of $M$ of dimension $k$ and $K\subset N$ be a compact set.
The following properties hold.
\bi
\item[$\mathrm{(i)}$]  There exists a metric tangent cone to
  $(N,d|_N)$ at every $p \in N$ and it is isometric to $(T_pN,\widehat
  d_p|_{T_pN})$.

\item[$\mathrm{(ii)}$]  For every $p\in N$, the graded vector space
 $$
 \mathfrak{gr}_p^N(\bD):= \bigoplus_{i=1}^{r(p)}(\bD^{i}_p\cap T_pN)/(\bD^{i-1}_p\cap T_pN)
 $$
 is a nilpotent Lie algebra whose associated Lie group $\mathrm{Gr}_p^N(\bD)$ is diffeomorphic to $T_pN$.

 \iii[$\mathrm{(iii)}$] Denote by $\widehat B_p$ the unit ball for $\widehat d_p$. The function $p\mapsto \diam\,_{\widehat d_p}(T_pN\cap \widehat B_p)$ is continuous on $N$ and
  \begin{equation}
  \label{limdiam}
  \sup_{p\in K}\left|\frac{1}{\eps}\diam(N\cap B(p,\eps))-\diam\,_{\widehat d_p}(T_pN\cap \widehat B_p)\right|\xrightarrow[\eps\to0^+]{} 0;
 \end{equation}
 \iii[$\mathrm{(iv)}$]  Assume $N$ is oriented and let $\mu$ be a smooth volume on $N$ associated with a $k$-form $\varpi\in\bigwedge^k N$. Then $\varpi$ induces canonically a  left-invariant $k$-form $\wp$ on
 $\mathrm{Gr}_p^N(\bD)$  and a corresponding smooth volume $\mup$. Moreover, the function $p\mapsto \mup (T_pN\cap \widehat B_p)$ is continuous on $N$, and
\begin{equation}
\label{limmu}
\sup_{p\in K}\left|\frac{1}{\eps^{Q_N}}\mu(N\cap B(p,\eps)) -  \mup (T_pN\cap \widehat B_p)\right|\xrightarrow[\eps\to0^+]{} 0.
\end{equation}
\ei
\ep
\brem
When $N$ is an open submanifold of $M$, assuming $N$
equisingular is equivalent to saying that $N$ contains only regular
points. In that case,  most of Proposition~\ref{densities} is well-known: point (i)
follows by the fact that  the nilpotent approximation is a metric
tangent cone; point (ii) says that the tangent cone shares a group
structure; point (iii) is trivial since $\diam\,_{\widehat d_p}(\widehat B_p)=2$ and $\diam(B(p,\eps)) = 2 \eps$ for small $\eps$; and the continuity of $p\mapsto \mup (\widehat B_p)$ in (iv) has been
remarked in \cite{balu}. Only the uniform convergence \r{limmu} is new in this case.
\erem

\begin{proof}
Note first that since the result is of local nature, it is sufficient
that we prove it on a small neighborhood $U \cap N$ of a point
$p_0 \in N$. It results from Lemma~\ref{cp} that, for every $p$ in a such a neighborhood, there exists a
coordinate system
$\varphi_p:U_p\to \R^n$ on a neighborhood $U_p \subset M$ of $p$, such that
 $\varphi_p$ are privileged coordinates at $p$,  $p \mapsto \varphi_p$ is smooth on $U \cap N$, and $N$ is rectified in coordinates $\varphi_p$, that is $\varphi_p(N \cap U_p) \subset \{x\in\R^n\mid x_{k+1}=\dots=x_n=0\}$.\smallskip

%\red{here we made some confusion with $\varphi_p$ and $\varphi_p^{-1}$. I fix that, keeping in mind that the argument of $\varphi_p$ is a point of the manifold whereas the result gives a point in $\R^n$.}

\emph{Proof of (i) and (ii).} Coordinates $\varphi_p$ allow to identify $(T_pM,\widehat d_p)$ with $(\R^n,\widehat d_p)$. From \cite[Theorem~7.32]{bellaiche} we have the following estimate, which is instrumental in the proof below. There exists two positive constants $\eps_p$, $C_p$ and an integer $r$ such that, if $x,y \in \R^n$ satisfy $M_p(x,y):=\max (\widehat d_p(0,x) ,\widehat d_p(0,y)) <\eps_p$, then
\begin{equation}
\label{eq:70}
-C_p M_p(x,y) d(\varphi_p^{-1}(x),\varphi_p^{-1}(y))^{1/r} \leq d(\varphi_p^{-1}(x),\varphi_p^{-1}(y)) - \widehat d_p(x,y) \leq C_p M_p(x,y) \widehat d_p(x,y)^{1/r}.
\end{equation}
Moreover, as noticed in \cite[Sect.\ 2.2.2]{Jea-2014}, since $N$ is equisingular the functions $p \mapsto \eps_p$ and $p \mapsto C_p$ are continuous on $N$ and $r$ does not depend on $p \in N$ ($r$ is the maximum of the weights $w_1(p), \dots, w_n(p)$).

Now, using $\varphi_p$ we identify locally $M$ with $T_pM\simeq\R^n$ and $N$ with  $T_pN \simeq \R^k\times\{0\}$.
Therefore, whenever $q_1,q_2\in U_p\cap N$ we have
$$
\widehat d_p(\varphi_p^{-1}(q_1),\varphi_p^{-1}(q_2))=\widehat d_p|_{T_pN}(\varphi_p^{-1}(q_1),\varphi_p^{-1}(q_2)),
$$
and obviously $d(q_1,q_2)= d|_{N}(q_1,q_2)$. Hence \eqref{eq:70} holds when we restrict $d$ to $N$ and
$\widehat d$ to $T_pN$. This allows to conclude~\cite[Proposition~8.12]{montgomery} that a metric tangent
cone to $(N,d|_N)$ at $p$ exists and it is isometric to
$(T_pN,\widehat d_p|_{T_pN})$.
The algebraic structure of $\mathfrak{gr}_p^N(\bD)$ and the  fact that
$\mathrm{Gr}_p^N(\bD)$ is diffeomorphic to $\R^k$ are obtained in exactly the same way as at regular point, see~\cite[Sect.\ 4.4]{montgomery}.
Thus (i) and (ii) are proved.\smallskip

\emph{Proof of (iii).}
Let $w_i(p)$ be the weights at $p$ labeled according to the coordinates $\varphi_p$. Note that the dilations $\delta_s: x \mapsto (s^{w_1(p)}x_1, \dots, s^{w_n(p)}x_n)$, $s>0$, do not depend on $p\in N$ as $N$ is  equisingular. Thus the homogeneity of $\widehat d_p$ with respect to $\delta_s$ and estimate~\eqref{eq:70} imply that the function $(p,x,y) \mapsto  \widehat d_p(x,y)$ is continuous on $N \times \R^n \times \R^n$. As a consequence,
$$
\diam\,_{\widehat d_p}(T_pN\cap \widehat B_p) = \sup \left\{ \widehat d_p(x,y) \mid x,y \in \R^k, \ \widehat d_p(0,x) \hbox{ and } \widehat d_p(0,y) \leq 1 \right\}
$$
is an upper semi-continuous function of $p$. Let us prove that is also lower semi-continuous. Fix $p\in N$ and let $(x,y) \in \R^k \times \R^k$ such that $\diam\,_{\widehat d_p}(T_pN\cap \widehat B_p) = \widehat d_p(x,y)$. For $q \in N$ we set
$$
\alpha (q) = \min \left( \frac{\widehat d_p(0,x)}{\widehat d_q(0,x)}, \frac{\widehat d_p(0,y)}{\widehat d_q(0,y)} \right).
$$
By construction we have $\widehat d_q(0,\delta_{\alpha(q)}x)$ and $\widehat d_q(0,\delta_{\alpha(q)}y) \leq 1$, hence
$$
\diam\,_{\widehat d_q}(T_qN\cap \widehat B_q) \geq \widehat d_q(\delta_{\alpha(q)}x,\delta_{\alpha(q)}y) = \alpha(q) \widehat d_q (x,y).
$$
When $q \to p$, the continuity of $q \mapsto  \widehat d_q(x,y)$ implies that $\alpha (q) \to 1$ and $\widehat d_q (x,y) \to \widehat d_p (x,y)$. As a result,
$$
\liminf_{q \to p} \diam\,_{\widehat d_q}(T_qN\cap \widehat B_q) \geq  \widehat d_p (x,y)=\diam\,_{\widehat d_p}(T_pN\cap \widehat B_p),
$$
that is, the function $p \mapsto \diam\,_{\widehat d_p}(T_pN\cap \widehat B_p)$ is lower semi-continuous and then continuous.\smallskip

To prove \eqref{limdiam}, we write
$$
\diam (N \cap B(p,\eps)) = \sup \left\{ d(\varphi_p^{-1}(x),\varphi_p^{-1}(y)) \ \mid \ x,y \in \varphi_p \left( B(p,\eps) \right) \cap \R^k \right\}.
$$
From the continuity of the constants in \eqref{eq:70} we deduce the existence of a constant $C>0$ and a function $\rho:\R \to \R$, $\rho(\eps) \to 0$ as $\eps \to 0$, which satisfy the following property: for every $p\in K$,
\begin{equation}
\label{eq:boxunif}
\bbrr (0, \eps (1-\rho(\eps))) \subset \varphi_p \left( B(p,\eps) \right) \subset
\bbrr (0, \eps (1+\rho(\eps))),
\end{equation}
and, if $x,y$ belong to $\bbrr(0, \eps (1+\rho(\eps)))$, then
\begin{equation*}
\label{eq:dhd}
| d(\varphi_p^{-1}(x),\varphi_p^{-1}(y)) - \widehat d_p(x,y) | \leq C (\eps (1+\rho(\eps)))^{1+1/r}.
\end{equation*}
As a consequence, the diameter of $N \cap B(p,\eps)$ satisfies
\begin{multline*}
    \diam\,_{\widehat d_p}\left(T_pN \cap\bbrr (0, \eps (1-\rho(\eps)))\right) - C (\eps (1+\rho(\eps)))^{1+1/r} \  \leq \ \diam (N \cap B(p,\eps))   \\
    \leq \ \diam\,_{\widehat d_p}\left(T_pN \cap\bbrr (0, \eps (1+\rho(\eps)))\right) + C (\eps (1+\rho(\eps)))^{1+1/r},
\end{multline*}
for every $p \in K$. From the homogeneity of $\widehat d_p$,
$$
\diam\,_{\widehat d_p}\left(T_pN \cap \bbrr (0, \eps (1 \pm \rho(\eps)))\right) = \eps (1\pm \rho(\eps)) \diam\,_{\widehat d_p}\left(T_pN \cap \widehat B_p\right),
$$
and we obtain \eqref{limdiam} from the inequalities above. Thus point (iii) is proved.\smallskip

\emph{Proof of (iv).}  By  (ii)  there  exists a canonical isomorphism $\sigma$ between
 $\bigwedge^k(T^*_pN)$ and $\bigwedge^k(\mathfrak{gr}^N_p(\bD)^*)$ (see the construction in \cite[Section~10.5]{montgomery}). Let
$\sigma (\varpi_p)$ be the image under such isomorphism of the value $\varpi_p$ of $\varpi$ at $p$. Then $\wp$
is defined as the left-invariant $k$-form on $\mathrm{Gr}_p^N(\bD)$ which coincides
with $\sigma (\varpi_p)$ at the origin.

 The next step is to show that the function $p\mapsto \mup (T_pN\cap \widehat B_p)$ is continuous on $N$. In coordinates $\varphi_p$, we identify $T_pN$ and $\mathrm{Gr}_p^N(\bD)$ with $\R^k \times \{0\} \subset \R^n$. Through this identification, $\sigma (\varpi_p)$ coincides with $ \varpi_p$ and may be  written as  $\varpi_p(\partial_{x_1},\dots,\partial_{x_k}) (dx_1 \wedge \cdots \wedge dx_k)_p$.
As a consequence
$$
\wp =\varpi_p(\partial_{x_1},\dots,\partial_{x_k}) \, dx_1 \wedge \cdots \wedge dx_k \quad \hbox{and} \quad
\mup= \varpi_p(\partial_{x_1},\dots,\partial_{x_k}) \Leb{k} ,
$$
where $\Leb{k}$ is the Lebesgue measure on $\R^k$.

 Continuity of $p\mapsto \mup (T_pN\cap \widehat B_p)$ on $N$ reduces to continuity of  $p\mapsto \Leb{k} (T_pN\cap \widehat B_p)$ on $N$.  As for the proof of the continuity of $p \mapsto \diam\,_{\widehat d_p}(T_pN\cap \widehat B_p)$, the  crucial point  is the homogeneity of $\widehat d_p$. Fix $p\in N$ and define the functions $\alpha, \beta : N \mapsto\R$ as follows. For $q \in N$,
$$
\alpha (q) = \max \{ \widehat d_q (0,x) \ \mid \ x \in \R^k, \ \widehat d_p (0,x) =1\}, \qquad \beta (q) = \min \{ \widehat d_q (0,x) \ \mid \ x \in \R^k, \ \widehat d_p (0,x) =1\}.
$$
The continuity of $q \mapsto  \widehat d_q$ implies that $\alpha$  and $\beta$  are continuous. Moreover, by homogeneity of the distances $\widehat d_q$, we have $\delta_{\beta(q)}\widehat B_q\, \cap \,\R^k \subset \widehat B_p \,\cap \,\R^k \subset \delta_{\alpha(q)}\widehat B_q \,\cap \,\R^k$. Recall that, for $x \in \R^k$  $\delta_s x = (s^{w_1(p)}x_1, \dots, s^{w_k(p)}x_k)$  and that $Q_N = \sum_{i=1}^k w_i(q)$ for every $q\in N$ (see remark~\ref{re:QN}).
Hence
$$
\frac{1}{\alpha(q)^{Q_N}} \Leb{k} (\widehat B_p \cap \R^k) \leq \Leb{k} (\widehat B_q \cap \R^k) \leq \frac{1}{\beta(q)^{Q_N}} \Leb{k} (\widehat B_p \cap \R^k).
$$
Since  $\alpha$ and $\beta$ are continuous  and converge to $1$ as $q\to p$,   $\Leb{k} (\widehat B_q \cap \R^k)$ converges to $\Leb{k} (\widehat B_p \cap \R^k)$ as $q \to p$. This proves the continuity of $p\mapsto \Leb{k} (T_pN\cap \widehat B_p)$ and so the one of $p\mapsto \mup (T_pN\cap \widehat B_p)$.\smallskip

It remains to prove \eqref{limmu}. Let us write the measure $\mu$ in coordinates $\varphi_p$,
$$
\mu(N\cap B(p,\eps)) = \int_{\varphi_p(N\cap B(p,\eps))} {\varphi_p}_*\varpi(\partial_{x_1},\dots,\partial_{x_k})(x) \, dx_1 \wedge \cdots \wedge dx_k .
$$
Using \eqref{eq:boxunif} and the smoothness of the function $(p,x) \mapsto {\varphi_p}_*\varpi(\partial_{x_1},\dots,\partial_{x_k})(x)$, we obtain the following inequalities,
\begin{multline*}
\int_{T_pN \cap \bbrr (0, \eps (1-\rho(\eps)))} \Big(\varpi_p(\partial_{x_1},\dots,\partial_{x_k})- \rho(\eps) \Big) \, dx_1 \wedge \cdots \wedge dx_k \ \leq \ \mu(N\cap B(p,\eps)) \\
\leq \ \int_{T_pN \cap\bbrr (0, \eps (1+\rho(\eps)))} \Big(\varpi_p(\partial_{x_1},\dots,\partial_{x_k}) + \rho(\eps) \Big) \, dx_1 \wedge \cdots \wedge dx_k.
\end{multline*}
The conclusion follows from
\begin{equation}
\label{eq:mup}
\mup (T_pN\cap \widehat B_p) = \frac{1}{\eps^{Q_N}} \mup (T_pN\cap \bbrr (0, \eps )) = \frac{1}{\eps^{Q_N}}\int_{T_pN\cap \bbrr (0, \eps )} \varpi_p(\partial_{x_1},\dots,\partial_{x_k}) \, dx_1 \wedge \cdots \wedge dx_k .
\end{equation}
\end{proof}

\subsection{Hausdorff volume of equisingular submanifolds}\label{hjkl}

 Recall that $\volh^N=\ss^{\dim_HN}_N$ (see Section~\ref{hhss}).

\bt\label{mainth}
Let $N\subset M$ be an oriented equisingular submanifold and let $\mu$ be a smooth volume on $N$. Then
\begin{enumerate}
\iii $\dim_{H}N=\qn$ and $\volh^N=\ss^{\qn}_N$;
\iii $\volh^N$ is a Radon measure on $N$, i.e, $\volh^N(K)<\infty$ for every compact set $K\subset N$;
\iii $\volh^N\ll\mu$ and $\mu\ll\volh^N$;
\iii the Radon-Nikodym derivative of $\volh^N$ with respect to $\mu$ is the density $\lim_{\eps\to 0}\frac{\volh^N(N\cap B(p,\eps))}{\mu(N \cap
  B(p,\eps))}$, whose value is
    \begin{equation}\label{abscont}
\lim_{\eps\to 0}\frac{\volh^N(N\cap B(p,\eps))}{\mu(N \cap
  B(p,\eps))}=\frac{\diam_{\widehat d_p}(T_pN\cap \widehat
  B_p)^{Q_N}}{\mup(T_pN\cap \widehat B_p)},~~ \forall\, p\in N.
\end{equation}
%where $\diam_{\widehat d_p}$ denotes the diameter with respect to the distance $\widehat d_p$.
\end{enumerate}
\et

\brem
When $M$ is equiregular and connected, applying Theorem~\ref{mainth} to $N=M$ we deduce Theorem~\ref{mainthreg}.
\erem

\brem
Point 4 together with points (iii) and (iv) of Proposition~\ref{densities} shows that the Radon-Nikodym derivative $\frac{d\volh^N}{d\mu}(q)$ is continuous on $N$.
\erem

Before starting the proof of the theorem, we need to establish the following simple lemma.

\bl
\label{le:ss_vs_mu}
Let $N$ and $\mu$ be as in Theorem~\ref{mainth}. Let $p\in N$. Assume   there exists positive constants $\eps_0$ and $\mu_+ > \mu_-$ such that, for every $\eps<\eps_0$ and every point $q \in N\cap B(p,\eps_0)$, there holds
\begin{equation}
\label{eq:mu+-}
    \mu_- \diam(N\cap B(q,\eps))^{Q_N} \leq \mu (N\cap B(q,\eps)) \leq \mu_+ \diam( N\cap B(q,\eps))^{Q_N}.
\end{equation}
Then, for every $\eps<\eps_0$,
\begin{equation}\label{lem2}
\frac{\mu (N\cap B(p,\eps))}{\mu_+}  \leq \ss_N^{Q_N}(B(p,\eps)) \leq  \frac{\mu (N\cap B(p,\eps))}{\mu_-}.
\end{equation}
\el

\begin{proof}
Let $\bigcup_i B(q_i,r_i)$ be a covering of $N\cap B(p,\eps)$ with
balls centered at points in $N$  of radius smaller than $\delta <\eps_0$. If $\delta$ is small enough,
every $q_i$ belongs to $N\cap B(p,\eps_0)$ and, using~\eqref{eq:mu+-}, there holds
$$
 \mu(N\cap B(p,\eps)) \leq \sum_i \mu (N\cap B(q_i,r_i)) \leq \mu_+
\sum_i \diam (N\cap B(q_i,r_i))^{Q_N}.
$$
Hence, we have $\ss_N^{Q_N} (B(p,\eps)) \geq \frac{\mu(N\cap B(p,\eps))}{\mu_+}$.

For the other inequality, let $\eta>0$, $0<\delta <\eps_0 $ and let $\bigcup_i B(q_i,r_i)$ be a covering of $N\cap B(p,\eps)$ such that $q_i\in N\cap  B(p,\eps)$, $r_i <\delta$ and $\sum_i \mu (N\cap B(q_i,r_i)) \leq \mu(N\cap B(p,\eps)) + \eta$. Such a covering exists due to the Vitali covering lemma. Using as above~\eqref{eq:mu+-}, we obtain
$$
\mu(N\cap B(p,\eps)) + \eta \geq \sum_i \mu (N\cap B(q_i,r_i)) \geq \mu_-
\sum_i \diam(N\cap B(q_i,r_i))^{Q_N}.
$$
We then have $\ss_{N,\delta}^\qn (B(p,\eps))  \leq  \frac{N\cap \mu(B(p,\eps))}{\mu_-}  + \frac{\eta}{\mu_-} $. Letting  $\eta$ and $\delta$ tend to $0$, we get the conclusion.
\end{proof}

\begin{proof}[Proof of Theorem~\ref{mainth}]
Let $p\in N$ and fix $\delta>0$.  Set
$$
\mu_\pm(\delta)=\frac{\hat\mu^{p}(T_pN\cap\widehat B_p)}{\diam_{\widehat d_p}(T_pN\cap\widehat B_p)}\pm\delta
$$
Let $K$ be the intersection of $N$ with the closed ball of center $p$ and radius $\delta$. By Proposition~\ref{densities}, the functions $q\mapsto \diam_{\widehat d_q}(T_qN\cap \widehat B_q)$ and $q\mapsto\muq(T_qN\cap \widehat B_q)$ are uniformly continuous on $K$ and  convergences \r{limdiam} and \r{limmu} are uniform on $K$. Therefore there exists $\eps_0>0$ (depending only on  $\delta$)
such that $\eps_0<\delta$ and,  for every $\eps<\eps_0$, for every $q\in N\cap B(p,\eps)$, there holds
$$
    \mu_- (\delta)\diam(B(q,\eps)\cap N)^{Q_N} \leq \mu (B(q,\eps)\cap N) \leq \mu_+(\delta) \diam(B(q,\eps)\cap N)^{Q_N}.
$$
Applying Lemma~\ref{le:ss_vs_mu} we deduce that for every $\eps<\eps_0$ there holds
\begin{equation}\label{tutto}
\mu_-(\delta)  \leq \frac{\ss_N^{Q_N}(B(p,\eps))}{\mu (B(p,\eps)\cap N)} \leq  \mu_+(\delta).
\end{equation}
Therefore we infer that $\ss^{Q_N}_N$ is a Radon measure and, using a covering argument, that $\ss^{Q_N}$ and $\mu$ are mutually absolutely continuous, i.e., property 3.
As $\delta$ goes to $0$ we obtain that
\begin{equation}\label{rrr}
\lim_{\eps\downarrow 0}\frac{\ss_N^{Q_N}(B(p,\eps))}{\mu (B(p,\eps)\cap N)}=\frac{\diam_{\widehat d_p}(T_pN\cap\widehat B_p)}{\hat\mu^{p}(T_pN\cap\widehat B_p)}.
\end{equation}
Since the right-hand side of \r{rrr} is positive and continuous on $N$, for $\eps>0$ small enough,
$$
0<\ss^{Q_N}(N\cap B(p,\eps))<\infty,
$$
whence $\dim_HN=Q_N$ and $\vol^N_H=\ss^{Q_N}_N$. As a consequence, \r{rrr} gives
\r{abscont}.
Finally, we apply the differentiation theorem for Radon measures \cite[Theorem~4.7]{simon} (with $X=N$, $\mu_1=\mu$ and $\mu_2=\vol^N_H$) to obtain that $\lim_{\eps\to 0}\frac{\volh^N(N\cap B(p,\eps))}{\mu(N \cap
  B(p,\eps))}$ coincides with the Radon-Nikodym derivative of $\vol_H^N$ with respect to $\mu$.
\end{proof}

%\bi
%\iii Properties (iii) and (iv) in Proposition~\ref{densities}  allows to apply Lemma~\ref{le:ss_vs_mu} with $\mu_{\pm}\to \frac{\hat \mu_q}{\diam...}$ and we obtain \r{abscont}
%\iii\red{at this point we know that $\ss^Q_N$ is Radon on $N$ so we can apply \cite{simon} to $X=N$, $\mu_2=\ss^{Q_N}_N$ and $\mu_1\mu$. Since we already know that $\ss^{Q_N}_N\ll\mu$, in the first conclusion of Simon we get directly that $\mu_2^*=0$. and we do not need the next two points. En revanche, on obtient la meme conclusion que dans Simon pour la mesure $\mu_2=\ss^{Q_\reg}\llcorner_{\Reg}$ sans que $\ss^{Q_\reg}\llcorner_\Reg$ soit Radon.}
%\iii uniform Ball-Box implies symmetric Vitali property for $\mu$ and, since Lemma~\ref{le:ss_vs_mu} (and $\ss^{Q_N}_N\ll\mu$) we also get symmetric Vitali property for $\ss^{Q_N}_N$.
%\iii We apply the second part of \cite[Theorem 4.7]{simon} and, since the set $\{q\in N\mid \lim_{\eps\to 0}\frac{\ss^{\qn}_N(B(q,\eps))}{\mu(N \cap
%  B(q,\eps))}=+\infty\}=\emptyset$, we deduce that $\lim_{\eps\to 0}\frac{\ss^{\qn}_N(B(q,\eps))}{\mu(N \cap
%  B(q,\eps))}$ is the Radon-Nikodym derivative of $\ss^{\qn}_N$ with respect to $\mu$.
%\ei

\subsection{Weak equivalent of $q\mapsto \muq(T_qN\cap \widehat B_q)$}\label{fghjk}
We end this section by stating a result which gives a weak
  equivalent of the function $q\mapsto \muq(T_qN\cap \widehat B_q)$ appearing
  in Theorem~\ref{mainth}. This is  instrumental   to
  determine whether the Radon-Nikodym derivative of $\vol_H^N$ with respect to $\mu$ is integrable or
  not. This result stems from the uniform Ball-Box Theorem,
  \cite{jean1} and \cite[Theorem~4.7]{Jea-2014}. A related formula computing explicitly  Popp's measure in equiregular manifolds has been given in \cite{br}.

\begin{proposition}
\label{th:nuq}
Let $M$ be an oriented manifold with a volume form $\omega$, and
 $N \subset M$ be an oriented submanifold  with a volume form $\varpi$. We denote by $\mu$ the associated smooth volume on $N$ and we set $k = \dim N$.
  Finally, let $X_1,\dots, X_{m}$ be a generating family for a sub-Riemannian structure on $M$.

  If $N$ is equisingular, then for any compact subset $K \subset M$ there exists a constant
$C>0$ such that, for every $q \in N \cap K$,
$$
\muq(T_qN\cap \widehat B_q) \asymp_C {\bar \nu(q)},
$$
where ${\bar \nu(q)} = \max\{ \big| \left(\varpi\wedge dX_{I_{k+1}} \wedge \cdots
  \wedge dX_{I_{n}}\right)_q (X_{I_{1}}(q),\dots,X_{I_{n}}(q)) \big| \}$, the
maximum being taken among all $n$-tuples  $(X_{I_{1}},\dots,X_{I_{n}})$
in $\arg \max \{ \omega_q(X_{I'_{1}}(q),\dots,X_{I'_{n}}(q)) \mid \
\sum_i |I'_i| = Q(q) \}$.
\end{proposition}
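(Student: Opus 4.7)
The plan is to reduce Proposition~\ref{th:nuq} to an explicit coordinate computation, combining three ingredients: the privileged coordinate system of Lemma~\ref{cp} adapted to $N$, the formula for $\mup$ derived in the proof of Proposition~\ref{densities}(iv), and the uniform Ball-Box Theorem on equisingular submanifolds (Proposition~\ref{le:unifbbox}), which guarantees that the comparison constants can be taken uniform in $q \in N \cap K$.

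First, for $p \in N\cap K$ I would apply Lemma~\ref{cp} on a neighborhood $U$ of $p$ to obtain a smooth family of privileged coordinates $q \mapsto \varphi_q$, $q \in N \cap U$, built from an adapted frame $Z_1(q), \dots, Z_n(q)$ whose first $k$ vectors span $T_qN$. Since the submodules $\bD^j$ and $\bD^j \cap TN$ are generated by iterated Lie brackets of $X_1,\dots,X_m$, one may choose $Z_i = X_{J_i}$ with $\sum_i |J_i| = Q(p)$ and $\sum_{i \leq k} |J_i| = Q_N$, both constant on $N$ by equisingularity. In these coordinates $\partial_{x_i}|_q = X_{J_i}(q)$, and from the proof of Proposition~\ref{densities}(iv), $\mup = \varpi_q(\partial_{x_1}, \dots, \partial_{x_k})\, \Leb{k}$ on $T_qN \simeq \R^k \times \{0\}$.

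Next, I would invoke the uniform Ball-Box Theorem on $N$ (Proposition~\ref{le:unifbbox}) to deduce that, in the coordinates $\varphi_q$, the set $T_qN \cap \widehat B_q$ is squeezed between two fixed Euclidean boxes in $\R^k\times\{0\}$, whose sides depend only on the (constant) weights of $N$ and on $K$. Consequently $\Leb{k}(T_qN \cap \widehat B_q) \asymp_C 1$ uniformly on $N\cap K$, and hence
$$
\mup(T_qN \cap \widehat B_q) \asymp_C \bigl|\varpi_q\bigl(X_{J_1}(q), \dots, X_{J_k}(q)\bigr)\bigr|.
$$
A direct expansion of the wedge product then yields
$$
\bigl(\varpi \wedge dX_{J_{k+1}} \wedge \cdots \wedge dX_{J_n}\bigr)_q\bigl(X_{J_1}(q), \dots, X_{J_n}(q)\bigr) = \varpi_q\bigl(X_{J_1}(q), \dots, X_{J_k}(q)\bigr),
$$
because the $1$-forms $dX_{J_j}$ are dual to $X_{J_j}$ in the adapted frame and the first $k$ vectors lie in $T_qN$. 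Since $\sum_i|J_i| = Q(q)$ and the $X_{J_i}(q)$ span $T_qM$, the tuple $(J_1,\dots,J_n)$ lies in the constraint set for $\bar\nu(q)$.

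The final step is to compare this locally chosen tuple with the actual maximum $\bar\nu(q)$, uniformly over $K$. I would argue that on $N\cap K$ the set of admissible $n$-tuples with $\sum_i|I_i| = Q(q)$ is finite and each function $q \mapsto \omega_q(\cX_\cI)$ is smooth, so that any two tuples realizing the $\arg\max$ of $|\omega_q(\cX_\cI)|$ are related by a change-of-basis matrix whose determinant is bounded above and below by constants depending only on $K$ (again a consequence of the uniform Ball-Box estimate). This yields $|\varpi_q(X_{J_1}(q), \dots, X_{J_k}(q))| \asymp_C \bar\nu(q)$ uniformly on $N\cap K$, giving the claimed equivalence. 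The main obstacle is precisely this last uniformity: the smooth local choice of brackets may fail to realize $\bar\nu(q)$ at every point, and one must combine the finiteness of admissible tuples with the uniform Ball-Box control to ensure that the $q$-dependent optimal tuple is never far from the one fixed locally.
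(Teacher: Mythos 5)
Your overall strategy (privileged coordinates adapted to $N$ via Lemma~\ref{cp}, the formula $\mup=\varpi_p(\partial_{x_1},\dots,\partial_{x_k})\Leb{k}$ from Proposition~\ref{densities}(iv), and the uniform Ball-Box Theorem to control $\Leb{k}(T_qN\cap\widehat B_q)$) is the right one, but two steps do not go through as written. First, you cannot in general choose the tangent part of the adapted frame to be iterated brackets $Z_i=X_{J_i}$: the restricted flag $\bD^j_q\cap T_qN$ is a pointwise intersection and is \emph{not} generated by brackets of the $X_i$'s. This is exactly why Lemma~\ref{cp} and the paper's proof take $Y_i=\sum_{|I_j|\leq \ell(Y_i)}Y_i^jX_{I_j}$ with smooth function coefficients. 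With brackets in place of the $Y_i$ your wedge-product identity also breaks down, since $\varpi_q$ is only defined on $T_qN$ and the $X_{J_i}(q)$, $i\leq k$, need not be tangent to $N$.

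The second and more serious gap is the one you flag yourself: comparing the determinant of your locally chosen frame with the arg-max quantity $\bar\nu(q)$. Your locally chosen tuple has no reason to realize the maximum, and the observation that two \emph{maximizing} tuples are related by a change of basis of determinant of modulus one does not bound the individual entries of that matrix, nor does it control the value of $\varpi\wedge dX_{I_{k+1}}\wedge\cdots\wedge dX_{I_n}$ on a non-maximizing frame. The paper resolves this differently: it fixes, at each $q$, a tuple $\cX=(X_{I_1},\dots,X_{I_n})$ actually realizing $\bar\nu(q)$, and compares \emph{both} $\bar\nu(q)$ and $\muq(T_qN\cap\widehat B_q)$ to the frame-independent intermediary
$$
\limsup_{\eps\to 0}\ \frac{1}{(2\eps)^{Q(q)}}\int_{B(q,\eps)}\varpi\wedge dX_{I_{k+1}}\wedge\cdots\wedge dX_{I_n},
$$
an integral over the metric ball. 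The uniform Ball-Box Theorem (Proposition~\ref{le:unifbbox}) is applied twice, once to the box of the frame $\cX$ and once to the box of the mixed frame $\cY=(Y_1,\dots,Y_k,X_{I_{k+1}},\dots,X_{I_n})$ with the $Y_i$ tangent to $N$ and normalized by condition~\eqref{eq:normYs}; that normalization is precisely what makes the constant $C$ uniform even though the optimal tuple varies with $q$. Without this (or an equivalent device), your argument does not close.
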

%\red{cambiamenti conseguenti al fatto che nella proposizione \ref{th:nuq} mettiamo la norma 2 invece che il max}
%To simplify the proof we introduce the following notation. Let $C$ be a positive constant. For two real numbers $t$ and $t'$ we write $t \asymp_C t'$ if $t'/C \leq t \leq C t'$. The statement of the proposition is then equivalent to the following one: there exists a constant
%$C>0$ such that, for every $q \in N \cap K$,
%$$
%\muq(T_qN\cap \widehat B_q) \asymp_C {\bar \nu(q)}.
%$$

\begin{remark}\label{ggg}
As a particular case of the proposition, if $N$ is an open equisingular submanifold of $M$ and
$\varpi=\omega$,
$$
\muq(\widehat B_q) \asymp_C \max \left\{
|\omega_q(X_{I'_{1}},\dots,X_{I'_{n}})| \ \mid  \ \sum_i |I'_i| = Q (q) \right\},
\quad \hbox{ for every $q \in N \cap K$}.
$$
\end{remark}

\begin{remark}
Proposition~\ref{th:nuq}, together with Theorem~\ref{mainth}, allows to give an estimate of the Radon-Nikodym derivative of $\volh^N$ with respect to $\mu$. Indeed, there exists a constant
$C>0$ such that, for every $q \in K \cap N$,
\begin{equation}\label{eq:asymhh}
 \frac{d\volh^N}{d\mu}(q) \asymp_{C}  \frac{1}{{\bar \nu(q)}} .
\end{equation}
\end{remark}

\begin{proof}
Let $q$ be a point in $N \cap K$, and $(X_{I_{1}},\dots,X_{I_{n}})$ be a $n$-tuple of brackets in $\arg \max \{ \omega_q(X_{I'_{1}}(q),\dots,X_{I'_{n}}(q)) \mid \ \sum_i |I'_i| = Q(q) \}$ such that
$$
{\bar \nu(q)} =  \left(\varpi\wedge dX_{I_{k+1}} \wedge \cdots
  \wedge dX_{I_{n}}\right)_q (X_{I_{1}}(q),\dots,X_{I_{n}}(q)).
$$
Let $x$ be the local coordinates defined by the diffeomorphism
$$
x \mapsto \exp(x_n X_{I_{n}}) \circ \cdots \circ \exp(x_1 X_{I_{1}})(q),
$$
and denote by $\mathrm{Box}_\cX^2(q,\eps)$ the set of points whose coordinates satisfy $|x_i| \leq \eps^{|I_i|}$, $i=1, \dots, n$. From Proposition~\ref{le:unifbbox} (see Appendix), for $\eps$ small enough,
$$
 \mathrm{Box}_\cX^2(q,\eps/C)  \subset B(q,\eps) \subset \mathrm{Box}_\cX^2(q,C\eps) ,
$$
where the positive constant $C$ depends only on  $\omega$, $K$, and $X_1,\dots, X_{m}$.

Let us write $\bar \nu(q)$ as a limit,
$$
\bar \nu(q) = \limsup_{\eps \to 0} \frac{1}{(2\eps)^{Q(q)}} \int_{\mathrm{Box}_\cX^2(q,\eps)} (\varpi\wedge dX_{I_{k+1}} \wedge \cdots
  \wedge dX_{I_{n}})(\partial_{x_1},\dots,\partial_{x_n}) dx_1 \dots dx_n.
$$
The inclusions above imply

\begin{equation}
\label{eq:nuq}
\bar \nu(q) \asymp_C \limsup_{\eps \to 0} \frac{1}{(2\eps)^{Q(q)}} \int_{B(q,\eps)} \varpi\wedge dX_{I_{k+1}} \wedge \cdots
  \wedge dX_{I_{n}}.
\end{equation}

On the other hand, since $N$ is equisingular, we can construct an adapted basis as in Lemma~\ref{cp}: choose vector fields $Y_1, \dots, Y_k$ defined in a neighborhood of $q$ such that    $Y_i$ is tangent to $N$,
 $Y_i \in \span \{ X_{I_j} \mid \ |I_j| \leq \ell(Y_i) \}$ and  $Y_1, \dots, Y_k$ is a
basis adapted to the flag \r{flagq}  restricted to $N$ at $q$.
Note that the vectors $X_{I_{k+1}}(q),\dots, X_{I_{n}}(q)$ in $T_qM$ are transverse to $T_qN$ since $\bar \nu(q) > 0$. Then, up to a rescaling of $Y_1, \dots, Y_k$, we can assume that the $n$-tuple  $\cY=(Y_1,\dots, Y_k,X_{I_{k+1}},\dots, X_{I_{n}})$ satisfies condition~\eqref{eq:normYs} of Proposition~\ref{le:unifbbox}. Denote by $y$ the coordinates defined by
$$
y \mapsto \exp(y_n X_{I_{n}}) \circ \cdots \circ \exp(y_{k+1} X_{I_{k+1}})  \circ \exp(y_k Y_k) \circ \cdots \circ \exp(y_{1}Y_1) (q),
$$
and by $\mathrm{Box}_\cY^2(q,\eps)$ the set of points whose coordinates satisfy $|y_i| \leq \eps^{|I_i|}$, $i=1, \dots, n$. From Proposition~\ref{le:unifbbox}, for $\eps$ small enough,
\begin{equation}
\label{eq:boxyy}
 \mathrm{Box}_\cY^2(q,\eps/C)  \subset B(q,\eps) \subset \mathrm{Box}_\cY^2(q,C\eps).
\end{equation}
Coordinates $y$ are a particular kind of coordinates $\varphi_q$ constructed in Lemma~\ref{cp}. In particular,  the submanifold $N$  lies in  the set $\{y_{k+1} = \cdots = y_n=0\}$ and, with a little abuse of notations,
\begin{equation}
\label{eq: boxy}
\mathrm{Box}_\cY^2(q,\eps/C) \cap \{y_{k+1} = \cdots = y_n=0\} \subset B(q,\eps) \cap N \subset \mathrm{Box}_\cY^2(q,C\eps) \cap \{y_{k+1} = \cdots = y_n=0\}.
\end{equation}

As in the proof of Proposition~\ref{densities}, we use coordinates $y$ to identify locally $M$ with $T_qM$ and $N$ with $T_qN$. In particular, inclusions \eqref{eq: boxy} imply
$$
\mathrm{Box}_\cY^2(q,\eps/C) \cap \R^k \subset T_q N \cap \bbrr(0, \eps ) \subset \mathrm{Box}_\cY^2(q,C\eps) \cap \R^k.
$$
Using this inclusion in~\eqref{eq:mup}, we get, up to increasing $C$,
\begin{eqnarray*}
\muq(T_qN\cap \widehat B_q)  & \asymp_C &  \frac{1}{\eps^{Q_N}}\int_{\mathrm{Box}_\cY^2(q,\eps) \cap \R^k} \varpi_q(\partial_{y_1},\dots,\partial_{y_k}) dy_1 \dots dy_k \\
  & \asymp_C  &  \varpi_q(\partial_{y_1},\dots,\partial_{y_k}).
\end{eqnarray*}
Since $\partial_{y_i}(q) = Y_i(q)$, $i=1,\dots,k$, and since $X_{I_{k+1}}(q),\dots, X_{I_{n}}(q)$ are transverse to $T_qN$, we have
\begin{eqnarray*}
\varpi_q(\partial_{y_1},\dots,\partial_{y_k})  &= & \varpi_q (Y_1(q),\dots,Y_k(q))\\
   &=& (\varpi\wedge dX_{I_{k+1}} \wedge \cdots
  \wedge dX_{I_{n}})_q(Y_1(q),\dots,Y_k(q),X_{I_{k+1}}(q),\dots, X_{I_{n}}(q)).
\end{eqnarray*}
The latter term can be written as a limit,
\begin{multline*}
(\varpi\wedge dX_{I_{k+1}} \wedge \cdots
  \wedge dX_{I_{n}})_q(Y_1(q),\dots,Y_k(q),X_{I_{k+1}}(q),\dots, X_{I_{n}}(q)) \\
 = \limsup_{\eps \to 0} \frac{1}{(2\eps)^{Q(q)}} \int_{\mathrm{Box}_\cY^2(q,\eps)} (\varpi\wedge dX_{I_{k+1}} \wedge \cdots
  \wedge dX_{I_{n}})(\partial_{y_1},\dots,\partial_{y_n}) dy_1 \dots dy_n \\
 \asymp_C \limsup_{\eps \to 0} \frac{1}{(2\eps)^{Q(q)}} \int_{B(q,\eps)} \varpi\wedge dX_{I_{k+1}} \wedge \cdots
  \wedge dX_{I_{n}}, \qquad \qquad
 \end{multline*}
 where we have used \eqref{eq:boxyy}. The conclusion then follows from \eqref{eq:nuq}.
\end{proof}

\section{Applications}\label{appli}

 In this section we use the ideas of Section~\ref{integra} to characterize local integrability of $\dSdm$ in the generic smooth case. As it turns out, this property depends on the placement of the dimension of the manifold with respect to dimensions of free Lie algebras and on the Hausdorff dimension of certain equisingular submanifolds. We end by listing a number of examples illustrating several possible cases.

\subsection{Generic smooth case}\label{gsc}

Let $M$ be a $n$-dimensional smooth oriented manifold and let $m \in \N$. We consider the set $\mathcal{U}_m$ of sub-Riemannian structures $({\bf U},\ps,f)$ of rank $m$ on $M$, i.e. such that $\mathrm{rank}\ \mathbf{U} =m$. Using local generating families we endow this set with the $\con^\infty$-Whitney topology and we say that a sub-Riemannian structure of rank $m$ is generic if it belongs to some residual subset of $\,\mathcal{U}_m$. Thus the analysis of the singular set of  generic $m$-tuples of vector fields given in \cite{ver94} provides the following description of the singular set of generic sub-Riemannian structures.

Introduce first some notations. Let $\cL$ be the free Lie algebra with $m$ generators. We use $\cL^s$ to denote the subspace generated by elements of $\cL$ of length not greater than $s$, and $\tilde n_s$ to denote the dimension of $\cL^s$.
Let $r$ be the integer such that
$$
\tilde n_{r-1}< n \leq \tilde n_r.
$$
Then the singular set $\Sing$ of a generic sub-Riemannian structure admits a locally finite stratification $\Sing=\cup_{i\in\N}\Sing_i$ by equisingular submanifolds and
$$
\min_{i\in \N}\codim\,\Sing_i=\tilde n_r-n+1,
$$
in particular, $\Sing$ is a $\mu$-negligible set (for every smooth volume $\mu$ on $M$).
Moreover, at regular points the growth vector is the maximal one, i.e., $(\tilde n_1,\dots, \tilde n_{r-1},n)$.

\bp
\label{gensmo}
Let $M$ be a $n$-dimensional smooth oriented manifold, $\mu$ a smooth volume on $M$,  and let $({\bf U},\ps,f)$ be a generic sub-Riemannian structure of rank $m$ on $M$.
\bi
\iii[(i)] If $n=\tilde n_r$ then $\dSdm$ is not locally integrable on $M$.
\iii[(ii)] If $n<\tilde n_r$ then $\dSdm$ is locally integrable near every point of strata $\Sing_i$ of minimal   codimension, i.e., such that $\codim\,\Sing_i=\tilde n_r-n+1$.
\ei
\ep
\begin{proof}
The case $n=\tilde n_r$ is a direct consequence of Corollary~\ref{tre}. Then, we consider the case
$n<\tilde n_r$. Let  $N$ be a stratum of minimal codimension, i.e., $\codim\, N=\tilde n_r-n+1\geq 2$. We  set $k=\dim N=n-(\tilde n_r-n+1)$. Let $\omega$ be a non degenerate $n$-form such that $\mu=\int\omega$.
The construction in \cite{Gershkovich1988} allows to characterize $N$ in  the following way.

Denote by $\alpha_1,\dots, \alpha_m$ a set of generators for the free Lie algebra $\cL$. Consider a sequence of multi-indices $\{I_j\}_{j\in \N}$ such that, for every $s\in\N$, the family $\alpha_{I_1},\dots, \alpha_{I_{\tilde n_s}}$ generates $\cL^s$.
Fix a point $p\in N$ and a local generating\footnote{ For instance, taking a local orthonormal frame $\sigma_1,\dots, \sigma_k$ on ${\bf U}$, set $X_i=f\circ \sigma_i$. } family $(X_1, \dots, X_m)$ of $({\bf U},\ps,f)$ near $p$. Possibly reordering multi-indices of length $r$, the vectors $X_{I_1}(p), \dots, X_{I_{n-1}}(p)$ are linearly independent. Then,  one obtains  adapted bases  at regular points near $p$ by taking $X_{I_1},\dots,X_{I_{n-1}}$ and a last vector field chosen among $X_{I_n},\dots, X_{I_{\tilde n_r}}$. There are $n-k=\tilde n_r-n+1$ such bases, which we denote by $\cX_{\cI^{k+1}}, \dots, \cX_{\cI^{n}}$ (for the notation $\cX_{\cI}$ and $\omega(\cX_{\cI})$ see Section~\ref{se:stratif}). Then the map
$$
q\mapsto (\omega(\cX_{\cI^{k+1}})(q),\dots, \omega(\cX_{\cI^{n}})(q))
$$
is a submersion at $p$ and, locally,
$$
N=\{q\mid  \omega(\cX_{\cI^{k+1}})(q)=\dots=\omega(\cX_{\cI^{n}})(q)=0 \}.
$$

Note that  the sub-Riemannian manifold  satisfies assumption (A). By Proposition~\ref{we}, it suffices to show that $1/ \nu$ is locally integrable near $p$. Set $z_i = \omega(\cX_{\cI^{i}})$, $i= k+1, \dots, n$. There exists functions $y_1,\dots, y_k$ near $p$ such that $(y,z)$ is a system of local coordinates near $p$. In these coordinates, $N$ is identified with the set $\{(y,z)\mid z=0\}$ and $\nu(z) = \sqrt{z_{k+1}^2 + \cdots + z_n^2}$. Thus $1/\nu$ is locally integrable near $p$, which ends the proof.

\end{proof}

\subsection{Examples}

In this section we present several examples where assumption (A) is satisfied and one can directly tell whether $\ss^{Q_\Reg}$ is integrable or not using criteria in Section~\ref{integra}.

Recall that a sub-Riemannian structure  $({\bf U},\ps, f)$ on   an $n$-dimensional manifold $M$ is called almost-Riemannian if the rank of ${\bf U}$ is $n$ (see \cite{ABS}).

\begin{example}[Generic almost-Riemannian structures]
\label{gars}
Let $({\bf U},\ps, f)$ be a generic almost-Riemannian structure on $M$.
At regular points the structure is Riemannian, i.e., the growth vector is simply $(n)$ and $Q_{\Reg}=n$.  In particular, $n=\tilde n_1$ and thus the first case of Proposition~\ref{gensmo} applies and $\frac{d \ss^n\llcorner_\Reg}{d \mu}$ is never locally integrable (with respect to a smooth volume) around singular points. As a consequence, $\ss^n\llcorner_\Reg$ is never a Radon measure. Also, an alternative proof of this fact comes as a consequence of Corollary~\ref{th:s>r} or Corollary~\ref{tre}.
\end{example}

We next build examples of almost-Riemannian structures which are not generic and for which $\ss^{n}$ is or fails to be a Radon measure.

\begin{example}[Non generic almost-Riemannian structures]
We consider the almost-Riemannian structure on $\R^3$ for which a global generating family  is
$$
X_1(x_1,x_2,x_3)=\partial_1,~~X_2(x_1,x_2,x_3)=\partial_2,~~X_3(x_1,x_2,x_3)=(x_1^2+x_2^2) \partial_3,
$$
and the canonical volume form $\omega=dx_1 \wedge dx_2 \wedge dx_3$.
The singular set  $\Sing$ coincides with $\{(x_1,x_2,x_3)\mid x_1=x_2=0\}$ and $Q_\Reg= 3$. The growth vector at a singular point $p$ is $(2,2,3)$, $Q(p)=5$, and the growth vector of the flag restricted to $\Sing$ is $(0,0,1)$, whence $Q_\Sing=3$. Hence the Hausdorff dimension of $\R^3$ (endowed with the sub-Riemannian distance) is $3$ and $\volh=\ss^3$.
Therefore conditions of Corollary~\ref{th:s>r} are satisfied and $\frac{d \ss^3\llcorner_\Reg}{d \mu}$ is not integrable near a singular point. In other words, small neighborhoods of singular points have infinite Hausdorff volume.

One could also obtain the conclusion by applying Proposition~\ref{le:domega=0} since the only adapted basis at regular points is $X_1,X_2,X_3$. Indeed,
$$
\nu(x_1,x_2,x_3)\asymp x_1^2+x_2^2,
$$
and $\nu$ is homogeneous of degree $2$. Note that $\varrho_{\min}(p,\Sing)=\varrho_{\max}(p,\Sing)=2=Q(p)-Q_\Sing$.

Consider now the analogue structure in higher dimension, that is, the almost-Riemannian structure on $\R^4$ for which a global generating family is
$$
X_1(x)=\partial_1,~~X_2(x)=\partial_2,~~X_3(x)=\partial_3,~~X_4(x)=(x_1^2+x_2^2+x_3^2) \partial_4,
$$
where $x=(x_1,x_2,x_3,x_4)$, and the canonical volume form $\omega=dx_1\wedge dx_2\wedge dx_3\wedge dx_4.$
The singular set is $\Sing=\{x\in\R^4\mid x_1=x_2=x_3=0\}$ and $Q_\Reg=4$. The growth vector at a singular point $p$ is $(3,3,4)$, $Q(p)=6$, and the growth vector of the flag restricted to $\Sing$ is $(0,0,1)$, whence $Q_\Sing=3$. Hence the Hausdorff dimension of $\R^4$ (endowed with the sub-Riemannian distance) is $4$ and $\volh=\ss^4$. The only adapted basis at regular points is $X_1,X_2,X_3,X_4$, which gives
$$
\nu(x)\asymp x_1^2+x_2^2+x_3^2,
$$
i.e., $\nu$ is homogeneous of degree $2$ and $\varrho_{\min}(p,\Sing)=\varrho_{\max}(p,\Sing)=2<Q(p)-Q_\Sing$.
%Since
%$$
%\int_{U\cap\Reg}\frac{dx_1dx_2dx_3dx_4}{\nu(x_1,x_2,x_3,x_4)}\asymp\int_{[-1,1]^3}\frac{dx_1dx_2dx_3}{x_1^2+x_2^2+x_3^2}\asymp\int_{-1}^1dr<\infty,
%$$
Then by Proposition~\ref{finito} $\frac{d \ss^4}{d \mu}$ is  integrable  near any singular point. Equivalently, $\ss^4$ is a Radon measure.
\end{example}

\begin{example}[the Martinet space] Recall the sub-Riemannian structure of Example~\ref{mart}, where $M=\R^3$ and a global generating family is
$$
X_1=\partial_1,\quad X_2=\partial_2+\frac{x_1^2}{2}\partial_3.
$$
The singular set is the plane $\Sing=\{x\in\R^3\mid x_1=0\}$ which is equisingular and has codimension 1. The growth vector at any singular point $p$ is $(2,2,3)$, $Q(p)=5$ and the growth vector restricted to $\Sing$ is  $(1,1,1)$, whence $Q_\Sing=4$. Since $Q_\Reg=4$, the Hausdorff dimension of the Martinet space is $4$ and the Hausdorff volume is $\volh=\ss^4$. The only adapted basis at regular points is $X_1,X_2,[X_1,X_2]$, whence
$\nu(x)\asymp |x_1|$, in particular $\nu$ is homogeneous and $\varrho_{\min}(p,\Sing)=\varrho_{\max}(p,\Sing)=1$. Either integrating directly or applying Corollary~\ref{tre} one infers that  $\frac{d \ss^4 \llcorner_\Reg}{d \mu}$   is not integrable near singular points.
\end{example}

Recall that the parameter  $\varrho_{\min}$ used to characterize non integrability admits a lower bound, namely $\varrho_{\min}\geq Q(p)-Q_\Reg$ (see Remark~\ref{ggggg}).
We next see an example showing that $\varrho_{\min}$  can be  greater than this bound.
\begin{example}
Consider the sub-Riemannian structure on  $\R^5$ for which a global generating family is
$$
X_1= \partial_1, \quad X_2 = \partial_2 + x_1 \partial_3 + x_1^2 \partial_5, \quad X_3 = \partial_ 4 + (x_1^k + x_2^k) \partial_5,
$$
with $k\in\N$,
and the canonical volume form $\omega=dx_1\wedge dx_2\wedge dx_3\wedge dx_4\wedge dx_5$.
Computing Lie brackets one obtains
$$
X_{12}=[X_1,X_2]=\partial_3+2x_1\partial_5,~~X_{13}=kx_1^{k-1}\partial_5, ~X_{23}=kx_2^{k-1}\partial_5,~X_{112}=2\partial_5.
$$
Hence at regular points the growth vector is $(3,5)$, $Q_\Reg=7$, and the singular set is $\Sing  = \{x_1 = x_2= 0\}$. At a  singular point $p$ the growth vector is $(3,4,5)$,  $Q(p)=8$ and the growth vector of the flag restricted to $\Sing$ is $(1,2,3)$, which gives $Q_\Sing=6$. Hence the Hausdorff dimension of $\R^5$ (endowed with the sub-Riemannian distance) is $7$ and the Hausdorff volume is $\volh=\ss^7$. At regular points there are two adapted bases $X_1,X_2,X_3,X_{12},X_{13}$ and $X_1,X_2,X_3,X_{12},X_{23}$, which implies that
$$
\nu(x_1,x_2,x_3,x_4,x_5)\asymp \sqrt{x_1^{2(k-1)}+x_2^{2(k-1)}}.
$$
Thus $\nu$ is homogeneous of order $k-1$ and $\varrho_{\min}(p,\Sing)=\varrho_{\max}(p,\Sing)=k-1$. In particular $\varrho_{\min}(p,\Sing)>Q(p)-Q_\Reg$ if and only if $k>2$. Applying Proposition~\ref{fin} and Proposition~\ref{finito}, $\frac{d \ss^7}{d \mu}$ is locally integrable around singular points if and only if $k\leq 2$.
\end{example}

We end by illustrating a case where  $\varrho_{\min}<Q(p)-Q_\Sing<\varrho_{\max}<\infty$
 and $\volh$ is locally integrable, showing that the condition on $\varrho_{\max}$ given in Proposition~\ref{finito} is not sharp.
 %On the contrary we conjecture that the bound of Theorem~\ref{fin} is sharp.

\begin{example}
\label{ex:last}
Consider the sub-Riemannian structure on $\R^4$ for which a global orthonormal frame is
$$
X_1(x)=\partial_1,~X_2(x)=\partial_2+x_1\partial_3+(x_1^2x_3^2-x_1x_2^2)\partial_4,
$$
and the canonical volume form $\omega=dx_1\wedge dx_2\wedge dx_3\wedge dx_4$. Computing Lie brackets one obtains
$$
X_{12}(x)=\partial_3+(2x_1x_3^2-x_2^2)\partial_4, \quad
X_{112}(x)=2x_3^2\partial_4,
\quad X_{212}(x)=(4x_1^2x_3-2x_2)\partial_4.
$$
Hence, the singular set is $\Sing=\{x\in\R^4\mid x_2=x_3=0\}$,  at regular points the growth vector is $(2,3,4)$ and $Q_\Reg=7$. At a singular point $p$  the growth vector is $(2,3,3,4)$, $Q(p)=8$, and the growth vector of the flag restricted to $\Sing$ is $(1,1,1,2)$, which gives $Q_\Sing=5$. Thus the Hausdorff dimension of $\R^4$ (endowed with the sub-Riemannian distance) is $7$ and the Hausdorff volume is $\volh=\ss^7$. At regular points there are two adapted bases $X_1,X_2,X_{12},X_{112}$ and $X_1,X_2,X_{12},X_{212}$, therefore
$$
\nu(x)\asymp \sqrt{\omega_1(x)^2+\omega_2(x)^2},
$$
where $\omega_1(x)=\det(X_1,X_2,X_{12},X_{112})=2x_3^2$ and $\omega_2(x)=\det(X_1,X_2,X_{12},X_{212})=3x_1^2x_3^2-2x_2$.
According to the notation in Proposition~\ref{tech}, coordinates  $y=(x_1,x_4)$ (of weights $1,4$ at $p=0$ respectively) parameterize the singular set, whereas coordinates $z=(x_2,x_3)$ (of weights $1,2$ at $p$ respectively) are transversal to $\Sing$.
Apply   the coordinate change  $(y,z)\mapsto(y,\tilde z)$  where $\tilde z=(\omega_2(x),x_3)=(\tilde x_2,x_3)$.
Then,
$$
\nu(y,\delta_\lambda \tilde z)^2=\nu(x_1,\lambda \tilde x_2,\lambda^2 x_3,x_4)=4\lambda^{8} x_3^4+\lambda^2\tilde x_2^2,
$$
and one easily gets $\varrho_{\min}(p,\Sing)=1$, $\varrho_{\max}(p,\Sing)=4$. Thus $\varrho_{\min}(p,\Sing) < Q(p)-Q_\Sing= 3 < \varrho_{\max}(p,\Sing)$ and neither the condition of Proposition~\ref{fin} nor the one of Proposition~\ref{finito} is satisfied.
However, taking a sufficiently small neighborhood $U$ of $p$ we have
$$
\ss^7\llcorner_\Reg (U) \asymp \int_{U}\frac{1}{\nu(x)}dx \asymp \int_{[-1,1]^4} \frac{1}{\sqrt{\omega_1(x)^2+\omega_2(x)^2}}dx \asymp \int_{[-1,1]^2} \frac{1}{\sqrt{\tilde x_2^2+x_3^4}}d\tilde x_2dx_3 < \infty.
$$
Thus $\ss^7=\ss^7\llcorner_\Reg$ is a Radon measure.
\end{example}

\appendix
\section{Appendix}
%\addcontentsline{toc}{section}{Appendix}
%
 We show here a technical result which is needed in Section~\ref{fghjk} to provide the local equivalent of $\frac{d\ss^{Q_N}_N}{d\mu}$.
\begin{proposition}
\label{le:unifbbox}
Let $M$ be oriented, $\omega$ be a volume form on $M$, $K$ be a compact subset of $M$, and $X_1,\dots, X_{m}$ be a generating family for $(\bD,g)$. There exist a constant $C>0$ and a function $\eta: K \to (0,+\infty)$ such that the following holds for every $p \in K$.

Let $\cX=(X_{I_{1}},\dots,X_{I_{n}})$ be a $n$-tuple
in $\arg \max \{ \omega_p(X_{I'_{1}}(p),\dots,X_{I'_{n}}(p)) \mid \ X_{I'_{1}},\dots,X_{I'_{n}} \hbox{ s.t. }
\sum_i |I'_i| = Q(p) \}$. Consider another $n$-tuple of vector fields $\cY=(Y_1,\dots, Y_n)$
such that, for $i=1, \dots,n$,
$$
Y_i \in \span \{ X_{I_j} \ : \ |I_j| \leq \ell(Y_i) \}, \quad \hbox{i.e.} \quad Y_i=\sum_{|I_j| \leq \ell(Y_i)}Y_i^j X_{I_j},% \hbox{ with $Y_i^j$ smooth functions},
$$
where all components $Y_i^j$ are smooth functions satisfying, for $s \in \N$,
\begin{equation}
\label{eq:normYs}
\| Y^{(s)} (p) \| \quad \hbox{and} \quad \| (Y^{(s)})^{-1} (p) \| \leq 2,
\end{equation}
$Y^{(s)} (p)$ being the matrix
$$
Y^{(s)} (p)=
\left\{
\begin{array}{ll}
\left( Y_i^j(p) \right)_{\{i,j \ \mid \ \ell(Y_i)=|I_j|=s\} } & \hbox{if }\{i,j \ \mid \ \ell(Y_i)=|I_j|=s\}\neq\emptyset\\%the matrix is non empty,}\\
\ 1 & \hbox{otherwise.}
\end{array} \right.
$$

Then, for any $\eps \leq \eta(p)$ there holds:
\begin{eqnarray}
\label{eq:bboxY}
 \mathrm{Box}_\cY^i(p, \eps/C) \subset B(p,\eps) \subset \mathrm{Box}_\cY^i(p,C\eps), \quad i=1,2,
\end{eqnarray}
where
\begin{eqnarray*}
 & &  \mathrm{Box}_\cY^1(p,\eps) = \left\{ \exp\left(\sum_i y_i Y_i\right)(p) \ \mid \ |y_i| \leq \eps^{\ell(Y_i)}, \ i=1, \dots, n \right\}, \\
& &    \mathrm{Box}_\cY^2(p,\eps) = \{ \exp(z_1 Y_1) \circ \cdots \circ \exp(z_n Y_n)(p) \ \mid \ |z_i| \leq \eps^{\ell(Y_i)}, \  i=1, \dots, n \}.
\end{eqnarray*}
\end{proposition}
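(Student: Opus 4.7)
The plan is to reduce to the uniform Ball--Box theorem of \cite{jean1,Jea-2014} applied to each fixed $n$-tuple $\cX$, and then to handle the freedom in choosing $\cY$ by a graded-perturbation estimate in privileged coordinates.

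First, Remark~\ref{re:Q<} gives $Q(p)\leq n^2$ on $K$, so the set $\mathfrak{T}$ of candidate $n$-tuples of multi-brackets $\cX=(X_{I_1},\dots,X_{I_n})$ with $\sum_i |I_i|\leq n^2$ is finite. For each $\cX\in\mathfrak{T}$ let $K_\cX\subset K$ be the closed subset of points at which $\cX$ realizes the maximization of the statement. Since $\mathfrak{T}$ is finite, it suffices to prove~\eqref{eq:bboxY} with $\cY=\cX$ for $p\in K_\cX$, uniformly in $p$, for each fixed $\cX$; the global constant $C$ is then the maximum over $\mathfrak{T}$ and $\eta(p)$ the pointwise minimum. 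For fixed $\cX$ this is exactly the setting of the uniform Ball--Box theorem \cite[Theorem~4.7]{Jea-2014}: its hypothesis---that $\cX$ is an adapted basis at each $p\in K_\cX$---follows from $\omega_p(\cX(p))\neq 0$ (being the positive maximum at $p$) and $\sum_i|I_i|=Q(p)$, together with the algebraic characterization of adapted bases recalled at the end of Section~\ref{se:srman}.

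Next, I would transfer the inclusions from $\cX$ to a general $\cY$ satisfying the span condition and~\eqref{eq:normYs}. In the privileged coordinates $\Phi_p^\cX:x\mapsto \exp(x_n X_{I_n})\circ\cdots\circ\exp(x_1 X_{I_1})(p)$, with anisotropic dilations $\delta_\lambda$ of weights $(|I_1|,\dots,|I_n|)$, the nilpotent approximation $\hat X_{I_j}$ of $X_{I_j}$ is homogeneous of weight $-|I_j|$. Writing $Y_i=\sum_{|I_j|\leq\ell(Y_i)}Y_i^j X_{I_j}$ and expanding $\delta_\lambda^* Y_i$ as $\lambda\to 0$, the terms with $|I_j|<\ell(Y_i)$ scale as $\lambda^{-|I_j|}$, which is subdominant to the leading $\lambda^{-\ell(Y_i)}$; hence the nilpotent approximation of $Y_i$ at $p$ is $\hat Y_i=\sum_{|I_j|=\ell(Y_i)}Y_i^j(p)\,\hat X_{I_j}$. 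The linear map from the frame $(\hat X_{I_j})$ to $(\hat Y_i)$ on the graded Lie algebra $\mathfrak{gr}_p(\bD)$ is therefore block-diagonal, with blocks precisely the matrices $Y^{(s)}(p)$; the uniform bound~\eqref{eq:normYs} makes this map uniformly bi-Lipschitz for the homogeneous pseudonorm, so the two types of nilpotent boxes associated with $\cX$ and $\cY$ are comparable up to a multiplicative constant $C'$ depending only on the bound $2$ and on the finite combinatorial data in $\mathfrak{T}$. Setting $C=\max_\cX C_\cX\cdot C'$ closes the argument.

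The main obstacle is upgrading this linear bi-Lipschitz comparison of nilpotent boxes to a comparison between the actual boxes $\mathrm{Box}^i_\cX(p,\eps)$ and $\mathrm{Box}^i_\cY(p,\eps)$, uniformly in $p\in K_\cX$ and in the smooth components $Y_i^j$ constrained only by~\eqref{eq:normYs}. This amounts to a graded Baker--Campbell--Hausdorff-type computation in privileged coordinates, verifying that the lower-length contributions $Y_i^j X_{I_j}$ with $|I_j|<\ell(Y_i)$ produce only subdominant corrections after dilation, and that the sum-of-exponentials box $\mathrm{Box}^1_\cY$ and the product-of-exponentials box $\mathrm{Box}^2_\cY$ remain comparable uniformly---this is precisely where the quantitative machinery underlying \cite[Theorem~4.7]{Jea-2014} must be reapplied on the perturbed frame $\cY$.
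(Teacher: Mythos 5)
Your overall strategy coincides with the paper's: first invoke the uniform Ball--Box theorem of \cite{jean1} and \cite[Theorem~4.7]{Jea-2014} for the argmax frame $\cX$ itself, then transfer the inclusions to a general admissible $\cY$ by a graded comparison in privileged coordinates. The first half is fine (the paper simply takes the case $\cY=\cX$, $i=2$ as known; your finite decomposition of $K$ into the sets $K_\cX$ is a harmless repackaging of the same input). The problem is that the second half --- which is the entire content of the proposition beyond the cited theorem --- is left as an acknowledged ``main obstacle'' rather than proved. Knowing that the nilpotent approximations satisfy $\widehat Y_i=\sum_{|I_j|=\ell(Y_i)}Y_i^j(p)\widehat X_{I_j}$ and that the block-diagonal matrices $Y^{(s)}(p)$ are uniformly invertible does not by itself compare the boxes: the exponential coordinates of first and second kind attached to $\cY$ are related to those attached to $\cX$ by a weighted-\emph{polynomial} (not linear) change of variables, and one must show that its homogeneous principal part is uniformly bi-Lipschitz for the pseudo-norm. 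This is exactly the step you defer, and it is where all the work lies.

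Concretely, the paper closes this gap as follows. Via \eqref{eq:d-hatd} the inclusions \eqref{eq:bboxY} are reduced to the two-sided pseudo-norm estimates \eqref{eq:hd_vs_psnorm} for the nilpotent distances in the $\cY$-coordinates; Lemma~\ref{le:change_coor} then converts this into the single estimate \eqref{eq:psnormes} on the weighted-homogeneous part $\widehat\psi$ of the change of coordinates from $\cY$-coordinates to $\cX$-coordinates. That estimate is obtained by an explicit Campbell--Hausdorff expansion (\eqref{eq:ch}--\eqref{eq:ch2}), whose key features are (a) the triangular structure of the resulting relations, which allows solving for $\widehat\psi_i$ degree by degree and also inverting the system, and (b) the uniform boundedness of all coefficients, which rests on two ingredients: the normalization \eqref{eq:normYs} on $Y^{(s)}(p)$ \emph{and} the bound $|X_I^i(p)|\leq 1$ for the coefficients expressing any bracket $\widehat X_I$ in the frame $\widehat X_{I_1},\dots,\widehat X_{I_n}$ --- the latter being a consequence of the argmax choice of $\cX$. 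Your proposal uses the argmax property only to assert that $\cX$ is adapted; it never exploits this determinant-ratio bound, without which the constant in the transfer step cannot be made uniform in $p\in K$. So the proposal identifies the right road but stops exactly at the point where the proof has to be done.
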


\begin{remark}
Notice that any $n$-tuples $\cX$ and $\cY$ satisfying the hypothesis of the proposition at a point $p$ form a basis adapted to the
flag \r{flagd} of the distribution at $p$. Also, $\cY=\cX$ satisfies the hypothesis and in this case the inclusions~\eqref{eq:bboxY}  for $i=2$ result from the uniform Ball-Box Theorem, \cite{jean1} and \cite[Theorem~4.7]{Jea-2014}.
\end{remark}

Let us introduce some notations.  Fix $p \in M$. Recall that the weights $w_i=w_i(p)$, $i=1,\dots,n$, are defined by setting $w_i=s$
if $\dim\bD^{s-1}_p < i \leq \dim\bD^{s}_p$.  The largest of these integers, i.e. $w_n(p)$, is an upper semi-continuous function of $p$, and hence it admits a maximum $\bar w \in \mathbb{N}$ on the compact set $K$. Thus $w_i(p) \leq \bar w$ for every $p\in K$ and $i=1,\dots,n$. Using these weights $w_i$ we also define the pseudo-norm $\| \cdot \|_p$ on $\R^n$ by
$$
 \|x\|_p = \max ( |x_1|^{1/w_1}, \dots ,|x_n|^{1/w_n}).
$$
Notice that any $n$-tuple of vector fields $\cY=(Y_1,\dots, Y_n)$ verifying the hypothesis of Proposition~\ref{le:unifbbox} satisfies, up to reordering, $\ell(Y_i) = w_i$, for every $i$. We will always suppose in this section that $\cY$ and $\cX$ have been ordered in that way. In particular $| I_i| = w_i$, for every $i$.

Let $x=(x_1,\dots,x_n)$  be a system of privileged coordinates at $p$ and let us denote by $\widehat{d}_p^x$  the distance of the nilpotent approximation at $p$ defined by means of the coordinates $x$. Then
\begin{equation}
\label{eq:d-hatd}
    d(p,q)= \widehat{d}_p^x(0,x(q)) + o(\widehat{d}_p^x(0,x(q))),
\end{equation}
where $x(q)$ are the coordinates of the point $q$ (see for instance \cite{bellaiche}).

Fix now a $n$-tuple $\cY=(Y_1,\dots, Y_n)$ verifying the hypothesis of Proposition~\ref{le:unifbbox} and denote by
$y$ and $z$  the coordinates associated with $\mathrm{Box}_\cY^1$ and $\mathrm{Box}_\cY^2$ respectively: $y=y(q)$ are the coordinates of the point $q$ satisfying
$$
q=\exp\left(\sum_i y_i Y_i\right)(p),
$$
and $z=z(q)$ are the coordinates of the point $q$ satisfying
$$
q=\exp(z_1 Y_1) \circ \cdots \circ \exp(z_n Y_n)(p).
$$
The inclusions~\eqref{eq:bboxY} are equivalent to
\begin{equation*}
%\label{eq:d_vs_psnorm}
\frac{1}{C} \|y(q)\|_p \leq d(p,q) \leq C \|y(q)\|_p \quad \hbox{and} \quad \frac{1}{C} \|z(q)\|_p \leq d(p,q) \leq C \|z(q)\|_p \quad \hbox{for $d(p,q)$  small enough}.
\end{equation*}
Since the coordinates $y$ and $z$ are privileged coordinates at $p$, using~\eqref{eq:d-hatd} we get that the inclusions~\eqref{eq:bboxY} are also equivalent to
\begin{equation}
\label{eq:hd_vs_psnorm}
\frac{1}{C} \|y\|_p \leq \widehat{d}_p^y(0,y) \leq C \|y\|_p \quad \hbox{and} \quad \frac{1}{C} \|z\|_p \leq \widehat{d}_p^z(0,z) \leq C \|z\|_p  \quad \hbox{for any } y,z \in \R^n.
\end{equation}

\begin{lemma}
\label{le:change_coor}
Let $x=(x_1,\dots,x_n)$ and $y=(y_1,\dots, y_n)$ be two systems of privileged coordinates at $p$. Assume that the change of coordinates formulas are
$$
x_i = \psi_i(y), \quad i=1,\dots,n,
$$
and let $\widehat{\psi}_i(y)$ be the homogeneous part of $\psi_i(y)$ of degree $w_i$ (that is, the sum of monomials of weighted degree $w_i$ in the Taylor expansion of $\psi_i(y)$). Then
%
%Denote by $\widehat{d}_p^x$ and $\widehat{d}_p^y$ respectively the distance of the nilpotent approximation defined by means of the coordinates $x$ and $y$. Then
$$
\widehat{d}_p^y (0,y) = \widehat{d}_p^x (0, \widehat{\psi}(y)),
$$
where $\widehat{\psi}(y) = (\widehat{\psi}_1(y), \dots, \widehat{\psi}_n(y))$.
\end{lemma}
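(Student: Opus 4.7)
The plan is to exploit the dilation-homogeneity of $\widehat{d}_p^x$ and $\widehat{d}_p^y$ together with the approximation property \eqref{eq:d-hatd}. The whole proof consists in applying \eqref{eq:d-hatd} to a point whose $y$-coordinates are $\delta_\lambda y_0$ and then letting $\lambda$ tend to $0$; the homogeneous map $\widehat{\psi}$ arises naturally as the limit of $\delta_{1/\lambda}\circ\psi\circ\delta_\lambda$ on each component.

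First I would record two elementary facts about $\widehat{\psi}$. Writing $\psi_i(y)=\widehat{\psi}_i(y)+R_i(y)$, where $R_i$ is a sum of monomials of weighted degree strictly greater than $w_i$, the dilation relations $\widehat{\psi}_i(\delta_\lambda y)=\lambda^{w_i}\widehat{\psi}_i(y)$ and $R_i(\delta_\lambda y)=o(\lambda^{w_i})$ as $\lambda\to 0^+$ hold uniformly for $y$ in any compact set. Consequently, for every fixed $y_0\in\R^n$,
\begin{equation*}
\delta_{1/\lambda}\bigl(\psi(\delta_\lambda y_0)\bigr)=\widehat{\psi}(y_0)+\delta_{1/\lambda}R(\delta_\lambda y_0)\xrightarrow[\lambda\to 0^+]{}\widehat{\psi}(y_0),
\end{equation*}
and by construction the pseudo-norm $\|\delta_{1/\lambda}R(\delta_\lambda y_0)\|_p$ tends to $0$.

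Next I would fix $y_0$ and let $q_\lambda$ be the point whose $y$-coordinates are $\delta_\lambda y_0$; its $x$-coordinates are $\psi(\delta_\lambda y_0)$. Applying \eqref{eq:d-hatd} in each coordinate system yields
\begin{equation*}
\widehat{d}_p^y(0,\delta_\lambda y_0)+o\bigl(\widehat{d}_p^y(0,\delta_\lambda y_0)\bigr)=d(p,q_\lambda)=\widehat{d}_p^x\bigl(0,\psi(\delta_\lambda y_0)\bigr)+o\bigl(\widehat{d}_p^x(0,\psi(\delta_\lambda y_0))\bigr).
\end{equation*}
The left-hand side equals $\lambda\widehat{d}_p^y(0,y_0)+o(\lambda)$ by homogeneity of $\widehat{d}_p^y$ with respect to $\delta_\lambda$. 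For the right-hand side I use homogeneity of $\widehat{d}_p^x$ and continuity: since $\delta_{1/\lambda}\psi(\delta_\lambda y_0)\to\widehat{\psi}(y_0)$ and $\widehat{d}_p^x(0,\cdot)$ is continuous (and homogeneous of degree $1$), one obtains
\begin{equation*}
\widehat{d}_p^x\bigl(0,\psi(\delta_\lambda y_0)\bigr)=\lambda\,\widehat{d}_p^x\bigl(0,\delta_{1/\lambda}\psi(\delta_\lambda y_0)\bigr)=\lambda\,\widehat{d}_p^x(0,\widehat{\psi}(y_0))+o(\lambda).
\end{equation*}
Dividing by $\lambda$ and passing to the limit $\lambda\to 0^+$ gives the announced equality.

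The main technical point will be the continuity argument for $\widehat{d}_p^x$ at $\widehat{\psi}(y_0)$: strictly, I need to know that $\widehat{d}_p^x(0,\cdot)$ is continuous on $\R^n$, which follows from the standard Ball-Box estimate $\widehat{d}_p^x(0,\cdot)\asymp\|\cdot\|_p$ together with the triangle inequality (applied to a small pseudo-norm perturbation). This is where one must make sure that the remainder $\delta_{1/\lambda}R(\delta_\lambda y_0)$, which is controlled only in the pseudo-norm, produces an $o(1)$ correction in $\widehat{d}_p^x$; this is the only nontrivial step, but it is immediate from the Ball-Box type equivalence between $\widehat{d}_p^x(0,\cdot)$ and $\|\cdot\|_p$ that is available in privileged coordinates.
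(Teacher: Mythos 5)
Your proof is correct and follows essentially the same route as the paper's: both rescale by the dilations, use the convergence of the rescaled distance to the nilpotent distance, and identify $\lim_{\lambda\to 0^+}\delta_{1/\lambda}\psi(\delta_\lambda y_0)=\widehat{\psi}(y_0)$. The only difference is cosmetic — you phrase the blow-up via the expansion \eqref{eq:d-hatd} rather than the limit formula $\widehat{d}_p^x(0,x)=\lim_{s\to0^+}\frac{1}{s}d(0,\delta_s x)$, and you spell out the continuity of $\widehat{d}_p^x(0,\cdot)$ that the paper leaves implicit.
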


\begin{proof}
This lemma may be seen as a consequence of~\cite[Proposition~5.20]{bellaiche}. We give here a direct proof.
%
%For $s\geq 0$, we denote by $\delta_s:\R^n \to \R^n$ the dilation defined by
%$$
%\delta_s x =(s^{w_1} x_1, \dots, s^{w_n} x_n).
%$$
Let us use the system of coordinates $x$ to identify a neighborhood of $p$ in $M$ with a neighborhood of $0$ in $\R^n$. We have
$$
\widehat{d}_p^x (0, x) = \lim_{s\to 0^+} \frac{1}{s} d(0,\delta_s x) \quad \hbox{and} \quad
\widehat{d}_p^y (0, y) = \lim_{s\to 0^+} \frac{1}{s} d(0,\psi(\delta_s y)),
$$
where as usual $\delta_s x =(s^{w_1} x_1, \dots, s^{w_n} x_n)$.

Hence $\widehat{d}_p^y (0, y) = \lim_{s\to 0^+} d(0,\delta_{1/s}\psi(\delta_s y))$. The conclusion follows from $\lim_{s\to 0^+} \delta_{1/s}\psi(\delta_s y) = \widehat{\psi}(y)$.
\end{proof}

\begin{proof}[Proof of Proposition~\ref{le:unifbbox}]
We already know that \eqref{eq:bboxY} holds with $\cY=\cX$ and $i=2$.
Let $\cY$ be a $n$-tuple of vector fields satisfying the hypothesis of Proposition~\ref{le:unifbbox}. We will prove that \eqref{eq:bboxY} holds for $\cY$ with $i=1$, the proof with $i=2$ being similar.

Let $x$ and $x'$ be the coordinates associated with $\mathrm{Box}_\cY^1$ and $\mathrm{Box}_\cX^2$ respectively, and assume that the change of coordinate formulas are $x'=\psi(x)$. From \eqref{eq:hd_vs_psnorm} and Lemma~\ref{le:change_coor}, it suffices to show that there exists a constant $C'>0$, independent of $p$ and of $\cX$, such that
\begin{equation}
\label{eq:psnormes}
\frac{1}{C'} \|x\|_p \leq \|\widehat{\psi}(x)\|_p \leq C' \|x\|_p \quad \hbox{for any } x \in \R^n.
\end{equation}

By definition of $\psi(x)$, there holds
$$
\exp\left(-\sum_i x_i Y_i\right) \circ \exp(\psi_1(x) X_{I_1}) \circ \cdots \circ \exp(\psi_n(x) X_{I_n})(p)=p.
$$
Remind that $Y_i=\sum_{w_j\leq w_i}Y_i^j  X_{I_j}$. Taking the homogeneous terms of weighted degree zero in the equality above, we obtain
$$
\exp\left(-\sum_{j=1}^n \left(\sum_{\{i \, : \, %\mathrm{s.t.}
 w_i=w_j\}} x_i Y_i^j(p)\right) \widehat X_{I_j}\right) \circ \exp(\widehat \psi_1(x) \widehat X_{I_1}) \circ \cdots \circ \exp(\widehat \psi_n(x) \widehat X_{I_n})(p)=p.
$$
Set $y_j = \sum_{\{i \, : \, %\mathrm{s.t.}
 w_i=w_j\}} x_i Y_i^j(p)$. The Campbell-Hausdorff formula allows to expand the product above as follows,
\begin{equation}
\label{eq:ch}
\sum_{j=1}^n (\widehat \psi_j(x) - y_j) \widehat X_{I_j}(p) +
\sum_{|I| \leq w_n} \sum_{\alpha,\beta} c_{\alpha,\beta}^I y_{\alpha_1} \cdots y_{\alpha_s} \widehat \psi_{\beta_1} (x) \cdots \widehat \psi_{\beta_r}(x) \widehat X_{I}(p)= 0,
\end{equation}
where the second sum is taken over all the multi-indices $\alpha = (\alpha_1, \dots,\alpha_s)$ and $\beta = (\beta_1, \dots,\beta_r)$ such that $s+r>1$ and $w(\alpha) + w(\beta) =  |I|$ (we use the notation $w(\alpha)$ for $w_{\alpha_1} + \cdots + w_{\alpha_s}$). Note that this implies that all $w_{\alpha_i} $ and $w_{\beta_i}$ are smaller than $|I|$. The structural constants $c_{\alpha,\beta}^I$ do not depend on $p$ but only on the coefficients of the Campbell-Hausdorff series. More precisely, there exists a constant $C_{\bar w}$ depending only on the maximum value $\bar w$ of $w_n$ on $K$ such that $|c_{\alpha,\beta}^I| \leq C_{\bar w}$ for every $I, \alpha, \beta$ appearing in~\eqref{eq:ch} at any point $p\in K$.

Now, for every multi-index $I$ there holds $\widehat X_{I}(p) = \sum_{|I_i| = |I|} X_{I}^i(p) \widehat X_{I_i}(p)$ where
$$
X_{I}^i(p) = \frac{\omega(X_{I_{1}},\dots,X_{I_{i-1}},X_I, X_{I_{i+1}},\dots, X_{I_{n}})(p)}{\omega(X_{I_{1}},\dots,X_{I_{n}})(p)}.
$$
The choice of the family $\cX=(X_{I_{1}},\dots,X_{I_{n}})$ ensures that, for every $I$ and $i$ such that $|I_i| = |I|$ we have $|X_{I}^i(p)| \leq 1$.

Plugging the expression of $\widehat X_{I}(p)$ into~\eqref{eq:ch}, we obtain
\begin{equation}
\label{eq:ch2}
\sum_{i=1}^n \left( \widehat \psi_i(x) - y_i+
\sum_{|I|=w_i} \sum_{\alpha,\beta} c_{\alpha,\beta}^I X_{I}^i(p) y_{\alpha_1} \cdots y_{\alpha_s} \widehat \psi_{\beta_1} (x) \cdots \widehat \psi_{\beta_r}(x) \right) \widehat X_{I_i}(p) = 0.
\end{equation}
Note that these equalities are in triangular form: all the indices $\alpha_j$ and $\beta_j$ appearing in the $i$th equality are smaller than $i$. Since moreover all the coefficients in the above equality are bounded independently of $p$, we get the following change of coordinates formulas, for $i=1,\dots,n$,
$$
\widehat \psi_i(x) = \sum_{\alpha \ \mathrm{s.t.} \ w(\alpha)=w_i} a'_\alpha (p) y_{\alpha_1} \cdots y_{\alpha_s} = \sum_{\alpha \ \mathrm{s.t.} \ w(\alpha)=w_i} a_\alpha (p) x_{\alpha_1} \cdots x_{\alpha_s},
$$
where $a_\alpha (p) = \sum_{w(\beta)=w_i} a'_\beta (p) Y_{\beta_1}^{\alpha_1}(p) \cdots Y_{\beta_s}^{\alpha_s}(p)$. The coefficients $a'_\alpha (p)$ are bounded independently of $p$, therefore it follows from hypothesis~\eqref{eq:normYs} that it is also the case for $a_\alpha (p)$: there exists a constant $C'_{\bar w}$ depending only on $\bar w$ such that $|a_\alpha (p)| \leq C'_{\bar w}$ for any $p\in K$ and any $\alpha$ such that $w(\alpha) \leq \bar w$.

Let $N$ be the number of multi-indices $\alpha$ such that $w(\alpha) \leq \bar w$ and set $C'=n(NC'_{\bar w})^{1/\bar w}$. There holds
$$
\sup \{ \| \widehat \psi(x) \|_p  \ | \ p \in K, \ \|x\|_p = 1 \} \leq C',
$$
and since every $\widehat \psi_i(x)$ is a  weighted homogeneous polynomial of degree $w_i$, we obtain
\begin{equation}
\label{eq:right}
\| \widehat \psi(x) \|_p \leq C' \|x\|_p.
\end{equation}

Now observe that hypothesis~\eqref{eq:normYs} implies that the vectors  $\widehat Y_i(p)=\sum_{w_j= w_i}Y_i^j(p) \widehat X_{I_j}(p)$, $i=1, \dots, n$, form a basis of $T_pM$ and that the change-of-basis matrix $M(p)$ between $(\widehat X_{I_1}(p),\dots,\widehat X_{I_n}(p))$ and $(\widehat Y_1(p),\dots,\widehat Y_n(p))$ has coefficients $M_{ij}(p)$ that are bounded independently of $p$. Identity~\eqref{eq:ch2} can then be rewritten as
$$
\sum_{i=1}^n \left( \sum_{w_j=w_i} M_{ji}(p) \widehat \psi_j(x) - x_i +
\sum_{w_j=w_i} \sum_{|I|=w_i} \sum_{\alpha,\beta} c_{\alpha,\beta}^I M_{ji}(p) X_{I}^j(p) x_{\alpha_1} \cdots x_{\alpha_s} \widehat \psi_{\beta_1} (x) \cdots \widehat \psi_{\beta_r}(x) \right) \widehat Y_i(p) = 0.
$$
Using the above reasoning in which we exchange the role of $x$ and $\widehat \psi$ coordinates, we obtain, up to enlarging the constant $C'$,
$$
\| x \|_p \leq C' \|\widehat \psi(x)\|_p.
$$
This inequality together with~\eqref{eq:right} gives exactly~\eqref{eq:psnormes}, which ends the proof.
\end{proof}

%\bibliographystyle{abbrv}
%\bibliography{biblio_volumesV7}
%

\end{document}